\title{Model completion of scaled lattices and co-Heyting
algebras of $p$\--adic semi-algebraic sets%
\thanks{%
  Keywords: model-theory, p-adic, scaled lattice, Heyting algebra,
  quantifier elimination, decidability, model-completion, uniform
  interpolant.\hfill\break 
  MSC classes: 03C10, 06D20, 06D99.}}
\author{Luck Darni\`ere}
\newtheorem{lemma}[subsection]{Lemma}
\newtheorem{proposition}[subsection]{Proposition}
\newtheorem{theorem}[subsection]{Theorem}
\newtheorem{corollary}[subsection]{Corollary}
\newtheorem{fact}[subsection]{Fact}
\newtheorem{theoremAlph}{Theorem}
\newenvironment{remark}%
   {\refstepcounter{subsection}%
        \medbreak\noindent{\bf Remark \thesubsection\space}}%
   {\par\medbreak}%
\newenvironment{example}%
   {\refstepcounter{subsection}%
        \medbreak\noindent{\bf Example \thesubsection\space}}%
   {\par\medbreak}%
\newenvironment{proof}%
   {\medbreak\noindent{\it Proof:\space}}%
   {\par\noindent\vrule height 5pt width 5pt depth 0pt\smallbreak}%
\newcommand{\df}{\bf}
\renewcommand{\cal}{\mathcal}
\let\sauvegardetiret=\-
\renewcommand{\-}[1]{\ifx#1-\penalty10000\hbox{-\relax}\penalty10000\else\sauvegardetiret#1\fi}
\newcommand{\Sp}{{\rm Spec}}
\newcommand{\tq}{\,|\,}
\newcommand{\nin}{\not\in}
\newcommand{\vect}{\overrightarrow}
\newcommand{\Lssi}{\;\Longleftrightarrow\;}   
\newcommand{\donc}{\Rightarrow}          
\newcommand{\Ldonc}{\;\Longrightarrow\;} 
\newcommand{\NN}{{\mathbf N}}
\newcommand{\ZZ}{{\mathbf Z}}
\newcommand{\QQ}{{\mathbf Q}}
\newcommand{\RR}{{\mathbf R}}
\newcommand{\p}{{\rm\bf p}}
\newcommand{\cC}{{\cal C}}
\newcommand{\cD}{{\cal D}}
\newcommand{\cI}{{\cal I}}
\newcommand{\cK}{{\cal K}}
\newcommand{\cL}{{\cal L}}
\newcommand{\cN}{{\cal N}}
\newcommand{\cP}{{\cal P}}
\newcommand{\cU}{{\cal U}}
\newcommand{\cV}{{\cal V}}
\newcommand{\cY}{{\cal Y}}
\newcommand{\UN}{{\mathbf 1}}
\newcommand{\ZERO}{{\mathbf 0}}
\newcommand{\meet}{\land}
\newcommand{\join}{\lor}
\newcommand{\mmeet}{\mathop{\meet\mskip-6mu\relax\meet}}
\newcommand{\jjoin}{\mathop{\join\mskip-6mu\relax\join}}
\newcommand{\conj}{\bigwedge}
\newcommand{\disj}{\bigvee}
\newcommand{\cconj}{\mathop{\bigwedge\mskip-9mu\relax\bigwedge}}
\newcommand{\ddisj}{\mathop{\bigvee\mskip-9mu\relax\bigvee}}
\DeclareMathOperator{\lzar}{L_{Zar}}
\DeclareMathOperator{\llin}{L_{lin}}
\DeclareMathOperator{\ldef}{L_{def}}
\DeclareMathOperator{\lalin}{L_{lin}^{At}}
\DeclareMathOperator{\card}{Card}
\newcommand{\gen}[1]{\langle{#1}\rangle}
\newcommand{\TC}{\mathrm{TC}}
\newcommand{\SC}{\mathrm{SC}}
\DeclareMathOperator{\pc}{C}
\DeclareMathOperator{\szar}{SC_{Zar}}
\DeclareMathOperator{\slin}{SC_{lin}}
\DeclareMathOperator{\sdef}{SC_{def}}
\DeclareMathOperator{\sazar}{ASC_{Zar}}
\DeclareMathOperator{\salin}{ASC_{lin}}
\DeclareMathOperator{\sadef}{ASC_{def}}
\DeclareMathOperator{\AT}{At}
\DeclareMathOperator{\asc}{asc}
\DeclareMathOperator{\scdim}{sc-dim}
\newcommand{\llat}{{\cal L}_\mathrm{lat}}
\newcommand{\ltc}{{\cal L}_\mathrm{TC}}
\newcommand{\lsc}{{\cal L}_\mathrm{SC}}
\newcommand{\lasc}{{\cal L}_\mathrm{ASC}}
\let\olddownarrow=\downarrow
\renewcommand{\downarrow}{{\olddownarrow}}
\let\olduparrow=\uparrow
\renewcommand{\uparrow}{{\olduparrow}}
\begin{document}

\maketitle

\begin{abstract}
  Let $p$ be prime number, $K$ be a $p$\--adically closed field, $X\subseteq
  K^m$ a semi-algebraic set defined over $K$ and $L(X)$ the lattice of
  semi-algebraic subsets of $X$ which are closed in $X$. We prove that
  the complete theory of $L(X)$ eliminates quantifiers in a
  certain language $\lasc$, the $\lasc$\--structure on $L(X)$ being an
  extension by definition of the lattice structure. Moreover it is decidable,
  contrary to what happens over a real closed field for $m>1$. We classify these
  $\lasc$\--structures up to elementary equivalence, and get in
  particular that the complete theory of $L(K^m)$ only depends on $m$,
  not on $K$ nor even on $p$. As an application we obtain a
  classification of semi-algebraic sets over countable $p$\--adically
  closed fields up to so-called ``pre-algebraic'' homeomorphisms. 
\end{abstract}

\section{Introduction}
\label{se:introduction}

This paper
explores the model-theory of various classes of lattices coming from
algebraic geometry, real geometry or $p$\--adic geometry, with special
emphasis on the $p$\--adic case. We obtain model-completion and
decidability results for some of them. Before entering in technical
details let us present the main motivations for this, coming from
geometry and model-theory of course, but also from proof theory and
non-classical logics. 

Given an expansion of a topological field $K$ and a
definable\footnote{We assume the reader to be familiar with basic
  notions from model theory, in particular definable sets and
  functions. In simplest cases ``definable'' boils down to
``semi-algebraic'' over the field $\RR$ of real numbers, or the field
$\QQ_p$ of $p$\--adic numbers.} set $X\subseteq K^n$, we consider the lattice
$L(X)$ of all definable subsets of $X$ which are closed in $X$, and the
ring $\cC(X)$ of all continuous definable functions from $X$ to $K$.
These rings are central objects nowadays in functional analysis,
topology and geometry. To name an example, they are rings of sections
for the sheaf of continuous (say, real valued) functions on a
topological space and as such play the algebraic part in the study of
topological (Hausdorff) spaces. In most cases $L(X)$ is interpretable
in $\cC(X)$, and the prime filter spectrum of $L(X)$ is homeomorphic
to the prime ideal spectrum of $\cC(X)$. Thus $L(X)$ is a first-order
structure interpretable in $\cC(X)$, which captures all the
topological (hence second-order) information on the spectrum of the
ring $\cC(X)$. For the real field $\RR$ for example, it is known since
\cite{grze-1951} that $L(\RR^n)$ is undecidable for every $n\geq2$, hence
can be held liable for the undecidability of $\cC(\RR^n)$. On the
contrary $L(\RR)$ is decidable, and so is the lattice of all closed
subsets of the real line \cite{rabi-1969}. Recently this has been
strengthen and widely generalised in \cite{tres-2017}. However these
undecidability results for $L(X)$ strongly depend on the existence of
irreducible or connected components, hence do not apply to the
$p$\--adic case. But even in that case it is proved in
\cite{darn-tres-2018-tmp} that $\cC(\QQ_p^n)$ is undecidable, this time
for every $n\geq1$. On the contrary, our main result implies that for
every $n\geq1$:
\begin{itemize}
  \item
    $L(\QQ_p^n)$ is decidable, and;
  \item 
    The theory of $L(\QQ_p^n)$ eliminates the quantifier in a natural
    expansion by definition of the lattice language.
\end{itemize}

In another direction, the model-theory of these geometric lattices
$L(X)$ is tightly connected to the existence of uniform interpolants
for propositional calculus in certain intermediate\footnote{An
  intermediate logic is logic which stands between classical and
intuitionist logic.} and modal logics. Indeed, thanks to the
one-to-one correspondence between intermediate logics and varieties of
Heyting algebras, the existence of uniform interpolants for a logic
$\cL$ can be rephrased, {\it mutatis mutandis}, as the existence of a
model-completion for the theory of the corresponding variety $\cV(\cL)$
(see \cite{ghil-zawa-1997}). As lattices of closed sets, all our
lattices $L(X)$ are co-Heyting algebras, that is Heyting algebras with
the order reversed. Moreover, their structure is mostly determined by
the {\em geometry} of $X$. This geometric intuition coming from $X$
was essential to our model-completion results, with natural
axiomatisations, for certain theories of (expansions of) co-Heyting
algebras (theorems~\ref{th:intro-d-sub-mod-comp} and
\ref{th:intro-LQp-decid} below). See also 
\cite{cara-ghil-2017}, \cite{darn-junk-2018} for related results. 
\\

Now we are going to present our results in more detail. They are based
on a careful study of certain expansions of lattices, all inspired by
the  geometric examples of lattices of closed sets over an
$o$\--minimal, $P$\--minimal or $C$\--minimal expansion of a field
$K$. More general structures will be considered in the appendix
section~\ref{se:appendix}, where precise definitions are given of what
we call here ``tame'' topological structures. The point is that there
a good dimension theory for definable sets over such structures. 

\begin{example}\label{ex:Ldef}
  Let $\cK=(K,\dots)$ be a tame topological structure.
  For every definable sets $A,B\subseteq X\subseteq K^m$ let $A- B=\overline{A\setminus B}\cap X$
  where the overline stands for the topological closure. For every $a\in
  A$, the {\df local dimension} of $A$ at $a$ is the maximum of the
  dimensions of the definable neighborhoods of $a$ in $A$, and $A$ is
  called {\df pure dimensional} if it has the same local dimension at
  every point. For every non-negative integer $i$ let
  \begin{displaymath}
    \pc^i(A)=\overline{\{a\in A\tq \dim(A,a)=i\}}\cap A.
  \end{displaymath}
  This is a definable subset of $A$, closed in $A$, called the {\df
  $i$\--pure component} of $A$. We call $\ldef(X)$ the lattice of all
  the definable subsets of $X$ which are closed in $X$, enriched with
  the above functions ``$-$'' and $\pc^i$ for every $i$.
  This is a typical example (Proposition~\ref{pr:geom-scaled}) of what
  we are going to call a $d$\--scaled lattice.
\end{example}

Let $\llat=\{\ZERO,\UN,\join,\meet\}$ be the language of lattices, and
$\lsc=\llat\cup\{-,(C^i)_{i\geq0}\}$ be its expansion by the above function
symbols. Finally let $\sdef(\cK,d)$ be the class of the
$\lsc$\--structures $\ldef(X)$ of Example~\ref{ex:Ldef}, for all the
sets $X$ of dimension at most $d$ definable over $\cK$. A similar
construction can be done over a pure field $K$, with the Zariski
topology on $K^m$. We let $\szar(K,d)$ denote the corresponding class
of $\lsc$\--structures. Surprisingly enough, we prove that in most
cases the universal theory of $\sdef(\cK,d)$ (resp. $\szar(K,d)$) does
not depend on $\cK$ (resp. $K$)!

\begin{theoremAlph}\label{th:intro-univ-theory-Td}
  Given any non-negative integer $d$, the universal theories of
  $\sdef(\cK,d)$ (resp. $\szar(K,d)$) in the language $\lsc$ are
  \emph{the same} for every tame expansion $\cK$ of a topological
  field $K$  (resp. for every infinite field $K$).
\end{theoremAlph}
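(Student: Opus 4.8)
The plan is to use the standard model\--theoretic principle that the universal theory of a class of $\lsc$\--structures is completely determined by the isomorphism types of the finitely generated substructures occurring in its members: a universal sentence fails in the class if and only if some finitely generated substructure of some member witnesses the failure. Thus, to prove that the universal theory of $\sdef(\cK,d)$ does not depend on $\cK$, it suffices to show that the class of finitely generated $\lsc$\--substructures occurring in $\sdef(\cK,d)$ is the same for every tame $\cK$ (and similarly for $\szar(K,d)$ and every infinite $K$). I would achieve this through two steps: (a)~every finitely generated substructure is an object of a parameter\--free combinatorial class, namely the finite $d$\--scaled lattices; and (b)~every finite $d$\--scaled lattice is realised as such a substructure over \emph{each} admissible parameter. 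Together these identify the realisable class with the whole class of finite $d$\--scaled lattices, independently of $\cK$ (resp.\ $K$).

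For step (a), I would establish that every finitely generated $\lsc$\--substructure of $\ldef(X)$ is finite, by a stratification argument. Given generators $A_1,\dots,A_n$, tameness yields a finite partition of $X$ into definable cells refining the $A_j$, which can be arranged to satisfy the frontier condition (the closure of each stratum is a union of strata) and to have constant local dimension on each stratum. Then the finite lattice of all unions of strata contains $A_1,\dots,A_n$, is closed under $\join$ and $\meet$, and --- because closures, the co-Heyting difference $A-B=\overline{A\setminus B}\cap X$, and each pure component $\pc^i$ send unions of strata to unions of strata --- is closed under every operation of $\lsc$. Hence the generated substructure is finite, and its isomorphism type is recorded by finitely many combinatorial invariants: the underlying finite lattice, the local\--dimension label of each join\--irreducible element, and the tabulated effect of $-$ and of each $\pc^i$. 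None of these refers to $\cK$, and this is precisely the data of a finite $d$\--scaled lattice. For $\szar(K,d)$ the same argument runs with a Noetherian stratification of the ambient variety into locally closed pieces satisfying the frontier condition.

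Step (b), the geometric realisation, is the main obstacle. Given an abstract finite $d$\--scaled lattice, one must construct, over any prescribed tame $\cK$ (resp.\ any infinite field $K$ with its Zariski topology), a definable set $X'$ of dimension at most $d$ together with closed definable subsets whose induced $\lsc$\--structure is isomorphic to the prescribed object. The available building blocks are uniform across parameters: points, and for each $i\le d$ an $i$\--dimensional affine piece (a product of coordinate axes in the Zariski case, a box in the tame case), assembled by finite unions and products to match the prescribed incidence pattern and dimension labelling. The delicate point is that the assembly must reproduce the prescribed data \emph{exactly} under all operations at once: the closures, the differences $A-B$, and the pure components $\pc^i$ of the constructed sets must realise the tabulated combinatorial effect. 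This forces care in deciding which cells share frontier and which are glued transversally, and the genuine content of the theorem is that the invariants extracted in (a) are not merely necessary but also \emph{sufficient} for realisability --- that no hidden geometric constraint can make a finite $d$\--scaled lattice realisable over one parameter but not another. Only the finitely many dimensions $0,\dots,d$ and the uniform good behaviour of $\dim$ guaranteed by tameness enter, which is exactly why the outcome is parameter\--independent.

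Granting the combinatorial description (a) and the uniform representation (b), the class of finitely generated substructures occurring over any admissible parameter equals the class of all finite $d$\--scaled lattices, hence is the same for every $\cK$; by the opening observation the universal theories of the $\sdef(\cK,d)$ coincide. Running the identical argument on the Zariski families $\szar(K,d)$ over infinite fields yields the parenthetical statement, completing the proof.
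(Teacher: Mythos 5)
Your overall architecture matches the paper's: reduce the universal theory to the class of finitely generated substructures, show these are finite objects of a parameter-free combinatorial class, and show every object of that class is realised over every admissible parameter. But there are genuine gaps. The decisive one is step (b): you correctly identify the uniform geometric realisation as ``the main obstacle'' and then do not carry it out --- you describe the building blocks and the difficulty, and assert that ``no hidden geometric constraint'' obstructs realisability, which is precisely the statement to be proved. This is the heart of the paper's argument: it is done by an induction on the number of $\join$\--irreducible elements (Proposition~\ref{pr:linear-repres}), adding one special linear variety at a time via Proposition~\ref{pr:treillis-des-lineaires-speciaux}, and the verification that the resulting map respects $-$ and every $\pc^i$ ``all at once'' is exactly the content of the embedding criterion of Proposition~\ref{pr:CNS-de-lsc-plongement} (an $\llat$\--embedding sending $\join$\--irreducibles to sc\--pure elements of the same sc\--dimension is an $\lsc$\--embedding). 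Without some such lemma your ``tabulated effect'' requirement cannot be discharged.

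Two further problems. First, your step (a) invokes a definable stratification with the frontier condition and constant local dimension; the paper's notion of tameness only assumes the dimension axioms (Dim1)--(Dim4) and provides no cell decomposition or frontier condition in general (e.g.\ for visceral structures), so your finiteness argument for $\ldef(X)$ is not available in the stated generality. The paper sidesteps this by proving local finiteness \emph{algebraically} from the universal axioms of $d$\--subscaled lattices (Theorem~\ref{th:TCS-treillis-type-fini}), which the geometric lattices satisfy by Proposition~\ref{pr:geom-scaled}. Second, you identify the combinatorial class as the finite $d$\--\emph{scaled} lattices; this is not the right class. An $\lsc$\--substructure of $\ldef(X)$ satisfies the universal axioms $\rm SS_1^d$--$\rm SS_6$ but in general not $\rm SC_0$ ($\scdim a=\dim a$), since lattice-theoretic dimension is not preserved under $\lsc$\--embeddings: a single irreducible curve generates a substructure in which it is an atom, hence of dimension $0$ but sc\--dimension $1$. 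The correct invariant class is that of finite $d$\--\emph{subscaled} lattices (equivalently, the data of Example~\ref{ex:Lnoeth}), and the common universal theory is the theory $T_d$ of $d$\--subscaled lattices; conflating the two obscures why an explicit universal axiomatisation, rather than just a class of finite structures, is what makes the parameter-independence transparent.
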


In order to prove this we give in
Section~\ref{se:notation-definitions} an explicit list of universal
axioms for a theory $T_d$ in $\lsc$, the models of which we call
$d$\--subscaled lattices. All the examples given above are
$d$\--scaled lattices, a natural subclass of $d$\--subscaled lattices
(the class of $d$\--scaled lattices is elementary but not universal).
After some technical preliminaries in
Section~\ref{se:embedd-subsc-latt} we prove in
Section~\ref{se:local-finiteness} that every finitely generated
$d$\--subscaled lattice is finite. Combining this with a linear
representation for finite $d$\--subscaled lattices
(Proposition~\ref{pr:linear-repres}) and with the model-theoretic
compactness theorem, we then prove in Section~\ref{se:lin-rep} that,
whatever is $K$ or $\cK$ in Example~\ref{ex:Ldef}, the
theory of $d$\--subscaled lattices is exactly the universal theory of
$\sdef(\cK,d)$ and of $\szar(K,d)$ (Theorem~\ref{pr:linear-repres}).

A detailed study of the minimal finitely generated extensions of
finite $d$\--subscaled lattices, achieved in
Section~\ref{se:minimal-extensions}, leads us in
Section~\ref{se:model-completion} to the next result
(Theorem~\ref{th:model-completion} and
Corollary~\ref{cor:completions}).

\begin{theoremAlph}\label{th:intro-d-sub-mod-comp}
  For every non-negative integer $d$, the theory of $d$\--subscaled
  lattices admits a model-completion $\bar T_d$ which is finitely
  axiomatizable and $\aleph_0$\--categorical. Moreover, $\bar T_d$ has
  finitely many prime models, hence it is decidable as well as all its
  completions.
\end{theoremAlph}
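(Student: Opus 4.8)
The plan is to realize $\bar T_d$ as the model companion of $T_d$ and then to upgrade it to a model completion by verifying that $T_d$ has the amalgamation property. Since $T_d$ is universal and locally finite---every finitely generated $d$\--subscaled lattice is finite (Section~\ref{se:local-finiteness})---and every model is the directed union of its finite substructures, amalgamation reduces to the finite case. So I would show: given embeddings $A\hookrightarrow B$ and $A\hookrightarrow C$ of finite $d$\--subscaled lattices, there is a common extension. Using the linear representation of finite $d$\--subscaled lattices (Proposition~\ref{pr:linear-repres}) together with the description of minimal finitely generated extensions from Section~\ref{se:minimal-extensions}, I would decompose $B$ (and likewise $C$) over $A$ into a finite tower of minimal extensions and amalgamate one step at a time, each step being a single minimal amalgam that the structure theory makes available.

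Next I would pin down the existentially closed models by an extension scheme and, crucially, show that it is finitely axiomatizable. Relying on the classification of minimal finitely generated extensions (Section~\ref{se:minimal-extensions}), a model $M\models T_d$ is existentially closed exactly when every minimal extension of a finite substructure $A\subseteq M$ that is compatible with $M$ already embeds into $M$ over $A$. The main obstacle is to collapse this \emph{a priori} infinite scheme into finitely many first-order axioms. Here I would use that, up to isomorphism, a minimal extension is governed by bounded data: a bounded number of new generators interacting only with a bounded-size configuration of the base, all bounded in terms of $d$. Each instance then becomes a sentence of the form $\forall\bar x\,\bigl(\varphi(\bar x)\to\exists\bar y\,\psi(\bar x,\bar y)\bigr)$ of bounded quantifier complexity, and only finitely many of them are needed. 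Adjoining these to the axioms of $T_d$---whose language is effectively finite, the operations $C^i$ being trivial for $i>d$---yields a finitely axiomatizable theory $\bar T_d$ whose models are precisely the existentially closed models of $T_d$. Thus $\bar T_d$ is the model companion; since $T_d$ has amalgamation it is in fact the model completion; and since $T_d$ is universal, $\bar T_d$ admits quantifier elimination.

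Quantifier elimination then yields the remaining claims. For $\aleph_0$\--categoricity I would apply the Ryll-Nardzewski theorem: by quantifier elimination a complete $n$\--type is determined by the isomorphism type of the substructure generated by an $n$\--tuple, which is finite of size bounded in terms of $n$ (Section~\ref{se:local-finiteness}), hence of only finitely many isomorphism types; so there are finitely many $n$\--types for every $n$. For the completions, quantifier elimination shows that a completion is determined by the quantifier-free theory of the $\emptyset$\--generated substructure, that is by the dimension profile of the top element recorded by the values $C^i(\UN)$ with $i\in\{0,\dots,d\}$; as there are only finitely many such profiles there are finitely many completions, each $\aleph_0$\--categorical and hence carrying a unique countable, prime model---which accounts for the finitely many prime models. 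Decidability follows immediately: $\bar T_d$ is finitely (in particular recursively) axiomatized, each of its finitely many completions is recursively axiomatizable and $\aleph_0$\--categorical hence decidable, and a theory with finitely many decidable completions is itself decidable.
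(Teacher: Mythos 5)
Your overall skeleton (model companion, amalgamation by local finiteness, quantifier elimination, Ryll-Nardzewski, finitely many completions indexed by $\emptyset$\--generated substructures, decidability) is the right frame, and the local-finiteness and minimal-extension reductions you invoke are indeed the tools the paper uses. But there is a genuine gap at exactly the point you yourself flag as ``the main obstacle'': the passage from the extension scheme to a \emph{finite} first-order axiomatization. Your justification --- that a minimal extension is governed by bounded data, so each instance becomes a $\forall\bar x(\varphi\to\exists\bar y\,\psi)$ sentence and ``only finitely many of them are needed'' --- does not go through as stated. An SC\--signature $(g,\{h_1,h_2\},q)$ is indeed a bounded tuple, but the conditions making it a signature \emph{in a finite substructure $L_0$} (that $g$ be $\join$\--irreducible in $L_0$, that $g^-$ be its predecessor computed in $L_0$), and the conditions for a candidate tuple $(x_1,x_2)$ in $M$ to generate a primitive extension with that signature \emph{over $L_0$}, all refer to the ambient finite substructure, not to a bounded configuration; quantifying over all finite substructures is not first order. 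The paper's entire Section~\ref{se:model-completion} is devoted to resolving this: it replaces the scheme by two intrinsic $\forall\exists$ axioms (Catenarity and Splitting), proves that existentially closed models satisfy them (Catenarity via the linear representation of Proposition~\ref{pr:linear-repres} plus compactness, Splitting via the nontrivial embedding construction of Lemma~\ref{le:embed-split}, an induction on sc\--dimension), and then proves conversely, by the two-case analysis in Theorem~\ref{th:model-completion}, that these two axioms suffice to realize every SC\--signature over every finite substructure. None of this is supplied or replaced by your boundedness remark, and the same missing construction is what would be needed to carry out the ``single minimal amalgam'' step of your amalgamation argument.

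Two smaller inaccuracies: a completion of $\bar T_d$ is determined by the isomorphism type of the full prime ($\emptyset$\--generated) $\lsc$\--substructure, which carries more information than the set of $i$ with $\pc^i(\UN)\neq\ZERO$ (it records, e.g., the structure generated by the elements $\pc^i(\UN)\meet\pc^j(\UN)$ under $-$ and the $\pc^k$); finiteness of the number of such prime structures is Corollary~\ref{cor:structures-a-n-generateurs}, not a count of dimension profiles. And $\bar T_d$ itself is not $\aleph_0$\--categorical (it is incomplete); what holds, as in Corollary~\ref{cor:completions}, is that each of its finitely many completions is.
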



The axiomatization of $\bar T_d$ given in
Section~\ref{se:model-completion} consists of a pair of axioms
expressing a ``Catenarity'' and a ``Splitting'' property which both
have a natural topological and geometric meaning. In particular the
Splitting Property expresses a very strong form of disconnectedness,
which implies that the models of $\bar T_d$ are atomless. 

\begin{remark}
  Since $0$\--subscaled lattices are exactly non-trivial boolean
  algebras, the above model-completion result for subscaled lattices is
  a generalisation to arbitrary finite dimension $d$ of the classical
  theorem on the model-completion of boolean algebras.
\end{remark}

We develop in Sections~\ref{se:atom-scal-latt} and
\ref{se:mod-comp-ASC} a variant of this quantifier elimination result
in a language $\lasc=\lsc\cup\{\AT_k\}_{k\geq1}$, where each $\AT_k$ is a
unary predicate symbol, to be interpreted as the set of elements which
are the join of exactly $k$ atoms. The model-completion $\bar
T_d^{At}$ that we obtain is axiomatized by the Catenarity Property and
a small restriction of the Splitting Property which preserves the
atoms. This theory $\bar T_d^{At}$ has $\aleph_0$ prime models which can
easily be classified in terms of the prime models of $\bar T_d$, from
which it follows that it is decidable as well as all its completions
(Theorem~\ref{th:model-completion-ASC}).
\\

In the initial version of this paper \cite{darn-2006} we conjectured
that $\ldef(\QQ_p^d)$ might be a natural model of $\bar T_d^{At}$. This
intuition proved to be crucial in the proof of the triangulation of
semi-algebraic sets over a $p$\--adically closed field
\cite{darn-2017b}. Conversely it follows from this triangulation that
$\ldef(X)$ is indeed a model of $\bar T_d^{At}$, for every
semi-algebraic\footnote{A generalization to definable sets over more
general $P$\--minimal fields, if possible, has still to be done.} set
$X\subseteq K^m$ of dimension $\leq d$, from which we derive the following result 
in the last section (Theorem~\ref{th:Ldef-p-adic-super}).

\begin{theoremAlph}\label{th:intro-LQp-decid}
  Let $K$ be a $p$\--adically closed field, $X\subseteq K^m$ a semi-algebraic
  set. Then the complete theory of $L(X)$ is decidable, and eliminates
  quantifier in $\lasc$.
\end{theoremAlph}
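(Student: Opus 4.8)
The plan is to lean on two facts established earlier. The first is geometric and is the crux recalled just above: by the triangulation of semi-algebraic sets over a $p$\--adically closed field \cite{darn-2017b}, the structure $\ldef(X)$ is a model of $\bar T_d^{At}$, where $d=\dim X$. The second is model-theoretic: $\bar T_d^{At}$ is the model-completion of a universal $\lasc$\--theory (the theory of atom-subscaled lattices, given, as is $T_d$, by universal axioms) and, by Theorem~\ref{th:model-completion-ASC}, it is decidable together with all its completions. Throughout I identify $L(X)$ with its definitional expansion $\ldef(X)$, so that ``the complete theory of $L(X)$'' means $\mathrm{Th}_{\lasc}(\ldef(X))$, which is a completion of $\bar T_d^{At}$.

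For the elimination of quantifiers I would use that the model-completion of a universal theory admits quantifier elimination; hence $\bar T_d^{At}$ eliminates quantifiers in $\lasc$. This property passes to every consistent extension: if an $\lasc$\--formula $\varphi$ is equivalent to a quantifier-free $\psi$ modulo $\bar T_d^{At}$, the same equivalence holds modulo the larger theory $\mathrm{Th}(\ldef(X))\supseteq\bar T_d^{At}$. Therefore $\mathrm{Th}(\ldef(X))$ eliminates quantifiers in $\lasc$, which is the second assertion.

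For decidability I would exhibit a decision procedure. Given an $\lasc$\--sentence $\sigma$, effective quantifier elimination for $\bar T_d^{At}$ produces a quantifier-free $\sigma'$ with $\bar T_d^{At}\models\sigma\leftrightarrow\sigma'$, and since $\ldef(X)\models\bar T_d^{At}$ it suffices to evaluate $\sigma'$ in $\ldef(X)$. Now $\sigma'$ is a boolean combination of atomic $\lasc$\--sentences formed from the closed terms over $\ZERO,\UN$, and there are only finitely many such terms up to equality, because the $\lsc$\--substructure generated by $\ZERO,\UN$ is finite (finitely generated $d$\--subscaled lattices are finite). Each closed term denotes a concrete semi-algebraic subset of $X$ obtained from $\emptyset$ and $X$ by the computable operations $\cup,\cap,-$ and $\pc^i$, and each atomic statement --- an inclusion or equality between two such sets, or the assertion $\AT_k(\tau)$ that $\tau$ is a set of exactly $k$ points --- is decidable by Macintyre's quantifier elimination for $p$\--adically closed fields. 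Hence $\sigma'$, and with it $\sigma$, is decided effectively.

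The soft parts of this argument are the passage of quantifier elimination to the completion and the evaluation of quantifier-free sentences, the latter being rendered effective by Macintyre's theorem. The real weight lies in the geometric input assumed here, namely that triangulation forces $\ldef(X)\models\bar T_d^{At}$: this is precisely where the global structure of $p$\--adic semi-algebraic sets enters, and it is the step I expect to be the main obstacle were it not already available from \cite{darn-2017b}.
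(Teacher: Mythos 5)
Your overall strategy --- reduce everything to Theorem~\ref{th:model-completion-ASC} once one knows that $\ldef(X)$ is a model of $\bar T_d^{At}$ --- is exactly the paper's, and your treatment of the ``soft'' consequences is correct: quantifier elimination for the model-completion passes to every consistent extension, hence to $\mathrm{Th}(\ldef(X))$. But there is a genuine gap where you declare the geometric input ``already available from \cite{darn-2017b}''. What that reference provides is only the partition theorem recalled here as Theorem~\ref{th:triang} (partitioning an open semi-algebraic set along a prescribed closed cover of its frontier). The statement you actually need --- that $L(X)$ satisfies the ASC\--Splitting and Catenarity axioms, i.e.\ is a super ASC\--lattice --- is \emph{not} taken off the shelf: deriving it from Theorem~\ref{th:triang} is the main content of the paper's proof of Theorem~\ref{th:Ldef-p-adic-super}. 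Concretely, given $B_1\join B_2\ll A$ in $L(X)$, one must pass to the closures in $K^m$, check that $\overline{B}_1\cup\overline{B}_2\ll\overline{A}$, apply Theorem~\ref{th:triang} to $W=\overline{A}\setminus(\overline{B}_1\cup\overline{B}_2)$ with $Y_i=\overline{B}_i$, and then verify by a density argument that $A_i=\overline{W}_i\cap A$ really split $A$ along $B_1,B_2$ inside $L(X)$ (the equality $A-A_1=A_2$ requires showing that $W_2\cap A$ is dense in $A_2$). Catenarity is likewise not free: it is proved in the appendix (Proposition~\ref{pr:dp-cat}) by a substantial argument valid for dp\--minimal valued fields with definable Skolem functions. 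Your proposal omits all of this, which is precisely where the $p$\--adic geometry enters.

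A secondary point on decidability: your procedure ``evaluate the quantifier-free sentence $\sigma'$ in $\ldef(X)$ via Macintyre's quantifier elimination'' is not effective uniformly in $X$, since $X$ is an arbitrary semi-algebraic set with arbitrary parameters from an arbitrary $p$\--adically closed field, and the truth of first-order conditions on such parameters need not be computable. What saves the statement is that decidability of the \emph{complete theory} of the fixed structure $L(X)$ only requires that theory to be a recursive set of sentences: it is axiomatized by $\bar T_d^{At}$ together with the diagram of the finite prime $\lasc$\--substructure of $L(X)$, a recursively axiomatized complete theory, hence decidable. This is how the paper argues (via Theorem~\ref{th:model-completion-ASC}); your observation that there are only finitely many closed terms up to equality is the right ingredient, but the appeal to Macintyre should be replaced by the finite lookup table furnished by the prime substructure.
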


The prime $\lasc$\--substructure of $\ldef(X)$ (which is generated by
the empty set) is finite. By Theorem~\ref{th:intro-LQp-decid} it
determines the complete theory of $\ldef(X)$. We expect this invariant
to play also a decisive role in the classification of semi-algebraic
sets over $p$\--adically closed fields up to semi-algebraic
homeomorphisms. Such a classification is far from being achieved, but
a weaker classification, up to ``pre-algebraic'' homeomorphisms over
countable $p$\--adically closed fields, is done here by means of this
invariant (Theorem~\ref{th:pre-alg-iso}).

\section{Notation and definitions}
\label{se:notation-definitions}

$\NN$ denotes the set of non-negative integers, and $\NN^*=\NN\setminus\{0\}$. If
$\cN$ is an unbounded non-empty subset of $\NN$ (resp. the empty subset)
we set $\max\cN=+\infty$ (resp. $\max\cN=-\infty$). The symbols $\subseteq$ and $\subset$
denote respectively inclusion and strict inclusion. The logical
connectives `or', `and' and their iterated forms will be denoted by
$\disj$, $\conj$, $\ddisj$ and $\cconj$ respectively.

\subsection{Lattices and dimension}

In this paper a {\df lattice} is a partially ordered set in which
every finite subset has a greatest lower element and a least greater
bound. This applies in particular to the empty subset, hence our
lattices must have a least and a greatest bound. We let
$\llat=\{\ZERO,\UN,\join,\meet\}$ is the language of lattices, each symbol
having its obvious meaning. 
As usual $b\leq a$ is an abbreviation for $a\join b =a$ and similarly for
$b<a$, $b\geq a$ and $b>a$. Iterated $\join$ and $\meet$ operations are denoted
by $\jjoin_{i\in I}a_i$ and $\mmeet_{i\in I}a_i$ respectively. If the index set
$I$ is empty then $ \jjoin_{i\in I}a_i=\ZERO$ and $\mmeet_{i\in
I}a_i=\UN$. Given a subset $S$ of a lattice $L$, the {\df upper
semi-lattice} generated in $L$ by $S$ is the set of finite joins of
elements of $S$. 

The {\df spectrum} of a lattice $L$ is the set $\Sp(L)$ of all prime
filters of $L$, endowed with the so-called Zariski topology, defined
by taking as a basis of closed sets all the sets
\[
P(a)=\{\p\in\Sp(L)\tq a\in\p\}
\]
for $a$ ranging over $L$. Stone's duality asserts that if
$L$ is distributive (which is always the case in this paper) the map
$a\mapsto P(a)$ is an isomorphism between $L$ and the lattice of closed
subsets $S$ of $\Sp(L)$ such that the complement of $S$ in $\Sp(L)$ is
compact. 

We call a lattice {\df noetherian} if it is isomorphic to the lattice
of closed sets of a noetherian topological space. By Stone's
duality a lattice $L$ is noetherian if and only if its spectrum is a
noetherian topological space. In such a lattice every filter is
principal and every element $a$ can be written uniquely as the join of its
(finitely many) {\df $\join$\--irreducible components}, which are the
maximal elements in the set of $\join$\--irreducible\footnote{An element
  $x$ of a lattice $L$ is $\join$\--irreducible if it is {\em non-zero} and if
$a\join b=x$ implies $a=x$ or $b=x$.} elements of
$L$ smaller than $a$. We denote by $\cI(L)$ the set of all 
$\join$\--irreducible elements of $L$.
\\

We define the {\df dimension of an element $a$ in a lattice $L$},
denoted $\dim_L a$, as the least upper bound (in $\NN\cup\{-\infty,+\infty\}$) of the
set of non-negative integers $n$ such that
\[
  \exists\p_0\subset\cdots\subset\p_n\in P(a). 
\]
This is nothing but the ordinary topological or Krull dimension (defined by
chains of irreducible closed subsets) of the spectral space $P(a)$. By
construction $\dim_L\ZERO=-\infty$ and $\dim_L a\join b=\max(\dim_L a,\dim_L
b)$. The subscript $L$ is necessary since $\dim_L a$ is not preserved by
$\llat$\--embeddings, but we will omit it whenever the ambient lattice
is clear from the context. We let the {\df dimension of $L$} be the
dimension of $\UN_L$ in $L$.

The following definable relation will give us a
first-order definition of the dimension of the elements of $L$ inside
$L$, when $L$ is a co-Heyting algebra (see Fact~\ref{fa:duality}
below):
\[
  b\ll a \iff \forall c\;(c< a\donc b\join c < a)
\]
This is a strict order on $L\setminus\{\ZERO\}$ (but not on $L$ because
$\ZERO\ll a$ for every $a$, including $a=\ZERO$).

\subsection{Co-Heyting algebras}
We let $\ltc=\llat\cup\{-\}$ with `$-$' a binary function symbol. A
$\ltc$\--structure $L$ is a {\df co-Heyting algebra} if its
$\llat$\--reduct is a lattice and if every element $b$ has in $L$ a
{\df topological complement} relatively to every element, denoted
$a-b$. By definition $a-b$ is the least element $c$ such that $a\leq b\join
c$. Equivalently $P(a-b)$ is the topological closure of the relative
complement $P(a)\setminus P(b)$, hence the notation $a-b$. Reversing the order
of a co-Heyting algebra $L$ gives a Heyting algebra $L^*$,
with $b\to a$ in $L^*$ corresponding to $a-b$ in $L$, and every
co-Heyting algebra is of this form. From the theory of Heyting
algebras (see for example \cite{John-1982}) we know that every
co-Heyting algebra is distributive and that the class of all
co-Heyting algebras is a variety (in the sense of universal algebra).
Observe that in co-Heyting algebras the $\ll$ relation is
quantifier-free definable since
\[
b\ll a \iff b\leq a-b.
\]
So it will be preserved by $\ltc$\--embeddings. On the other hand,
dimension will not be preserved in general by $\ltc$\--embeddings.

We will use the following rules, the proof of which are
elementary exercises (using either Stone's duality or
corresponding properties of Heyting algebras). 
\newcommand{\tcref}[1]{$\rm TC_{\ref{#1}}$}
\begin{list}{$\rm\bf TC_\theenumi$:}{\usecounter{enumi}}
\item\label{TC:a=(a-inter-b)-union-(a-b)}
$a=(a\meet b)\join(a-b)$.\\
In particular if $a$ is $\join$\--irreducible then 
$b< a\Ldonc b\ll a$.
\item\label{TC:(x-union-y)-z}
$(a_1\join a_2)-b = (a_1-b)\join (a_2-b)$.
\item\label{TC:(x-y)-y}
$(a-b)-b=a-b$.\\
In particular $(a-b)\meet b\ll a-b \leq a$.
\item\label{TC:x-(y-union-z)}
More generally $a-(b_1\join b_2)=(a-b_1)-b_2$.\\
So if $a-b_1=a$ then $a-(b_1\join b_2)=a-b_2$.
\end{list}

\begin{fact}[Theorem~3.8 in \cite{darn-junk-2011}]
\label{fa:ldim-et-TCdim}
For every element $a\neq\ZERO$ in a co-Heyting algebra $L$, $\dim_L a$ is the
least upper bound of the set of positive integers $n$ such that there
exists $a_0,\dots,a_n\in L$ such that 
\[
\ZERO\neq a_0\ll a_1\ll\cdots\ll a_n\leq a.
\]
\end{fact}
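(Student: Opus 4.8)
The plan is to transport everything through Stone's duality and argue in the spectral space $\Sp(L)$. Recall that $\dim_L a$ is by definition the Krull dimension of the closed subspace $P(a)$, and that for any $b$ the equality $P(a-b)=\overline{P(a)\setminus P(b)}$ holds. Hence $b\ll a$ (i.e.\ $b\leq a-b$) translates into $P(b)\subseteq\overline{P(a)\setminus P(b)}$, that is: \emph{$P(b)$ has empty interior in $P(a)$}, equivalently $P(a)\setminus P(b)$ is a dense open subset of $P(a)$. I will prove the two inequalities between $\dim_L a$ and $D:=\sup\{n : \exists\,\ZERO\neq a_0\ll\cdots\ll a_n\leq a\}$ separately.

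For $\dim_L a\geq D$ it suffices to show that $b\ll a$ with $b\neq\ZERO$ forces $\dim_L a\geq\dim_L b+1$; the conclusion then follows by induction along a chain $\ZERO\neq a_0\ll\cdots\ll a_n\leq a$, since $a_0\neq\ZERO$ gives $\dim_L a_0\geq0$. So fix a chain of prime filters $\p_0\subset\cdots\subset\p_m$ in $P(b)$. Because $P(b)$ has quasi-compact complement in $\Sp(L)$ and $P(a)$ is closed, $W:=P(a)\setminus P(b)$ is a quasi-compact open subset of $P(a)$, and it is dense since $b\ll a$. The generic point $\p_0$ of the chain lies in $\overline{W}=P(a)$; invoking the standard fact that in a spectral space a point of the closure of a quasi-compact open set is a specialization of a point of that set, I get $\w\in W$ with $\w\subseteq\p_0$. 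As $\w\notin P(b)\ni\p_0$ we have $\w\subset\p_0$, so $\w\subset\p_0\subset\cdots\subset\p_m$ is a chain of length $m+1$ in $P(a)$, proving $\dim_L a\geq m+1$.

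The converse $\dim_L a\leq D$ is the heart of the matter, and reduces to the following claim: if $\dim_L a\geq n\geq1$ then there is $a'\ll a$ with $a'\neq\ZERO$ and $\dim_L a'\geq n-1$; iterating this (with $a_n=a$) manufactures the required chain $a_0\ll\cdots\ll a_n\leq a$. To prove the claim, pick a prime chain $\p_0\subset\cdots\subset\p_n$ in $P(a)$ and an element $c\in\p_1\setminus\p_0$, and set $a'=(a\meet c)\meet\bigl(a-(a\meet c)\bigr)$, whose spectrum $P(a\meet c)\cap\overline{P(a)\setminus P(a\meet c)}$ is exactly the topological \emph{boundary} in $P(a)$ of the closed set $P(a\meet c)$. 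The boundary of a closed set has empty interior, so $a'\ll a$. The key observation is that the whole subchain survives: each $\p_j$ with $j\geq1$ contains $a\meet c$ hence lies in $P(a\meet c)$, while $\p_0\notin P(a\meet c)$ together with $\p_0\subseteq\p_j$ shows that $\p_j$ is a specialization of a point outside $P(a\meet c)$, i.e.\ $\p_j\in\overline{P(a)\setminus P(a\meet c)}$; thus $\p_1\subset\cdots\subset\p_n$ is a chain in $P(a')$ and $\dim_L a'\geq n-1$, with $a'\neq\ZERO$.

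I expect the genuine difficulty to be exactly this last construction: the naive witnesses for the dropped dimension are the irreducible closed sets $\overline{\{\p_j\}}$, which carry the right dimension but need not be of the form $P(\cdot)$, whereas the relative boundary operator $x\mapsto x\meet(a-x)$ produces honest lattice elements that are automatically nowhere dense but a priori threaten to collapse the dimension. Reconciling the two---by choosing the cut $c$ so that the generic point $\p_0$ of the chain is expelled, which forces its entire set of specializations $\p_1,\dots,\p_n$ into the boundary---is the crux; the only other non-formal input is the cited spectral-space fact used in the first inequality.
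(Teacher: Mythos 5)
Your proof is correct. There is nothing internal to compare it with: the paper imports this statement verbatim as Theorem~3.8 of \cite{darn-junk-2011} and gives no proof, so your spectral-space argument is a self-contained substitute rather than a variant of anything in the text. Both halves check out. For $\dim_L a\geq D$ the reduction to the single step ``$\ZERO\neq b\ll a$ implies $\dim_L a\geq\dim_L b+1$'' is right, and the one external input you flag --- in a spectral space every point of the closure of a quasi-compact open set is a specialization of a point of that set --- is genuinely standard; in the present lattice setting it even has a two-line proof by the prime filter theorem: if no prime filter $\mathbf{q}\subseteq\p_0$ contained $a$ and omitted $b$, then the filter $a^{\uparrow}$ would meet the ideal generated by $b$ together with the prime ideal $L\setminus\p_0$, giving $a\leq b\join y$ for some $y\notin\p_0$, whence $P(a)\setminus P(b)\subseteq P(y)$ and $\p_0\in\overline{P(a)\setminus P(b)}\subseteq P(y)$, a contradiction. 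For $\dim_L a\leq D$ the boundary element $a'=(a\meet c)\meet\bigl(a-(a\meet c)\bigr)$ does exactly what you claim: $P(a')$ is the boundary in $P(a)$ of the closed set $P(a\meet c)$, hence nowhere dense in $P(a)$, which gives $a'\ll a$ since $a'\leq a$; and $c\in\p_1\setminus\p_0$ forces $a\meet c\in\p_j$ for $j\geq1$ while $a\meet c\notin\p_0$, so each $\p_j$ with $j\geq1$ lies in $P(a\meet c)\cap\overline{\{\p_0\}}\subseteq P(a')$, preserving the tail of the chain. The only blemish is in the statement as quoted rather than in your argument: ``positive integers $n$'' should be read as ``non-negative integers'', since for $\dim_L a=0$ the set of positive such $n$ is empty (and the paper's convention would then give $-\infty$); with that reading, the degenerate chain $a_0=a$ supplies $n=0$ and your induction covers all cases, including $\dim_L a=+\infty$.
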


In all the geometric examples given in the introduction, a set $A$ is
said to be pure dimensional if and only if $\dim U=\dim A$ for every
non-empty definable subset $U$ of $A$ which is open in $A$. This
motivates the next definition: given an integer $k$ we say that an
element $a$ of a distributive lattice $L$ is {\df $k$\--pure in $L$}
if and only if
\[
\forall b\in L\ (a-b\neq\ZERO\donc \dim_L a-b =k).
\]
Then either $a=\ZERO$ or $\dim_L a = k$. In the latter case 
we say that $a$ {\df has pure dimension $k$ in $L$}. 
If $L$ is any of the lattices
$\ldef(X)$ or $\lzar(X)$ in Example~\ref{ex:Ldef}, for every $A\in L$ we
will show in Section~\ref{se:appendix} that $\dim_L A$ is exactly the
usual (geometric) dimension of $A$. It follows that $A$ is pure
dimensional in $L$ if and only if it so in the geometric
sense. 
\\

There is a well-established duality between (co-)Heyting algebras and
so-called Esakia spaces with p-morphisms, from which we will pick up
Fact~\ref{fa:duality} below. We first need a notation. Given an
element $x$ in a poset $I$ and a subset $X$ of $I$ let
\begin{align*}
  x^\downarrow = \big\{y\in I\tq y\leq x\big\} & &  X\downarrow = \bigcup_{x\in I} x^\downarrow.
\end{align*}
The dual notation $x^\uparrow$ and $X\uparrow$ is defined accordingly. The family
$\cD^\downarrow(I)$ of decreasing subsets of $I$ (that is the sets $X\subseteq I$ such that
$X\downarrow=X$) are the closed sets of a topology on $I$, hence a co-Heyting
algebra with respect to the following operations.
\begin{align*}
  X \join Y =X\cup Y & & X\meet Y = X\cap Y & & X-Y = (X\setminus Y)\downarrow
\end{align*}
The $\join$\--irreducible elements of $\cD^\downarrow(I)$ are precisely the sets
$x^\downarrow$ for $x\in I$.

\begin{fact}\label{fa:duality}
  Let $L$ be a finite co-Heyting algebra and $\cI$ an ordered set.
  Assume that there is a surjective increasing map $\pi:\cI\to\cI(L)$ 
  such that $\pi(x^\uparrow)\subseteq \pi(x)\uparrow$ for every $x\in\cI(L)$.
  Then there exists an $\ltc$\--embedding $\varphi$ of $L$ into
  $\cD^\downarrow(\cI)$ such that\footnote{Note that the composition $\pi\circ\varphi$ is
  not defined. In this proposition $\varphi(a)$ is a decreasing subset of
  $\cI$ and $\pi(\varphi(a))=\{\pi(\xi)\tq\xi\in\varphi(a)\}$.} $\pi(\varphi(a))=a^\downarrow\cap\cI(L)$ for
  every $a \in L$. 
\end{fact}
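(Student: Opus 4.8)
The plan is to define $\varphi$ by pulling back down-sets along $\pi$ and then to verify the co-Heyting axioms term by term, the preservation of ``$-$'' being the only delicate point. Concretely, for each $a\in L$ I would set
\[
  \varphi(a)=\{x\in\cI\tq\pi(x)\leq a\}=\pi^{-1}\big(a^\downarrow\cap\cI(L)\big).
\]
Because $\pi$ is increasing, $x\in\varphi(a)$ and $y\leq x$ force $\pi(y)\leq\pi(x)\leq a$, so $\varphi(a)$ is decreasing and indeed lies in $\cD^\downarrow(\cI)$. Surjectivity of $\pi$ gives at once $\pi(\varphi(a))=a^\downarrow\cap\cI(L)$, the required normalisation; and since $L$ is finite, hence noetherian, $a=\jjoin\big(a^\downarrow\cap\cI(L)\big)$ can be recovered from $\pi(\varphi(a))$, so $\varphi$ is automatically injective.

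Next I would check that $\varphi$ respects the lattice language. The equalities $\varphi(\UN)=\cI$ and $\varphi(\ZERO)=\emptyset$ are immediate, the latter because every $\pi(x)$ lies in $\cI(L)$ and is therefore non-zero. Preservation of $\meet$ is trivial from the definition. For $\join$ the point is that each $\pi(x)$ is $\join$\--irreducible, so by distributivity $\pi(x)\leq a\join b$ holds iff $\pi(x)\leq a$ or $\pi(x)\leq b$; hence $\varphi(a\join b)=\varphi(a)\cup\varphi(b)$. Thus $\varphi$ is already an injective $\llat$\--embedding normalised as required.

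It remains to prove $\varphi(a-b)=\varphi(a)-\varphi(b)$, and this is where the whole weight of the hypothesis on $\pi$ comes in. Writing things out through the identity $X-Y=(X\setminus Y)\downarrow$ in $\cD^\downarrow(\cI)$ and the analogous formula in $\cD^\downarrow(\cI(L))$ (i.e.\ the Birkhoff representation of $L$, which is an isomorphism of co-Heyting algebras), one finds that $y\in\varphi(a)-\varphi(b)$ means there is $x\geq y$ with $\pi(x)\leq a$ and $\pi(x)\not\leq b$, whereas $y\in\varphi(a-b)$ means there is $j\in\cI(L)$ with $\pi(y)\leq j\leq a$ and $j\not\leq b$. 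The inclusion $\varphi(a)-\varphi(b)\subseteq\varphi(a-b)$ then follows from monotonicity alone, taking $j=\pi(x)$. The reverse inclusion is the crux and the main obstacle: from such a $j$ one must produce some $x\geq y$ with $\pi(x)=j$, that is, one needs $j\in\pi(y^\uparrow)$ whenever $j\in\pi(y)\uparrow$. This lifting of elements of $\cI(L)$ above $\pi(y)$ to elements of $\cI$ above $y$ is exactly the content of the assumed relation between $\pi(x^\uparrow)$ and $\pi(x)\uparrow$; granting it, the reverse inclusion follows and $\varphi$ is the desired $\ltc$\--embedding.
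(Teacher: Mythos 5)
Your proof is correct, and it is in fact more than the paper itself offers: the paper states this as a Fact ``picked up'' from the well-established duality between finite co-Heyting algebras and finite posets with p-morphisms, and gives no argument, whereas you carry out the direct verification. Your definition $\varphi(a)=\pi^{-1}\big(a^\downarrow\cap\cI(L)\big)$ is the standard one; the checks for $\ZERO$, $\UN$, $\meet$, $\join$, injectivity and the normalisation $\pi(\varphi(a))=a^\downarrow\cap\cI(L)$ are all sound (join-irreducibility plus distributivity for $\join$, the Birkhoff representation for injectivity and for the description of $a-b$ as the join of the $j\in\cI(L)$ with $j\leq a$, $j\not\leq b$), and you correctly isolate the preservation of $-$ as the only place where the extra hypothesis on $\pi$ enters. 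One point deserves to be made explicit rather than left implicit: what your argument actually uses is the lifting property $\pi(y)\uparrow\subseteq\pi(y^\uparrow)$ for every $y\in\cI$, i.e.\ the p-morphism condition, whereas the inclusion as literally printed in the statement, $\pi(x^\uparrow)\subseteq\pi(x)\uparrow$ (with $x$ moreover ranging over $\cI(L)$ instead of $\cI$), is an automatic consequence of $\pi$ being increasing and hence carries no information. You are reading the hypothesis in the only way that makes the Fact true, which is the right call, but you should say so explicitly instead of describing the lifting as ``exactly the content of the assumed relation''.
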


\subsection{(Sub)scaled lattices.}
Recall that $\lsc=\llat\cup\{-,\pc^i\}_{i\in\NN}=\ltc\cup\{\pc^i\}_{i\in\NN}$ where
$\{\pc^i\}_{i\in\NN}$ is a family of new unary function symbols. With the
examples of the introduction in mind, we define the {\df sc\--dimension}
of a non-zero element $a$ of an $\lsc$\--structure $L$ as 
\[
\scdim a = 
\min\bigl\{k\in\NN\tq a=\jjoin_{0\leq i\leq k} \pc^i(a)\bigr\}.
\]
Of course this is defined only if $\scdim a=\jjoin_{0\leq i\leq k} \pc^i(a)$, for
some $k$. If it is not defined we let $\scdim a=+\infty$, and by convention 
$\scdim\ZERO=-\infty$. The {\df sc\--dimension of $L$}, denoted
$\scdim(L)$, is the sc\--dimension of $\UN_L$. In general the
dimension of an element in a co-Heyting algebra is not preserved by
$\ltc$\--embeddings. On the contrary the sc\--dimension of an
element is obviously preserved by $\lsc$\--embeddings, and this
is the ``raison d'\^etre'' of this structure.

A {\df $d$\--subscaled lattice} is an $\lsc$\--structure
whose $\ltc$\--reduct is a co-Heyting algebra and which satisfies the
following list of axioms:
\newcommand{\ssref}[1]{$\rm SS_{\ref{#1}}$}
\newcommand{\ssun}{$\rm SS_1^d$}
\newcommand{\ssdeux}{$\rm SS_2^d$}
\begin{list}{$\rm\bf SS_{\theenumi}$:}{\usecounter{enumi}}
  \refstepcounter{enumi}
  \refstepcounter{enumi}
  \item[$\rm\bf SS_1^d$:]
    $\displaystyle \jjoin_{0\leq i\leq d} \pc^i(a)=a$\quad
    and\quad $\forall i>d$, \ $C^i(a)=\ZERO$.
  \item[$\rm\bf SS_2^d$:]
    $\forall I\subseteq\{0,\dots,d\},\ \forall k$:
    \[
    \pc^k\Bigl(\jjoin_{i\in I} \pc^i(a)\Bigr)=
    \left\{
    \begin{array}{cl}
    \ZERO  & \hbox{if } k\nin I\\
    \pc^k(a) & \hbox{if } k\in I\\
    \end{array}
    \right.
    \]
  \item\label{SS:Ck(a-union-b)}%
    $\forall k\geq \max(\scdim(a),\scdim(b))$,\quad
    $\pc^k(a\join b)=\pc^k(a)\join \pc^k(b)$
  \item\label{SS:Ci-inter-Cj}%
    $\forall i\neq j$,\quad 
    $\scdim \left(\pc^i(a) \meet \pc^j(a) \right) < \min(i,j)$
  \item\label{SS:Ck(a)-moins-b=Ck(a)}%
    $\forall k\geq \scdim(b)$,\quad $\pc^k(a)- b = \pc^k(a)- \pc^k(b)$.
  \item\label{SS:ll-scdim}%
    If $b\ll a$ then $\scdim b<\scdim a$.
\end{list}
\newcounter{sauvenumi}
\setcounter{sauvenumi}{\theenumi}
It is a {\df $d$\--scaled lattice} if it satisfies in addition the
following property: 
\newcommand{\scref}[1]{$\rm SC_{\ref{#1}}$}
\begin{list}{$\rm\bf SC_{\theenumi}$:}{\usecounter{enumi}}
  \setcounter{enumi}{-1}
  \item
    \label{SC:scdim-dim}
    $\scdim a=\dim a$
\end{list}

All the geometric $\lsc$\--structures in $\sdef(\cK,d)$ or
$\szar(K,d)$ (defined after Example~\ref{ex:Ldef}) are $d$\--scaled
lattices (see Proposition~\ref{pr:geom-scaled}). However
\scref{SC:scdim-dim} does not follow from the other axioms as the
following example shows. 

\begin{example}\label{ex:Lnoeth}
Let $L$ be an arbitrary noetherian lattice, and $D\colon\cI(L)\to\{0,\dots,d\}$ be a
strictly increasing map. For every $a,b\in L$, if $\cC(a)$
denotes the set of all $\join$\--irreducible components of $a$, let
\begin{align*}
  a-b &= \jjoin\{c\in\cC(a)\tq c\not\leq b\},  \\
  (\forall k)\qquad \pc_D^k(a) &= \jjoin\{c\in\cC(a)\tq D(c)=k\}. 
\end{align*}
This is a typical example of a $d$\--subscaled lattice
in which the sc\--dimension does not coincide with the dimension,
except of course if $D(a)=\dim_L a$ for every $a\in\cI(L)$. Conversely,
every noetherian (in particular every finite) $d$\--subscaled lattice is of
that kind.
\end{example}

We call {\df (sub)scaled lattices} the $\lsc$\--structures whose
$\lsc$\--reduct is a $d$\--(sub)scaled for some $d\in\NN$. Of course this
is not an elementary class. On the contrary, for any fixed $d\in\NN$,
\ssun{} to \ssref{SS:ll-scdim} are expressible by a
universal formula and \scref{SC:scdim-dim} by a first order formula
in $\lsc$, hence $d$\--scaled (resp. $d$\--subscaled) lattices form
elementary class. As the
terminology suggests, we will see that $d$\--subscaled lattices are
precisely the $\lsc$\--substructures of $d$\--scaled lattices. 

\begin{remark}
  \ssun{} to \ssref{SS:Ck(a)-moins-b=Ck(a)} are actually expressible
  by equations in $\lsc$, hence define a
  variety\footnote{Is this the
    variety generated by $d$\--scaled lattices? This question might be
    of importance for further developments in non-classical
    logics.}
  (in the sense of universal algebra). This is clear for \ssun{} and
  \ssdeux{}. The other ones can then be written as follows. 
  \begin{description}
    \item[\ssref{SS:Ck(a-union-b)}]
      $(\forall k\geq l),\quad \pc^k(\jjoin_{i\leq l}\pc^l(a)\join\pc^l(b))
      = \pc^k(\jjoin_{i\leq l}\pc^l(a))\join \pc^k(\jjoin_{i\leq l}\pc^l(b))$
    \item[\ssref{SS:Ci-inter-Cj}] 
      $\forall i>j,\quad \pc^i(a) \meet \pc^j(a) 
      = \jjoin_{k<j}\pc^k(\pc^i(a) \meet \pc^j(a))$
    \item[\ssref{SS:Ck(a)-moins-b=Ck(a)}] 
      $\forall k\geq0,\quad \pc^k(a) - \jjoin_{l\leq k}\pc^l(b) 
      = \pc^k(a) - \pc^k(b)$
  \end{description}
\end{remark}

By analogy with our guiding geometric examples, we say that an element
$a$ in a $d$\--subscaled lattice is {\df $k$\--sc\--pure} if
\[
  \forall b\in L\ (a-b\neq\ZERO\donc \scdim(a-b )=k). 
\]
We will see that $a$ is $k$\--sc\--pure if and only
if $a=\pc^k(a)$ (this is \ssref{SS:Ck(a)-k-pure} in
Section~\ref{se:embedd-subsc-latt}). Then either $a=\ZERO$ or $\scdim
a=k$. In the latter case we say that $a$ {\df has pure sc\--dimension
$k$}. For any $a$, the element $\pc^k(a)$ is called the {\df
$k$\--sc\--pure component} of $a$, or simply its {\df $k$\--pure
component} if $L$ is a scaled lattice. By construction these notions
coincide with their geometric counterparts in $\sdef(\cK,d)$ and
$\szar(K,d)$. 
\\

The following notation will be convenient in induction
arguments. If $\cL$ is any of our languages $\llat$, $\ltc$
or $\lsc$ we let $\cL^*=\cL\setminus\{\UN\}$. Given an $\cL$\--structure $L$
whose reduct to $\llat$ is a lattice, for any $a\in L$ we let 
\[
  L(a)=\{b\in L\tq b\leq a\}. 
\]
$L(a)$ is a typical example of $\cL^*$\--substructure of $L$.

\section{Basic properties and embeddings}
\label{se:embedd-subsc-latt}

The next properties follow easily from the axioms of $d$\--subscaled
lattices.

\begin{list}{$\mathbf{SS_{\theenumi}}$:}{\usecounter{enumi}}
  \setcounter{enumi}{\thesauvenumi}
  \item
    \label{SS:scdim-max-k}
    $\scdim a=\max\{k\tq \pc^k(a)\neq\ZERO\}$.\\
    In particular $\forall k,\ \scdim \pc^k(a)=k\Lssi\pc^k(a)\neq\ZERO$.
  \item
    \label{SS:Ck-a-moins-b}
    $\forall k\geq \scdim(a)$, $\scdim(b\meet a) <k \Ldonc\pc^k(a)- b = \pc^k(a)$.
  \item
    \label{SS:scdim-union}%
    $\scdim a\join b =\max(\scdim a, \scdim b)$.\\
    In particular $b\leq a\donc\scdim b\leq \scdim a$.
  \item
    \label{SS:main-composante}%
    $\forall k\geq \scdim(a)$, $\pc^k(a)$ is the largest $k$\--sc\--pure
    element smaller than $a$. 
  \item
    \label{SS:scdim-inf-dim}
    $\dim a\leq \scdim a$.
  \item
    \label{SS:a-moins-Cd(a)}
    $\displaystyle
    \forall I\subseteq\{0,\dots,d\},\quad
    a-\jjoin_{i\in I}\pc^i(a)=\jjoin_{i\nin I}\pc^i(a)
    $.\\
    In particular $\displaystyle\scdim\big(a-\jjoin_{i\geq k}\pc^i(a)\big)<k$. 
  \item
    \label{SS:Ck(a)-k-pure}
    $\displaystyle
    \forall k,\quad
    \pc^k(a)=a \Lssi
    \forall b\ \big(a-b\neq\ZERO\donc\scdim a-b = k\bigr)
    $.\\
    That is $a$ is $k$\--sc\--pure if and only if $a=\pc^k(a)$. \\
    In particular if $a$ is $\join$\--irreducible then $a$ is sc-pure by
    \ssun{}.
\end{list}

\begin{proof} (Sketch)
\ssref{SS:scdim-max-k} follows from 
\ssun{} and \ssdeux{};
\ssref{SS:Ck-a-moins-b} from \ssref{SS:Ck(a)-moins-b=Ck(a)}
and \ssref{SS:scdim-max-k};
\ssref{SS:scdim-union} from 
\ssdeux{}, \ssref{SS:Ck(a-union-b)} 
and \ssref{SS:scdim-max-k}; 
\ssref{SS:main-composante} from
\ssdeux{} and \ssref{SS:Ck(a-union-b)};
\ssref{SS:scdim-inf-dim} from \ssref{SS:ll-scdim} by
Fact~\ref{fa:ldim-et-TCdim}.
Only the two last properties require a little effort. 
\smallskip

{\bf \ssref{SS:a-moins-Cd(a)}:} 
For every $l\in I$, $\pc^l(a)\leq \jjoin_{i\in I}\pc^i(a)$
hence $ \pc^l(a)-\jjoin_{i\in I}\pc^i(a)=\ZERO$. 
On the other hand for every $l\nin I$ and 
every $i\in I$, $\pc^l(a)-\pc^i(a)=\pc^l(a)$ 
by \ssref{SS:Ci-inter-Cj} and \ssref{SS:Ck(a)-moins-b=Ck(a)}. 
So $\pc^l(a)-\jjoin_{i\in I}\pc^i(a)=\pc^l(a)$ 
by \tcref{TC:x-(y-union-z)}. Finally by 
\ssun{} and \tcref{TC:(x-union-y)-z},
\[
a-\jjoin_{i\in I}\pc^i(a)
=\jjoin_{l\leq d}\Bigl(\pc^l(a)-\jjoin_{i\in I}\pc^i(a)\Bigr)
=\jjoin_{l\nin I}\pc^l(a).
\]

{\bf \ssref{SS:Ck(a)-k-pure}:}
Assume that $a=\pc^k(a)$ and $a-b\neq\ZERO$ for some $b$. Then $\scdim a
=k$ so $\scdim(a-b)\leq k$ and $\scdim(a\meet b)\leq k$ by
\ssref{SS:scdim-union}. Since $a=(a-b)\join(a\meet b)$ it follows by
\ssref{SS:Ck(a-union-b)} that
\[
  \pc^k(a)=\pc^k\big((a-b)\join(a\meet b)\big)=\pc^k(a-b)\join\pc^k(a\meet b).
\]
By assumption $\pc^k(a)=a$, and
$\pc^k(a\meet b)\leq a\meet b$ by \ssun{}. 
So $a\leq \pc^k(a-b)\join (a\meet b)\leq \pc^k(a-b)\join b$ which implies that
$a-b\leq \pc^k(a-b)$. In particular $\pc^k(a-b)\neq\ZERO$. Since
$\scdim(a-b)\leq k$ it follows that $\scdim(a-b)=k$ by 
\ssref{SS:scdim-max-k}.

Conversely assume that $a\neq\pc^k(a)$ (hence $a\neq\ZERO$). For
$b=\pc^k(a)$ we then have $a-b\neq\ZERO$ on one hand and $\scdim(a-b)\neq k$
by \ssref{SS:scdim-max-k} on the other hand, because
$\pc^k(a-b)=\ZERO$ by \ssref{SS:a-moins-Cd(a)} and \ssdeux{}.
\end{proof}

\begin{proposition}\label{pr:scaled-structure-definissable}
The $\lsc$\--structure of a $d$\--scaled lattice $L$ is 
uniformly definable in the $\llat$\--structure of $L$. 
In particular it is uniquely determined by this 
$\llat$\--structure. 
\end{proposition}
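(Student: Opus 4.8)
The plan is to show that both the binary operation $-$ and every unary operation $\pc^k$ of a $d$\--scaled lattice $L$ are definable in its $\llat$\--reduct by formulas depending only on $k$ and $d$, not on $L$; uniqueness of the $\lsc$\--structure then follows at once. The operation $-$ is the easy part: in any co\--Heyting algebra $a-b$ is the least $c$ with $a\leq b\join c$, so it is defined in $\llat$ by $a\leq b\join z\land\forall w(a\leq b\join w\donc z\leq w)$, which singles out a unique element because the topological complement always exists. The whole difficulty is concentrated in the $\pc^k$, and the crucial leverage is the scaling axiom \scref{SC:scdim-dim}, which identifies $\scdim$ with the order\--theoretic dimension $\dim$. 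Indeed $\dim$ \emph{is} $\llat$\--definable here: by Fact~\ref{fa:ldim-et-TCdim} one has $\dim a\geq k$ iff there is a chain $\ZERO\neq a_0\ll a_1\ll\cdots\ll a_k\leq a$, the relation $\ll$ being quantifier\--free definable in $\ltc$ (hence in $\llat$, via the definition of $-$ above); and since \ssun{} forces $\dim a\leq d$, the predicate ``$\dim a=k$'' involves only the finitely many cases $k\in\{0,\dots,d\}$ and is thus an $\llat$\--formula.

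Next I would translate purity into lattice terms. An element is $k$\--sc\--pure iff it is $k$\--pure in the dimension sense, because \scref{SC:scdim-dim} makes the two defining conditions literally identical; hence there is an $\llat$\--formula $\mathrm{Pure}_k(x)$ expressing that $x$ is $k$\--pure. By \ssref{SS:main-composante}, whenever $k\geq\scdim a$ the component $\pc^k(a)$ is exactly the largest $k$\--pure element below $a$, which is then defined in $\llat$ by $z\leq a\land\mathrm{Pure}_k(z)\land\forall z'(z'\leq a\land\mathrm{Pure}_k(z')\donc z'\leq z)$. This already handles the top component, but not the lower ones, since \ssref{SS:main-composante} applies only above the sc\--dimension.

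To reach all $\pc^k$ I would peel off components from the top by a downward recursion on $j=d,d-1,\dots,0$. Put $r_d=a$; once $r_j$ is known, let $\pc^j(a)$ be the largest $j$\--pure element below $r_j$ and set $r_{j-1}=r_j-\pc^j(a)$. Using \ssun{}, \ssdeux{} and \ssref{SS:a-moins-Cd(a)} one checks by induction that $r_j=\jjoin_{i\leq j}\pc^i(a)$, whence $\scdim r_j\leq j$ and $\pc^j(r_j)=\pc^j(a)$; consequently \ssref{SS:main-composante} applies to $r_j$ with $k=j\geq\scdim r_j$, and the recursive clause indeed returns $\pc^j(a)$. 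Since $d$ is fixed, unfolding this finite recursion yields, for each $k$, an explicit $\llat$\--formula defining $\pc^k$, uniformly in $L$.

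I expect the only genuine obstacle to be the conceptual point already exploited above: the passage from the syntactically given, embedding\--invariant $\scdim$ to the lattice\--definable $\dim$ through \scref{SC:scdim-dim}. This is precisely where the ``scaled'' hypothesis is indispensable; for merely subscaled lattices the statement fails, as Example~\ref{ex:Lnoeth} produces distinct $\lsc$\--structures sharing a single $\llat$\--reduct. Once dimension is available in $\llat$, the remainder is the routine verification that the peeling recursion tracks the pure components, which I would carry out using the displayed identities for $\pc^k$ and $-$ together with \ssref{SS:scdim-max-k}.
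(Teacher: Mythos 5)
Your proposal is correct and follows essentially the same route as the paper: define $-$ as the least topological complement, get dimension (hence $k$\--purity) in $\llat$ from the definability of $\ll$ together with Fact~\ref{fa:ldim-et-TCdim} and \scref{SC:scdim-dim}, then recover $\pc^k$ by decreasing induction as the largest $k$\--pure element below $a-\jjoin_{i>k}\pc^i(a)$, which is exactly your $r_k$. The only cosmetic difference is that you peel off one component per step while the paper subtracts the join of all higher components at once; these coincide by \tcref{TC:x-(y-union-z)}.
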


\begin{proof}
Clearly the $\ltc$\--structure is an extension by definition of the
lattice structure of $L$. For every positive integer $k$ the class of
$k$-pure elements is uniformly definable, using the definability of
$\ll$ and Fact~\ref{fa:ldim-et-TCdim}. Then  so is the function
$\pc^k$ for every $k$, by decreasing induction on $k$. Indeed by
\ssref{SS:main-composante} and \ssref{SS:a-moins-Cd(a)}, $\pc^k(a)$ is
the largest $k$\--pure element $c$ such that $c\leq a-\jjoin_{i>k}\pc^i(a)$.
\end{proof}

We need a reasonably easy criterion for an $\llat$\--embedding of
subscaled lattices to be an $\lsc$\--embedding. In the special case of
a noetherian\relax
\footnote{Although we won't use it, let us mention that in the general
  case of an $\llat$\--embedding $\varphi:L\to L'$ between arbitrary subscaled
  lattices, one may easily derive from
  Proposition~\ref{pr:CNS-de-lsc-plongement}, by means of the Local
  Finiteness Theorem~\ref{th:TCS-treillis-type-fini} and the
  model-theoretic compactness theorem, that $\varphi$ is an
  $\lsc$\--embedding if and only if it preserves sc\--dimension and
  sc\--purity, that is for every $a\in L$ and every $k\in\NN$, $\pc^k(a)=a
  \Rightarrow \pc^k(\varphi(a))=\varphi(a)$.
}
embedded lattice, it is given by
Proposition~\ref{pr:CNS-de-lsc-plongement} below, whose proof will
use the following characterisation of sc\--pure components.

\begin{proposition}\label{pr:comp-pures}
  Let $L$ be a subscaled lattice and $a,a_0,\dots,a_d\in L$ be such that
  $a=\jjoin_{i\leq d}a_i$, each $a_i$ is $i$\--sc\--pure and $\scdim(a_i\meet
  a_j)<\min(i,j)$ for every
  $i\neq j$. Then $\pc^i(a)=a_i$ for every $i$.
\end{proposition}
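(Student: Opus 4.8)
The plan is to prove that the given decomposition $a = \jjoin_{i\leq d} a_i$ coincides with the canonical decomposition $a = \jjoin_{i\leq d}\pc^i(a)$ guaranteed by \ssun{}, by showing each $a_i$ equals $\pc^i(a)$. The most natural strategy is a downward induction on $i$, from $i=d$ to $i=0$. At each stage I want to isolate the top-dimensional part: since $a_i$ is $i$\--sc\--pure, \ssref{SS:Ck(a)-k-pure} gives $\pc^i(a_i)=a_i$, so applying $\pc^k$ to the join and controlling the cross terms should let me read off $a_i$ from $\pc^i(a)$.

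First I would compute $\pc^k(a) = \pc^k\bigl(\jjoin_{i\leq d} a_i\bigr)$ using \ssref{SS:Ck(a-union-b)}, which requires $k \geq \max_i \scdim(a_i)$. Since each $a_i$ is $i$\--sc\--pure we have $\scdim(a_i)\leq i$, so for $k=d$ the hypothesis of \ssref{SS:Ck(a-union-b)} is met and I get $\pc^d(a) = \jjoin_{i\leq d}\pc^d(a_i)$. Now $\pc^d(a_d)=a_d$ by \ssref{SS:Ck(a)-k-pure}, while for $i<d$ the element $a_i$ is $i$\--sc\--pure with $\scdim(a_i)\leq i < d$, so $\pc^d(a_i)=\ZERO$ by \ssref{SS:scdim-max-k}. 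Hence $\pc^d(a)=a_d$. This settles the base case. The key point making the induction work is that applying $\pc^k$ kills every $a_i$ with $\scdim(a_i)<k$ and fixes any $a_i$ that is genuinely $k$\--pure.

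For the inductive step, suppose I have shown $\pc^j(a)=a_j$ for all $j>k$. The difficulty is that \ssref{SS:Ck(a-union-b)} only lets me expand $\pc^k$ over the full join when $k\geq\max_i\scdim(a_i)$, which fails once $k<d$. To fix this I would instead work with the truncated element $a' = a - \jjoin_{i>k}\pc^i(a)$; by \ssref{SS:a-moins-Cd(a)} this equals $\jjoin_{i\leq k}\pc^i(a)$ and has $\scdim(a')<k+1$, i.e. $\scdim(a')\leq k$. Using the inductive hypothesis $\pc^i(a)=a_i$ for $i>k$ together with the hypothesis $a=\jjoin_{i\leq d}a_i$, I expect $a'$ to be expressible in terms of $a_0,\dots,a_k$, modulo low-dimensional cross terms $a_i\meet a_j$ whose sc\--dimension is $<\min(i,j)\leq k$. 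Then \ssref{SS:Ck(a-union-b)} applies to $a'$ at level $k$, and the same bookkeeping as in the base case should yield $\pc^k(a)=\pc^k(a')=a_k$.

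The main obstacle will be the careful control of the cross terms $a_i\meet a_j$ when reconstructing $a'=\jjoin_{i\leq k}\pc^i(a)$ from the $a_i$. The hypothesis $\scdim(a_i\meet a_j)<\min(i,j)$ is exactly what is needed to guarantee these intersections do not contaminate the top level of $a'$ at stage $k$, but turning ``$a'$ equals $\jjoin_{i\leq k}a_i$ up to terms of sc\--dimension $<k$'' into a clean equality $\pc^k(a)=a_k$ requires invoking \ssref{SS:Ck-a-moins-b} (or directly \ssref{SS:Ck(a)-moins-b=Ck(a)}) to discard those low-dimensional pieces when computing $\pc^k$. I would organize this by first establishing, perhaps as a preliminary claim, that $\jjoin_{i\leq k}a_i$ and $\jjoin_{i\leq k}\pc^i(a)$ have the same $\pc^k$, using \ssref{SS:Ci-inter-Cj}\--style purity of the decomposition; once that claim is in place the downward induction closes cleanly.
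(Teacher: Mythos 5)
Your proposal is correct and follows essentially the same route as the paper: downward induction starting from $\pc^d(a)=a_d$, then at each stage comparing $\pc^k$ of the truncation $a-\jjoin_{j>k}\pc^j(a)=\jjoin_{i\leq k}\pc^i(a)$ (which equals $\pc^k(a)$ by \ssdeux{} and \ssref{SS:a-moins-Cd(a)}) with $\pc^k\bigl(\jjoin_{i\leq k}a_i\bigr)=a_k$, using \ssref{SS:Ck-a-moins-b} and the cross-term hypothesis to identify $a-\jjoin_{j>k}a_j$ with $\jjoin_{i\leq k}a_i$ exactly. The only thing left to do is to carry out the bookkeeping you describe, which goes through as you expect.
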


\begin{proof}
  Note first that $\pc^k(a_k)=a_k$ for every $k\leq d$ by
  \ssref{SS:Ck(a)-k-pure}. Hence for every $k\leq i\leq d$ we have by
  \ssref{SS:Ck(a-union-b)} and \ssdeux{}
  \begin{equation}
    \pc^i\Big(\jjoin_{k\leq i}a_k\Big)=\jjoin_{k\leq i}\pc^i(a_k)=\jjoin_{k\leq
    i}\pc^i\big(\pc^k(a_k)\big)=\pc^i(a_i)=a_i.
    \label{eq:pure1}
  \end{equation}
  In particular $\pc^d(a)=a_d$. Now assume that for some $i<d$ we have
  proved that $\pc^j(a)=a_j$ for $i<j\leq d$. By
  \ssref{SS:a-moins-Cd(a)} and \ssdeux{} we then have
  \begin{equation}
    \pc^i\big(a-\jjoin_{j>i}a_j\big)
    =\pc^i\big(a-\jjoin_{j>i}\pc^j(a)\big)
    =\pc^i\big(\jjoin_{j\leq i}\pc^j(a)\big)
    =\pc^i(a).
    \label{eq:pure2}
  \end{equation}
  On the other hand $a-\jjoin_{j>i}a_j=\jjoin_{k\leq d}(a_k-\jjoin_{j>i}a_j)$ by
  \tcref{TC:(x-union-y)-z}. For 
  $k>i$ obviously $a_k-\jjoin_{j>i}a_j=\ZERO$. For $k<i$, $a_k=\pc^k(a)$ and
  $a_j=\pc^j(a)$ imply that $\scdim(a_k\meet
  a_j)<k$ for $j>i$ by \ssref{SS:Ci-inter-Cj}. Hence $a_k-a_j=a_k$ by
  \ssref{SS:Ck-a-moins-b} and finally $a_k-\jjoin_{j>i}a_j=a_k$ by
  \tcref{TC:x-(y-union-z)}, so 
  \begin{equation}
    a-\jjoin_{j>i}a_j=\jjoin_{k\leq i}a_k.
    \label{eq:pure3}
  \end{equation}
  By (\ref{eq:pure1}), (\ref{eq:pure2}), (\ref{eq:pure3})
  we conclude that $\pc^i(a)=a_i$. The result follows for
  every $i$ by decreasing induction.
\end{proof}

\begin{proposition}%
  \label{pr:CNS-de-lsc-plongement}%
  Let $L_0$ be a noetherian subscaled lattice, $L$ a subscaled
  lattice, and $\varphi:L_0\to L$ an $\llat$\--embedding such that for every
  $a\in\cI(L_0)$, $\varphi(a)$ is sc\--pure and has the same sc\--dimension as
  $a$. Then $\varphi$ is an $\lsc$\--embedding.
\end{proposition}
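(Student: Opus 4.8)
The plan is to use the explicit description of noetherian subscaled lattices from Example~\ref{ex:Lnoeth}. Since $L_0$ is noetherian, writing $\cC(a)$ for the finite set of $\join$\--irreducible components of $a$ in $L_0$, its $\lsc$\--operations are
\[
  a - b = \jjoin\{c \in \cC(a) \tq c \not\leq b\},
  \qquad
  \pc^k_{L_0}(a) = \jjoin\{c \in \cC(a) \tq \scdim_{L_0} c = k\}.
\]
Because $\lsc=\llat\cup\{-,(\pc^i)_i\}$ and $\varphi$ is already an $\llat$\--embedding (so it preserves $\ZERO,\UN,\meet$ and finite joins, giving $\varphi(a)=\jjoin_{c\in\cC(a)}\varphi(c)$), it remains only to check that $\varphi$ commutes with $-$ and with each $\pc^i$. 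Everything will rest on first proving the auxiliary claim that $\varphi$ \emph{preserves sc\--dimension}, which is the device that converts the join\--irreducible\--level hypothesis into information about arbitrary elements.

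For that claim I would take any $a$, decompose $\varphi(a)=\jjoin_{c\in\cC(a)}\varphi(c)$, and compute $\scdim_L\varphi(a)=\max_{c}\scdim_L\varphi(c)$ by \ssref{SS:scdim-union}. By hypothesis $\scdim_L\varphi(c)=\scdim_{L_0}c$ for each $\join$\--irreducible $c$, and $\max_c\scdim_{L_0}c=\scdim_{L_0}a$ again by \ssref{SS:scdim-union}; hence $\scdim_L\varphi(a)=\scdim_{L_0}a$. This lets me transport every $L_0$\--side inequality between sc\--dimensions across $\varphi$.

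For preservation of $\pc^i$, set $a_i=\varphi(\pc^i_{L_0}(a))=\jjoin\{\varphi(c)\tq c\in\cC(a),\ \scdim_{L_0}c=i\}$. Each $\varphi(c)$ in this join is $i$\--sc\--pure by hypothesis, and a finite join of $i$\--sc\--pure elements is again $i$\--sc\--pure (combine \ssref{SS:Ck(a-union-b)} and \ssref{SS:scdim-union} with the characterisation \ssref{SS:Ck(a)-k-pure}), so each $a_i$ is $i$\--sc\--pure. Moreover $\varphi(a_i)\meet\varphi(a_j)=\varphi(\pc^i_{L_0}(a)\meet\pc^j_{L_0}(a))$ has sc\--dimension $<\min(i,j)$, since this holds in $L_0$ by \ssref{SS:Ci-inter-Cj} and sc\--dimension is preserved. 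As $\varphi(a)=\jjoin_i a_i$, Proposition~\ref{pr:comp-pures} then yields $\pc^i_L(\varphi(a))=a_i=\varphi(\pc^i_{L_0}(a))$.

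For preservation of $-$, I would write $\varphi(a)-\varphi(b)=\jjoin_{c\in\cC(a)}(\varphi(c)-\varphi(b))$ by \tcref{TC:(x-union-y)-z} and treat each component separately. If $c\leq b$ then $\varphi(c)\leq\varphi(b)$ and the term is $\ZERO$. If $c\not\leq b$, then $c\meet b<c$, so $c\meet b\ll c$ by $\join$\--irreducibility (\tcref{TC:a=(a-inter-b)-union-(a-b)}), whence $\scdim_{L_0}(c\meet b)<\scdim_{L_0}c$ by \ssref{SS:ll-scdim}; transporting through $\varphi$ gives $\scdim_L(\varphi(c)\meet\varphi(b))<\scdim_L\varphi(c)$, and since $\varphi(c)$ is sc\--pure, \ssref{SS:Ck-a-moins-b} forces $\varphi(c)-\varphi(b)=\varphi(c)$. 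Summing, $\varphi(a)-\varphi(b)=\jjoin\{\varphi(c)\tq c\not\leq b\}=\varphi(a-b)$. The main obstacle throughout is precisely this passage from the $\join$\--irreducible\--level hypothesis to arbitrary elements: the noetherian decomposition supplies the elements, but it is the sc\--dimension preservation lemma that does the real work, so it is worth isolating before anything else.
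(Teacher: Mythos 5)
Your proof is correct and follows essentially the same route as the paper's: decompose into $\join$\--irreducible components, verify the hypotheses of Proposition~\ref{pr:comp-pures} to obtain $\pc^k(\varphi(a))=\varphi(\pc^k(a))$, and handle ``$-$'' via the dichotomy $c\leq b$ versus $c\meet b\ll c$. The only differences are cosmetic: you isolate sc\--dimension preservation up front as a standalone lemma (the paper derives it only after identifying the pure components, checking the smallness of $\scdim(\pc^k(a)\meet\pc^l(a))$ componentwise instead), and $\varphi(a_i)\meet\varphi(a_j)$ should read $a_i\meet a_j$ since your $a_i$ are already images under $\varphi$.
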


\begin{remark}
Clearly the same statement remains true with $\llat$ and $\lsc$ 
replaced respectively by $\llat^*$ and $\lsc^*$. 
We will freely use these variants.
\end{remark}

\begin{proof}
We have that $L_0$ and $L$ are $d$\--subscaled lattices for some $d\in\NN$.
Given $a\in L_0$ and $k$ a non-negative integer, 
we first check that $\varphi(\pc^k(a))$ is $k$\--sc\--pure. Note that every
$\join$\--irreducible component $c$ of $\pc^k(a)$ in $L_0$ has sc\--pure
dimension $k$. Indeed $\pc^k(a)$ is $k$\--sc\--pure by
\ssref{SS:Ck(a)-k-pure}, and $c=\pc^k(a)-b\neq\ZERO$ where $b$ is the
join of all the other $\join$\--irreducible components of $\pc^k(a)$,
hence $\scdim(c)=\scdim(\pc^k(a)-b)=k$. Moreover $c$ is sc\--pure
because it is $\join$\--irreducible, hence $c$ is $k$\--pure. By our
assumption on $\varphi$ it follows that $\varphi(c)$ is $k$\--sc\--pure. Every
finite union of $k$\--sc\--pure elements being $k$\--sc\--pure by
\ssref{SS:Ck(a-union-b)}, it follows that
\begin{equation}
  \varphi\big(\pc^k(a)\big)\mbox{ is $k$-sc-pure.} \label{eq:phi-k-pure}
\end{equation}

Now for every $l\neq k$ we have $\scdim(\pc^k(a)\meet\pc^l(a))<\min(k,l)$ 
by \ssref{SS:Ci-inter-Cj}. It follows that each 
$\join$\--irreducible component $c$ of $\pc^k(a)\meet\pc^l(a)$ 
has sc\--dimension strictly less than $\min(k,l)$, hence so does 
$\varphi(c)$ by assumption. By \ssref{SS:scdim-union} we conclude that 
\begin{equation}
   \scdim\big(\pc^k(a)\meet\pc^l(a)\big)<\min(k,l)\quad(\forall l\neq k).
   \label{eq:dim-de-phi-ak-inter-al}
\end{equation}

We have that $\varphi(a)=\jjoin_{k\leq d}\varphi(\pc^k(a))$ by \ssun{} and
because $\varphi$ is an $\llat$\--embedding. By (\ref{eq:phi-k-pure}),
(\ref{eq:dim-de-phi-ak-inter-al}) and Proposition~\ref{pr:comp-pures}
it follows that $\pc^k(\varphi(a))=\varphi(\pc^k(a))$ for every $k\leq d$. 
Since $\varphi$ is injective, this implies 
by \ssref{SS:scdim-max-k} that for every $a\in L_0$ 
\begin{equation}\label{eq:phi-preserve-scdim}
  \scdim a=\scdim\varphi(a).
\end{equation}

It only remains to check that $\varphi(a-b)=\varphi(a)-\varphi(b)$ for every $a,b\in L_0$. 
By \tcref{TC:(x-union-y)-z}, replacing if necessary $a$ 
by its $\join$\--irreducible components, we may assume w.l.o.g. 
that $a$ itself is $\join$\--irreducible in $L_0$. This implies 
that $a=\pc^k(a)$ for some $k$, hence $\varphi(a)$ is $k$\--sc\--pure by
assumption on $\varphi$. 
It then remains two possibilities for $a-b$:
\begin{itemize}
  \item 
    If $b\geq a$ then $\varphi(b)\geq\varphi(a)$, hence $a-b=\ZERO$ and
    $\varphi(a)-\varphi(b)=\ZERO$, so $\varphi(a-b)=\varphi(\ZERO)=\ZERO=\varphi(a)-\varphi(b)$.
  \item
    Otherwise $b\meet a<a$ hence $a-b=a$ by
    \tcref{TC:a=(a-inter-b)-union-(a-b)}. So we have
    to prove that $\varphi(a)-\varphi(b)=\varphi(a)$. By \ssref{SS:ll-scdim} 
    $\scdim b\meet a<\scdim a$, hence $\scdim(\varphi(b)\meet\varphi(a))<\scdim(\varphi(a))=k$ by
    (\ref{eq:phi-preserve-scdim}). Since $\varphi(a)$ is $k$\--sc\--pure it
    follows that $\varphi(a)-\varphi(b)=\varphi(a)$ by \ssref{SS:Ck-a-moins-b}. 
\end{itemize}
\end{proof}

\begin{corollary}\label{co:ss-lsc-structure}
  Let $L_0$ be a noetherian lattice embedded in a subscaled lattice
  $L$. Assume that every $b'<b\in\cI(L_0)$ are sc-pure in $L$ and
  $\scdim b'<\scdim b$ in $L$. Then the restrictions to $L_0$ of the
  $\lsc$\--operations ``$-$'' and ``$C_i$'' of $L$ turn $L_0$ into an
  $\lsc$\--substructure which is a subscaled lattice.
\end{corollary}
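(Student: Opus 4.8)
The plan is to first equip $L_0$ with the subscaled lattice structure provided by Example~\ref{ex:Lnoeth}, and then to recognise the inclusion $\iota\colon L_0\hookrightarrow L$ as an $\lsc$\--embedding by means of Proposition~\ref{pr:CNS-de-lsc-plongement}. Once $\iota$ is known to be an $\lsc$\--embedding it must commute with ``$-$'' and every $\pc^k$, so the operations just placed on $L_0$ coincide with the restrictions to $L_0$ of the corresponding operations of $L$; this is exactly the assertion of the corollary.

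Concretely, I would fix $d$ with $L$ a $d$\--subscaled lattice and define $D\colon\cI(L_0)\to\{0,\dots,d\}$ by $D(c)=\scdim_L c$. This is well defined and lands in $\{0,\dots,d\}$ since each $c\in\cI(L_0)$ is non\--zero (so $\scdim_L c\geq0$) while $\mathrm{SS}_1^d$ bounds all sc\--dimensions in $L$ by $d$. The hypothesis $\scdim b'<\scdim b$ (for $b'<b$ with $b\in\cI(L_0)$) says precisely that $D$ is strictly increasing on $\cI(L_0)$, so Example~\ref{ex:Lnoeth} applied to the noetherian lattice $L_0$ and this $D$ turns $L_0$ into a $d$\--subscaled lattice, with operations $-_0$ and $\pc_0^k$ given there in terms of the $\join$\--irreducible components and $D$.

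It then remains to verify the two hypotheses of Proposition~\ref{pr:CNS-de-lsc-plongement} for $\iota$ (in the $\llat^*$/$\lsc^*$ variant of the Remark following it, should $\UN$ not be preserved). The lattice $L_0$ so structured is noetherian subscaled and $\iota$ is an $\llat$\--embedding by assumption. For $a\in\cI(L_0)$ the only $\join$\--irreducible component of $a$ is $a$ itself, so by construction $\pc_0^k(a)=a$ when $k=D(a)$ and $\pc_0^k(a)=\ZERO$ otherwise; hence $\scdim_{L_0}a=D(a)=\scdim_L a$, i.e.\ $\iota(a)=a$ keeps the same sc\--dimension, and $a$ is sc\--pure in $L$ by hypothesis. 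Proposition~\ref{pr:CNS-de-lsc-plongement} then yields that $\iota$ is an $\lsc$\--embedding, whence $a-_0 b=a-_L b$ and $\pc_0^k(a)=\pc_L^k(a)$ for all $a,b\in L_0$: the operations of $L$ take values in $L_0$ on arguments from $L_0$ and agree with $-_0,\pc_0^k$, so $L_0$ is an $\lsc$\--substructure of $L$ which is a subscaled lattice. The main obstacle is conceptual rather than computational: the operations handed to us by Example~\ref{ex:Lnoeth} are a priori computed \emph{inside} $L_0$, and what must be secured is their agreement with the operations inherited from $L$ — an agreement that is exactly the content of the $\lsc$\--embedding delivered by Proposition~\ref{pr:CNS-de-lsc-plongement}.
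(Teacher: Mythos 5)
Your proposal is correct and follows the paper's own argument essentially verbatim: endow $L_0$ with the subscaled structure of Example~\ref{ex:Lnoeth} via $D(c)=\scdim_L c$ (strictly increasing by hypothesis), then apply Proposition~\ref{pr:CNS-de-lsc-plongement} to see the inclusion is an $\lsc$\--embedding, which forces the Example's operations to coincide with the restrictions of those of $L$. The extra care you take in spelling out why the embedding's compatibility with ``$-$'' and $\pc^k$ yields the stated conclusion is a correct elaboration of a step the paper leaves implicit.
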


\begin{proof}
The assumptions imply that the map $D:a\mapsto\scdim a$ 
is a strictly increasing map from $\cI(L_0)$ to $\NN$. Endow $L_0$ with
the structure of subscaled lattice determined by $D$ as in
Example~\ref{ex:Lnoeth}. By construction the inclusion map from $L_0$
to $L$ is an $\llat$\--embedding which preserves the sc-purity and
sc-dimension of every $b\in\cI(L_0)$, hence is an $\lsc$\--embedding by
Proposition~\ref{pr:CNS-de-lsc-plongement}. 
\end{proof}

\section{Local finiteness}
\label{se:local-finiteness}

We prove in this section that every finitely generated subscaled 
lattice is finite. This result is far from obvious, due to the lack 
of any known normal form for terms in $\lsc$. It contrasts with 
the general situation in co-Heyting algebras, which can be both infinite and 
generated by a single element. Our main ingredient, which explains 
this difference, is the uniform bound given {\it a priori} for the 
sc\--dimension of any element in a given $d$\--subscaled lattice.

\begin{theorem}\label{th:TCS-treillis-type-fini}
Any $d$\--subscaled lattice $L$ generated by $n$ 
elements is finite. More precisely, the cardinality of $\cI(L)$ 
is bounded by the function $\mu(n,d)$ defined by
\[
  \mu(n,d) = 2^n+ \mu(2^{n+1},d-1).
\]
for $d\geq0$, and $\mu(n,d)=0$ for $d<0$.
\end{theorem}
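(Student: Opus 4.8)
The plan is to argue by induction on $d$. For $d<0$ the axiom \ssun{} reads $a=\jjoin_{0\le i\le d}\pc^i(a)=\ZERO$ (an empty join) for every $a\in L$, so $L$ is trivial and $\card\cI(L)=0=\mu(n,d)$; this settles the base case. For $d\ge0$ I would split $\cI(L)$ into the set $\cI_d(L)$ of $\join$\--irreducibles of sc\--dimension $d$ and the set $\cI_{<d}(L)$ of those of strictly smaller sc\--dimension, and bound these two strata by the two summands $2^n$ and $\mu(2^{n+1},d-1)$ of the recursion. Finiteness of $L$ will then drop out once both strata are known to be finite.

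For the top stratum, set $I=\{a\in L\tq\scdim a<d\}$. By \ssref{SS:scdim-union} this is a co\--Heyting ideal (closed under $\join$ and downward\--closed), so the quotient co\--Heyting algebra $\overline L=L/I$ is well defined. Every class has the $d$\--pure representative $\pc^d(a)$, since $a-\pc^d(a)=\jjoin_{i<d}\pc^i(a)\in I$ by \ssref{SS:a-moins-Cd(a)}; and $\overline{\UN-a}$ is a complement of $\overline a$, because $(\UN-a)\join a=\UN$ while $(\UN-a)\meet a\ll\UN-a$ by \tcref{TC:(x-y)-y}, hence has sc\--dimension $<d$ by \ssref{SS:ll-scdim} and lies in $I$. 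Thus $\overline L$ is Boolean. The quotient map $q\colon L\to\overline L$ preserves $\join,\meet,-$ and sends each $\pc^k(a)$ to $\overline\ZERO$ (for $k<d$) or to $q(a)$ (for $k=d$), so $\overline L$ is generated as a Boolean algebra by the $n$ images $q(g_i)$ and has at most $2^n$ atoms. Finally, if $c\ne c'$ lie in $\cI_d(L)$ then neither is below the other, since $c<c'$ would force $c\ll c'$ by \tcref{TC:a=(a-inter-b)-union-(a-b)} and hence $\scdim c<\scdim c'$ by \ssref{SS:ll-scdim}; therefore $c\meet c'<c$, whence $c\meet c'\ll c$ and $\scdim(c\meet c')<d$, i.e. $\overline c\meet\overline{c'}=\overline\ZERO$. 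The classes $\overline c$ for $c\in\cI_d(L)$ are thus pairwise disjoint and nonzero, so $\card\cI_d(L)\le 2^n$.

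For the lower stratum I would find a set $H\subseteq I$ of at most $2^{n+1}$ elements generating $I$ as an $\lsc^*$\--structure, so that $I=\langle H\rangle_{\lsc^*}$. Because every $\lsc^*$\--operation returns an element below the join of its arguments, any $\lsc^*$\--term in the elements of $H$ lies below $\jjoin H\in I$; thus $I$ is a lattice with greatest element $\jjoin H$, and, all of its elements having sc\--dimension $<d$, it satisfies the defining axioms of a $(d-1)$\--subscaled lattice inherited from $L$. By the induction hypothesis $I$ is finite with at most $\mu(2^{n+1},d-1)$ $\join$\--irreducibles. Moreover $\cI_{<d}(L)=\cI(I)$, since any decomposition of $c\in I$ in $L$ uses only elements below $c$, which again lie in $I$; hence $\card\cI_{<d}(L)\le\mu(2^{n+1},d-1)$. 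The natural candidate for $H$ is formed by the (at most $n$) lower parts $g_i-\pc^d(g_i)$ of the generators together with the (at most $2^n$) boundaries $c\meet\jjoin_{c'\ne c}c'$ of the finitely many members $c\in\cI_d(L)$, a family of size at most $2^{n+1}$.

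Granting $I=\langle H\rangle_{\lsc^*}$, the whole result follows. Two $d$\--pure elements with the same class in $\overline L$ differ by an element of $I$, which \ssref{SS:Ck(a)-k-pure} forbids unless they coincide; hence $\pc^d(a)$ is determined by $\overline a$, and the map $a\mapsto\bigl(\overline a,\,a-\pc^d(a)\bigr)$ embeds $L$ into the finite product $\overline L\times I$, so $L$ is finite. Summing the two strata gives $\card\cI(L)=\card\cI_d(L)+\card\cI_{<d}(L)\le 2^n+\mu(2^{n+1},d-1)=\mu(n,d)$. The crux of the argument is precisely the generation claim $I=\langle H\rangle_{\lsc^*}$: one must show that every $<d$\--dimensional element arising from an $\lsc$\--term in $g_1,\dots,g_n$ is already an $\lsc^*$\--term in $H$. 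As the introduction stresses, there is no normal form for $\lsc$\--terms, so this resists a direct syntactic induction; understanding how iterated applications of $\pc^k$ and $-$ create new low\--dimensional elements, and verifying that none of them escapes $\langle H\rangle_{\lsc^*}$, is where I expect the genuine difficulty to lie.
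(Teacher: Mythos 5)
Your architecture is genuinely different from the paper's: you split $\cI(L)$ into the join-irreducibles of sc-dimension $d$ and those of smaller sc-dimension, bound the first stratum by $2^n$ via the Boolean quotient $\overline L=L/I$ (where $I$ is the ideal of elements of sc-dimension $<d$), and propose to handle the second stratum by applying the induction hypothesis to $I$. The top-stratum half is sound: $I$ is indeed a lattice ideal, the quotient is a Boolean algebra generated by the images of the $n$ generators (since $q\circ\pc^d=q$ and $q\circ\pc^k=\overline\ZERO$ for $k<d$), and distinct members of $\cI_d(L)$ have pairwise disjoint nonzero images, giving $\card\cI_d(L)\leq 2^n$. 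But the proof is incomplete exactly where you say it is: you never establish the generation claim $I=\langle H\rangle_{\lsc^*}$, and without it the induction hypothesis cannot be applied to $I$, so neither the finiteness of $I$ nor the bound $\card\cI_{<d}(L)\leq\mu(2^{n+1},d-1)$ is obtained. This is not a deferrable verification: an arbitrary element of $I$ is the value of an arbitrary $\lsc$-term in $g_1,\dots,g_n$ that happens to have small sc-dimension, and, as there is no normal form for such terms, showing that every such value is an $\lsc^*$-term in your particular $H$ (the lower parts $g_i-\pc^d(g_i)$ together with the boundaries $c\meet\jjoin_{c'\neq c}c'$) is essentially the entire difficulty of the theorem; nothing in the proposal addresses it.

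It is worth seeing how the paper sidesteps precisely this obstacle: it never tries to prove that the full ideal $I$ is finitely generated. Instead it works bottom-up. It introduces $y_I=\bigl(\mmeet_{i\in I}x_i\bigr)-\bigl(\jjoin_{i\nin I}x_i\bigr)$ and $z_I=\pc^d(y_I)$ for $I\subseteq\{1,\dots,n\}$, applies the induction hypothesis to the substructure $L_0^-$ generated by the $2^{n+1}$ explicit low-dimensional elements $y_I-z_I$ and $z_I\meet a$ (where $a$ is a single element of sc-dimension $<d$ collecting $\UN-\pc^d(\UN)$, the $y_I-z_I$ and the $z_I\meet z_J$), and then forms the upper semi-lattice $L_1$ generated by $L_0^-\cup\{z_I\}_{I}$. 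The decisive step is to verify, via Corollary~\ref{co:ss-lsc-structure}, that $L_1$ is an $\lsc$-substructure of $L$; since $L_1$ contains the generators $x_i$, it follows that $L_1=L$, and only \emph{a posteriori} does one learn that every low-dimensional element lies in $L_0^-$. To rescue your quotient-by-ideal approach you would need to import something of this kind to prove your generation claim; as it stands, the lower stratum of your argument is a genuine gap.
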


\begin{proof}
The only subscaled lattice of sc-dimension $d<0$ is 
the one-element lattice $\{\ZERO\}$, so the result is trivial in this
case. Assume that $d\geq 0$ and that the result is proved for every 
$d'<d$ and every non-negative integer $n$. 

Let $L$ be a subscaled lattice of sc\--dimension $d$ generated by
elements $x_1,\dots,x_n$. Let $\Omega_n$ be the family of all subsets of
$\{1,\dots,n\}$ (so $\Omega_0=\{\emptyset\}$). For every $I\in \Omega_n$ let $I^c=\Omega_n\setminus I$ and
\[
  y_I = \Bigl(\mmeet_{i\in I} x_i\Bigr)
        - \Bigl(\jjoin_{i\in I^c} x_i\Bigr),
  \qquad
  z_I =\pc^d(y_I).
\]
The family of all $\cY_I = \bigcap_{i\in I} P(x_i) \cap \bigcap_{i\in I^c} P(x_i)^c$
is a partition of $\Sp(L)$. Indeed the $\cY_i$'s are the atoms 
of the boolean algebra generated in the power set $\cP(\Sp(L))$ 
by the $P(x_i)$'s. Moreover each $P(y_I)$ is the 
topological closure $\overline{\cY}_I$ of $\cY_I$ in $\Sp(L)$ 
hence for every $x\in L$
\[
  P(x)=\bigcup_{I\in\Omega_n}P(x)\cap \cY_I
  \subseteq\bigcup_{I\in\Omega_n}P(x)\cap \overline{\cY}_I
  =P\Bigl(\jjoin_{I\in\Omega_n}x\meet y_I\Bigr).
\]
So $x\leq \jjoin\limits_{I\in\Omega_n}(x\meet y_I)$ 
by Stone's duality. The reverse inequality being 
obvious we have proved that
\begin{equation}
  \label{eq:x-et-les-xi}
  \forall x\in L,\quad x = \jjoin\limits_{I\in\Omega_n}(x\meet y_I).
\end{equation}
In particular \ssref{SS:Ck(a-union-b)} also gives
\begin{equation}
  \label{eq:Cd-1-egale-union-des-Cd-zI}
  \pc^d(\UN)
  =\pc^d\Bigl(\jjoin\limits_{I\in\Omega_n}y_I\Bigr)
  =\jjoin\limits_{I\in\Omega_n}z_I.
\end{equation} 
For every $I\neq J\in\Omega_n$, if for example $I\not\subseteq J$ 
choose any $i\in I\setminus J$ and observe that 
$y_I\leq x_i$ and $y_J\leq \UN-x_i$ so $y_I\meet y_J\ll\UN-x_i$
by \tcref{TC:(x-y)-y}. By \ssref{SS:ll-scdim} and the 
$d$\--sc\--purity of the $z_I$'s it follows that
\begin{equation}
  \label{eq:zI-meet-zJ}
  \scdim z_I\meet z_J<d\hbox{\quad hence\quad}z_I-z_J=z_I.
\end{equation}
It follows from \ssref{SS:scdim-union}, \ssref{SS:a-moins-Cd(a)} 
and (\ref{eq:zI-meet-zJ}) above, that the element
\[
  a=\big(\UN-\pc^d(\UN)\big)\join
  \Big(\jjoin\limits_{I\in\Omega_n}(y_I-z_I)\Big)\join
  \Big(\jjoin\limits_{I\neq J\in\Omega_n}(z_I\meet z_J)\Big) 
\]
has sc\--dimension strictly smaller than $d$. So the induction
hypothesis applies to the $\lsc$\--substructure $L_0^-$ of $L(a)$
generated by the ${(y_I-z_I)}$'s and the ${(z_I\meet a)}$'s: $L_0^-$ is
finite, with at most $\mu(2|\Omega_n|,d-1)$ $\join$\--irreducible
elements. Note that $L_0^-$ is an $\lsc^*$\--substructure of $L$
(recall that $\lsc^*=\lsc\setminus\{\UN\}$). Finally let $L_1$ be the upper
semi-lattice generated in $L$ by $L_0^-\cup\{z_I\}_{I\in\Omega_n}$. By
construction $L_1$ is finite and $\cI(L_1)\subseteq\cI(L_0^-)\cup\{z_I\}_{I\in\Omega_n}$,
so $|\cI(L_1)|\leq 2^n+\mu(2^{n+1},d-1)=\mu(n,d)$. It is then sufficient to
show that $L_1=L$. 

We first prove that $L_1$ is a lattice.
By~(\ref{eq:Cd-1-egale-union-des-Cd-zI}), $\pc^d(\UN)\join a 
=\jjoin_{I\in\Omega_n}z_I\join a\in L_1$ hence $\UN=\pc^d(\UN)\join a\in L_1$. For every
$I\in\Omega_n$ and every $b\in L_0^-$, $z_I\meet b=(z_I\meet a)\meet b\in L_0^-$. For
every $I\neq J\in\Omega_n$, $z_I\meet z_J=(z_I\meet a)\meet(z_J\meet a)\in L_0^-$. So by the
distributivity law, $L_1$ is a sublattice of $L$. 

In order to conclude that $L_1$ is an $\lsc$\--substructure of $L$, by
Corollary~\ref{co:ss-lsc-structure} it only remains to check that for
every $b'<b\in\cI(L_1)$, $b$ is sc-pure in $L$ and $\scdim b'<\scdim b$
in $L$. Since $\cI(L_1)\subseteq\cI(L_0^-)\cup\{z_I\}_{I\in\Omega_n}$ we can distinguish
two cases.

{\it Case 1}: $b\in\cI(L_0^-)$. Then $b$ is sc-pure in $L_0^-$ by
\ssref{SS:Ck(a)-k-pure} hence
also in $L$ since $L_0^-$ is an $\lsc^*$\--substructure of $L$.
Similarly $b'\ll b$ in $L_0^-$ by \tcref{TC:a=(a-inter-b)-union-(a-b)}
that is $b-b'=b$ in $L_0^-$ hence also in $L$. Thus $b'\ll b$ in $L$
which implies that $\scdim b' < \scdim b$ in $L$ by
\ssref{SS:ll-scdim}. 

{\it Case 2}: $b=z_I$ for some $I\in\Omega_n$. Then $b=\pc^d(y_I)$ is sc-pure
in $L$ and $\scdim b=d$. If
$b'=z_J$ for some other $J\in\Omega_n$ then on one hand $\scdim(b')=d$ and on
the other hand $I\neq J$ hence $b'=b'\meet b=z_I\meet z_J$ has sc-dimension $<d$ by
(\ref{eq:zI-meet-zJ}), a contradiction. So necessarily $b'\in\cI(L_0^-)$, in
particular $b'\leq a$ hence $\scdim(b')\leq\scdim(a)<d$

So $L_1$ is indeed an $\lsc$\--substructure of $L$. Finally every
$y_I=(y_I-z_I)\join z_I\in L_1$ and (\ref{eq:x-et-les-xi})
gives, for every $i\leq n$,
\[
x_i=\jjoin_{I\in\Omega_n}x_i\meet y_I
\leq \jjoin_{{I\in\Omega_n}\atop{i\in I}}y_I\leq x_i.
\]
So equality holds, hence each $x_i\in L_1$, which finally proves that 
$L_1=L$. 
\end{proof}

\begin{corollary}
\label{cor:structures-a-n-generateurs}
For every $n,d$ there are finitely many non-isomorphic 
subscaled lattices of sc\--dimension $d$ generated by $n$ elements.
\end{corollary}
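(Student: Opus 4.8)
The plan is to deduce the finiteness of the number of isomorphism types directly from the uniform cardinality bound just obtained. A subscaled lattice of sc\--dimension $d$ is in particular a $d$\--subscaled lattice, so Theorem~\ref{th:TCS-treillis-type-fini} applies: if such a lattice $L$ is generated by $n$ elements then it is finite and its set of $\join$\--irreducible elements satisfies $|\cI(L)|\leq\mu(n,d)$, a bound depending only on $n$ and $d$.

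The first step is to turn this bound on $\cI(L)$ into a bound on $|L|$ itself. Since $L$ is finite, hence noetherian, every element is the join of its (finitely many) $\join$\--irreducible components, and this expression is unique. Thus the map sending an element to the set of its components is an injection of $L$ into the power set of $\cI(L)$, whence $|L|\leq 2^{\mu(n,d)}$. So the underlying set of every such $L$ has cardinality bounded by a quantity depending only on $n$ and $d$.

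The second step is a counting argument on $\lsc$\--structures carried by a fixed finite set. The language $\lsc$ involves the constants $\ZERO,\UN$, the binary symbols $\join,\meet,-$, and the unary symbols $\pc^i$; but axiom \ssun{} forces $\pc^i(a)=\ZERO$ for every $i>d$, so on a $d$\--subscaled lattice only $\pc^0,\dots,\pc^d$ can be non\--trivial. On a set of cardinality $s$ there are only finitely many ways to interpret these: $s$ choices for each constant, $s^{s^2}$ for each of the three binary operations, and $s^s$ for each of the $d+1$ relevant unary operations. Hence there are only finitely many $\lsc$\--structures carried by a given finite set.

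Combining the two steps, every subscaled lattice of sc\--dimension $d$ generated by $n$ elements is isomorphic to an $\lsc$\--structure on a set of cardinality at most $2^{\mu(n,d)}$, and there are only finitely many such structures up to isomorphism; the corollary follows. I do not expect any genuine obstacle here, since this is an easy consequence of the Local Finiteness Theorem; the only point deserving care is to observe, via \ssun{}, that only finitely many of the unary symbols $\pc^i$ are free to vary, so that the count of $\lsc$\--structures on a bounded set is indeed finite.
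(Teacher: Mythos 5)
Your proof is correct and follows essentially the same route as the paper: bound $|\cI(L)|$ by $\mu(n,d)$ via Theorem~\ref{th:TCS-treillis-type-fini}, then count the finitely many possible structures. Your explicit observation that \ssun{} leaves only $\pc^0,\dots,\pc^d$ free to vary (so that the infinitely many unary symbols of $\lsc$ cause no trouble) is a detail the paper leaves implicit, but it is the same argument.
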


\begin{proof}
Any such subscaled lattice $L$ is finite, with 
$|\cI(L)|\leq \mu(n,d)$ by Theorem~\ref{th:TCS-treillis-type-fini}. 
Clearly there are finitely many non-isomorphic lattices such that 
$|\cI(L)|\leq \mu(n,d)$ and each of them admits finitely 
many non-isomorphic $\lsc$\--structures of $d$\--subscaled lattices. 
\end{proof}

\section{Linear representation}
\label{se:lin-rep}

In this section we prove that the theory of $d$\--subscaled lattices
is the universal theory of various natural classes of geometric
$d$\--scaled lattices, including $\sdef(\cK,d)$ in
Example~\ref{ex:Ldef} as well as $\szar(K,d)$. The argument is based
on an elementary representation theorem for $d$\--subscaled lattices,
combined with the local finiteness result of
Section~\ref{se:local-finiteness}.

Given an arbitrary field $K$, a non-empty linear variety $X\subseteq K^m$ is
determined by the data of an arbitrary point $P\in X$ and the vector
subspace $\vect X$ of $K^m$, {\it via} the relation $X=P+\vect X$ (the
orbit of $P$ under the action of $\vect X$ by translation). We call
$X$ a {\df special linear variety} (resp. a {\df special linear set})
if $X$ is a linear variety such that $\vect X$ is generated by a
subset of the canonical basis of $K^m$ (resp. if $X$ is a finite union
of special linear varieties). The family $\llin(X)$ of all special
linear subsets of $X$ is the family of closed sets of a noetherian
topology on $X$, hence a noetherian lattice. For every $A\in\llin(X)$ we
let $D(A)$ be the dimension of $A$ in the sense of linear algebra.
This endows $\llin(X)$ with a natural
structure of scaled lattice as in Example~\ref{ex:Lnoeth}.

\begin{remark}\label{re:sclin-inclus-dans-scdef-et-sczar}
  For every $A\in\llin(X)$, if $K$ is infinite then $\scdim
  A=\dim_{\llin(X)} A=$ the dimension of $A$ as defined in linear
  algebra. It coincides with the Krull dimension as well. If moreover
  $A$ is $\join$\--irreducible in $\llin(X)$ then it is pure dimensional,
  hence it is sc\--pure both in $\llin(X)$ and $\lzar(X)$. By
  Proposition~\ref{pr:CNS-de-lsc-plongement} it follows that
  $\llin(X)$ is an $\lsc$\--substructure of $\lzar(X)$. Similarly if
  $\cK$ is a tame expansion of a topological field 
  then $\llin(X)$ is an $\lsc$\--substructure of $\ldef(X)$. 
\end{remark}

In what follows $K^m$ is identified with $K^m\times\{0\}^{r}\subseteq K^{m+r}$. The
very easy result below prepares the proof of
Proposition~\ref{pr:linear-repres}.

\begin{proposition}%
\label{pr:treillis-des-lineaires-speciaux}%
For every two special linear sets $C\subseteq B\subseteq K^m$ and every non-negative integer
$n\geq\dim C$ there exists a special linear set $A\subseteq K^{m+n}$ of pure
dimension $n$ such that $A\cap B=C$. 
\end{proposition}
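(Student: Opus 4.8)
The plan is to construct $A$ explicitly as a special linear set in $K^{m+n}$, by taking the union of $C$ (sitting inside $K^m\times\{0\}^n$) with enough extra special linear varieties in the new coordinate directions to pad every piece up to pure dimension $n$, while keeping the intersection with $B$ equal to $C$. The new coordinates $x_{m+1},\dots,x_{m+n}$ give us $n$ extra canonical basis vectors to play with, and the key point is that intersecting any such added piece with $B\subseteq K^m=K^m\times\{0\}^n$ forces the new coordinates to vanish, so the intersection with $B$ can be controlled component by component.

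First I would reduce to the irreducible case. Write $C=\bigcup_{j} C_j$ as the union of its $\join$\--irreducible components, each a special linear variety $C_j=P_j+\vect{C_j}$ with $\vect{C_j}$ spanned by a subset $S_j$ of the canonical basis $e_1,\dots,e_m$ of $K^m$, and with $\dim C_j=|S_j|\leq\dim C\leq n$. For each $j$ I would build a special linear variety $A_j\subseteq K^{m+n}$ of pure dimension exactly $n$ containing $C_j$, by adjoining to $\vect{C_j}$ precisely $n-|S_j|$ of the new basis vectors $e_{m+1},\dots,e_{m+n}$; concretely set $A_j=P_j+\mathrm{span}\big(S_j\cup\{e_{m+1},\dots,e_{m+n-|S_j|}\}\big)$, translated so that the new coordinates of the base point $P_j$ are $0$. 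Then each $A_j$ is a special linear variety of dimension $n$, and $A=\bigcup_j A_j$ is a special linear set of pure dimension $n$, since every irreducible component has dimension $n$.

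It then remains to verify $A\cap B=C$. The inclusion $C\subseteq A\cap B$ is immediate, since $C\subseteq B$ by hypothesis and $C_j\subseteq A_j\subseteq A$ by construction (the base point $P_j$ has vanishing new coordinates, so $C_j=P_j+\vect{C_j}$ lies in $A_j$). For the reverse inclusion, take a point $Q\in A\cap B$. Since $Q\in B\subseteq K^m\times\{0\}^n$, all its new coordinates vanish. Since $Q\in A$, it lies in some $A_j$, so $Q-P_j\in\mathrm{span}\big(S_j\cup\{e_{m+1},\dots\}\big)$; projecting away the new coordinates and using that they already vanish in $Q$, I get $Q-P_j\in\vect{C_j}=\mathrm{span}(S_j)$, whence $Q\in P_j+\vect{C_j}=C_j\subseteq C$.

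The main obstacle I anticipate is purely bookkeeping rather than conceptual: ensuring that the added piece $A_j$ really is a \emph{special} linear variety (its direction spanned by a subset of the canonical basis of $K^{m+n}$, not merely an affine subspace) and that the base points are chosen with zero new coordinates so that the projection argument in the last paragraph goes through cleanly. The inequality $n\geq\dim C$ is exactly what guarantees $n-|S_j|\geq 0$ for every component, so that we always have enough new basis vectors to pad with; without it the construction would fail. No deeper geometric input is needed, which is consistent with the author's remark that the result is ``very easy.''
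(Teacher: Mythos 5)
Your proposal is correct and is essentially the paper's own proof: decompose $C$ into special linear varieties, pad each one with $n-|S_j|$ of the new canonical basis vectors to reach pure dimension $n$, and observe that intersecting with $K^m\times\{0\}^n$ kills the new coordinates so that $A\cap K^m=C$, hence $A\cap B=C$. The verification via the base points having vanishing new coordinates is exactly the intended (and valid) argument.
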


\begin{proof}
The result being rather trivial if $C$ is empty, we can assume
w.l.o.g. that $C\neq\emptyset$. Let $(e_1,\dots,e_{m+n})$ be the
canonical basis of $K^{m+n}$. If $I$ is a subset of $\{1,\dots,m+n\}$ we let
$\vect E(I)$ denote the vector space generated in $K^n$ by $(e_i)_{i\in
I}$. Decompose $C$ as a union of special linear varieties $C_1,\dots,C_p$,
and write each $C_i=P_i+\vect E(J_i)$ with $|J_i|=\dim C_i\leq n$. Let
$I_i=J_i\cup\{m+1,\dots,m+n-|J_i|\}$ and $A_i=P_i+\vect E(I_i)$ for every $i\leq
p$. Finally let $A=A_1\cup\dots\cup A_p$. By construction each $A_i$ has pure
dimension $|I_i|=n$, hence $A$ has pure dimension $n$. Clearly each
$A_i\cap K^m=C_i$, hence $A\cap K^m=C$ and {\it a fortiori} $A\cap B=C$.
\end{proof}

\begin{proposition}[Linear representation]\label{pr:linear-repres}
Let $K$ be an infinite field, $d\geq0$ an integer and $L$ a finite
$d$\--subscaled lattice. Then there exists a special linear set $X$
over $K$ of dimension $\leq d$ and an $\lsc$\--embedding $\varphi:L\to\llin(X)$. 
\end{proposition}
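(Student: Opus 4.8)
The plan is to prove this by induction on the dimension $d$, building the special linear set $X$ so that the $\join$\--irreducible elements of $L$ are faithfully realized as special linear varieties of the correct pure dimension, and then invoking Proposition~\ref{pr:CNS-de-lsc-plongement} to upgrade the resulting $\llat$\--embedding into an $\lsc$\--embedding. The base case $d<0$ is trivial (the lattice is $\{\ZERO\}$), and for $d=0$ a finite $0$\--subscaled lattice is a finite boolean algebra, which is easily represented by a finite set of points. For the inductive step, I would first isolate the top\--dimensional part of $L$. By \ssref{SS:a-moins-Cd(a)}, writing $a'=a-\pc^d(\UN)$ for each $a$ gives a projection onto the sublattice $L'$ of elements of sc\--dimension $<d$; by \ssref{SS:scdim-max-k} and the axioms this $L'$ is a $(d-1)$\--subscaled lattice, to which the induction hypothesis applies, yielding a special linear set $X'\subseteq K^{m'}$ of dimension $\leq d-1$ and an $\lsc$\--embedding $\varphi':L'\to\llin(X')$.

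The heart of the argument is then to attach, to each $d$\--sc\--pure $\join$\--irreducible component of $L$, a special linear variety of pure dimension $d$, glued along $X'$ in a way that records exactly the incidences already encoded by $\varphi'$. Concretely, let $c_1,\dots,c_r$ enumerate the $\join$\--irreducible elements of $L$ of sc\--dimension $d$. For each such $c_j$, its ``boundary'' $c_j-\pc^d(c_j)$ — or more precisely the trace $c_j\meet a$ on the lower\--dimensional skeleton $a=\UN-\pc^d(\UN)$ — is an element of $L'$, hence corresponds under $\varphi'$ to a special linear set $C_j=\varphi'(c_j\meet a)\subseteq X'$ of dimension $<d\leq d$. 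I would then apply Proposition~\ref{pr:treillis-des-lineaires-speciaux} with $n=d$, $B=X'$ and $C=C_j$ (after the identification $K^{m'}=K^{m'}\times\{0\}^{d}\subseteq K^{m'+d}$) to produce a special linear set $A_j\subseteq K^{m'+d}$ of pure dimension $d$ with $A_j\cap X'=C_j$. Taking $X=X'\cup A_1\cup\cdots\cup A_r$ and defining $\varphi(c_j)=A_j$, while extending by $\varphi(a')=\varphi'(a')$ on the lower part, should give a well\--defined lattice map; the main point is that $X$ is again a special linear set of dimension $\leq d$.

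The main obstacle I anticipate is checking that the map $\varphi$ so defined is genuinely an $\llat$\--embedding, that is, that it is injective and preserves $\ZERO,\UN,\join,\meet$. Preservation of joins is built into the construction since every element of $L$ is the join of its $\join$\--irreducible components and $\varphi$ is defined on those; the delicate part is preservation of meets, which reduces to verifying that the incidences between the top\--dimensional pieces $A_j$ and between the $A_j$ and $X'$ exactly mirror the corresponding meets in $L$. This is precisely where the hypotheses $A_j\cap X'=C_j=\varphi'(c_j\meet a)$ and the freedom in Proposition~\ref{pr:treillis-des-lineaires-speciaux} to place each $A_j$ in fresh coordinates $\{m'+1,\dots,m'+d\}$ pay off: distinct $A_j$'s can be arranged to meet only inside $X'$, so that $A_i\cap A_j=C_i\cap C_j=\varphi'((c_i\meet a)\meet(c_j\meet a))=\varphi'(c_i\meet c_j)$, using \ssref{SS:Ci-inter-Cj} to guarantee $c_i\meet c_j$ has sc\--dimension $<d$ and hence lies in $L'$. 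Injectivity then follows because $\varphi'$ is injective on $L'$ and the newly introduced pure\--$d$ pieces are pairwise distinct and distinct from everything in $X'$.

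Once $\varphi$ is known to be an $\llat$\--embedding, the final step is immediate: by construction each $A_j=\varphi(c_j)$ is a special linear set of pure dimension $d$, hence sc\--pure of sc\--dimension $d$ in $\llin(X)$ by Remark~\ref{re:sclin-inclus-dans-scdef-et-sczar}, matching the sc\--dimension of $c_j$ in $L$; and the lower\--dimensional $\join$\--irreducibles inherit the correct sc\--purity and sc\--dimension from $\varphi'$ being an $\lsc$\--embedding into $\llin(X')\subseteq\llin(X)$. Since $\varphi$ preserves sc\--purity and sc\--dimension on all of $\cI(L)$, Proposition~\ref{pr:CNS-de-lsc-plongement} (applicable because $L$ is finite, hence noetherian) guarantees that $\varphi$ is an $\lsc$\--embedding, completing the induction.
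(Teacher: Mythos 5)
Your overall strategy --- represent a lower-dimensional part by induction, then glue on special linear pieces of pure dimension $d$ via Proposition~\ref{pr:treillis-des-lineaires-speciaux} and conclude with Proposition~\ref{pr:CNS-de-lsc-plongement} --- is close in spirit to the paper's proof (which inducts on $\card\cI(L)$, peeling off one maximal $\join$\--irreducible at a time). But your choice of lower-dimensional skeleton is wrong, and this breaks the construction. The element $\UN-\pc^d(\UN)=\jjoin_{i<d}\pc^i(\UN)$ is the join of the $\join$\--irreducible \emph{components} of $\UN$ of sc\--dimension $<d$; it does not dominate all $\join$\--irreducible \emph{elements} of sc\--dimension $<d$. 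Concretely, take $d=1$ and $L$ with $\cI(L)=\{c_0<c_1\}$, $D(c_0)=0$, $D(c_1)=1$ as in Example~\ref{ex:Lnoeth} (geometrically: a point lying on a line). Here $\pc^1(\UN)=c_1=\UN$, so your skeleton $a=\UN-\pc^d(\UN)$ equals $\ZERO$ and your gluing trace $C_1=\varphi'(c_1\meet a)$ is empty; the piece $A_1$ you attach then satisfies $A_1\cap X'=\emptyset$, whereas $c_0<c_1$ forces $\varphi(c_0)\subseteq\varphi(c_1)=A_1$. So the map you build does not even preserve the order, let alone meets. The correct gluing datum for a top-dimensional irreducible $c_j$ is its predecessor $c_j^-=\jjoin\{x\in L\tq x<c_j\}$, equivalently $c_j\meet\UN_{L'}$ where $L'$ is the set of \emph{all} elements of sc\--dimension $<d$ with its own top element; this lies in $L'$ by \ssref{SS:ll-scdim} but is in general strictly larger than $c_j\meet\bigl(\UN-\pc^d(\UN)\bigr)$. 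It is exactly the datum $c=a\meet b$ used in the paper, where $b$ is the join of all the other irreducibles.

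A second, smaller gap: even with the traces corrected, attaching all the $A_j$'s at once requires an argument that $A_i\cap A_j\subseteq K^{m'}$ for $i\neq j$. Applying Proposition~\ref{pr:treillis-des-lineaires-speciaux} with the same block of $d$ fresh coordinates for every $j$ does not guarantee this: two special linear varieties through different base points that share a fresh direction and some old directions can meet outside $K^{m'}$. You need pairwise disjoint blocks of new coordinates for the different $A_j$'s, or, as in the paper, to add one irreducible at a time so that the question never arises. With these two repairs your argument goes through; the reduction of the $d=0$ case to Boolean algebras and the final appeal to Proposition~\ref{pr:CNS-de-lsc-plongement} are exactly right.
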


\begin{proof}
By induction on the number $r$ of $\join$\--irreducible elements
of an arbitrary $d$\--subscaled lattice $L$, we prove that there
exists an $\lsc^*$\--embedding $\varphi$ of $L$ into $\llin(K^m)$ for some
$m$ depending on $L$. Taking $X=\varphi(\UN_L)$ then gives the conclusion.
Indeed $X$ is a special linear set over $K$, $\dim X=\scdim(\UN_L)\leq d$
because $\varphi$ preserves the sc-dimension, and $\varphi$ is obviously an
$\lsc$\--embedding of $L$ into $\llin(X)$.

If $r=0$ then $L$ is the one-element lattice $\{\ZERO\}$, hence an
$\lsc^*$\--substructure of $\llin(K)$. So, given a fixed $r\geq 1$, we
can assume by induction that the result is proved for $r-1$. Let $L$ be a
$d$\--subscaled lattice with $\join$\--irreducible elements $a_1,\dots,a_r$.
Let $a=a_r$ and $b=\jjoin_{1\leq i<r}a_i$.

Renumbering if necessary we may assume that $a_r$ is maximal among the
$a_i$'s. By maximality, the
$\join$\--irreducible elements of $L(b)$ are $a_1,\dots,a_{r-1}$. Let $c=a\meet b$
and $\varphi$ an $\lsc^*$\--embedding of $L(b)$ into some $\llin(K^m)$ given
by the induction hypothesis. Since $a$ is $\join$-irreducible in $L$ it is
sc\--pure. Moreover $c\leq a$ by \tcref{TC:a=(a-inter-b)-union-(a-b)},
hence $a$ has pure sc\--dimension $n$ for some $n\geq\scdim(c)$ by
\ssref{SS:ll-scdim}. Let $B,C$ be the respective images of $b,c$ by
$\varphi$. Proposition~\ref{pr:treillis-des-lineaires-speciaux} gives a
special linear set $A\subseteq K^{m+n}$ of pure dimension $n$ such that $A\cap
B=C$. Identifying $K^m$ with $K^m\times\{0\}^n\subseteq K^{m+n}$ turns $\varphi$ into an
$\lsc^*$\--embedding of $L(b)$ into $\llin(K^{m+n})$.

Every  $x\in L$ can be written uniquely as $x_a\join x_b$ with $x_a\in\{\ZERO,a\}$ and
$x_b\in L(b)$ by grouping appropriately the $\join$\--irreducible components
of $x$. So we can let
\[
  \bar\varphi(x)=
  \left\{
  \begin{array}{cl}
    \varphi(x_b)      & \hbox{if $x_a=\ZERO$,}\\
    A\cup\varphi(x_b) & \hbox{if $x_a=a$.}
  \end{array}
  \right.
\]
This is a well-defined $\llat^*$\--embedding of $L$ into
$\llin(K^{m+n})$. Moreover $\bar\varphi$  is an $\lsc^*$\--embedding by
Proposition~\ref{pr:CNS-de-lsc-plongement}. This finishes the
induction. 
\end{proof}

Given an infinite field $K$ and positive integer $d$ 
let $\slin(K,d)$ be the class of $d$-scaled lattices $\llin(X)$ 
with $X$ ranging over the special linear sets over $K$ of dimension 
at most $d$. 

\begin{theorem}
  The universal theories of $\sdef(\cK,d)$ (resp. $\szar(K,d)$,
  $\slin(K,d)$) in the language $\lsc$ are the same for every fixed
  integer $d\geq0$ and every tame expansion $\cK$ of a topological
  field $K$ (resp. for every infinite field $K$). This is the theory
  of $d$\--subscaled lattices.
\end{theorem}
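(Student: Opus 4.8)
The plan is to prove the three coincidences by a squeeze argument, exploiting that universal sentences are preserved by passage to $\lsc$\--substructures while existential sentences are carried upward along $\lsc$\--embeddings. Write $T_d$ for the theory of $d$\--subscaled lattices. Since its axioms \ssun{}--\ssref{SS:ll-scdim} are all universal, $T_d$ coincides with its own universal part, so it suffices to show that the universal theory of each of $\sdef(\cK,d)$, $\szar(K,d)$ and $\slin(K,d)$ equals $T_d$. First I would record the easy inclusion: by Proposition~\ref{pr:geom-scaled} every member of the three classes is a $d$\--scaled, hence $d$\--subscaled, lattice, so each is a model of $T_d$; consequently every universal consequence of $T_d$ holds throughout each class, which gives that $T_d$ is contained in the universal theory of each of them.

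Next I would compare the classes among themselves. By Remark~\ref{re:sclin-inclus-dans-scdef-et-sczar}, every $\llin(X)$ is an $\lsc$\--substructure of the corresponding $\lzar(X)$, and (for tame $\cK$) of $\ldef(X)$. Thus every member of $\slin(K,d)$ embeds into a member of $\szar(K,d)$ and into a member of $\sdef(\cK,d)$. Since a universal sentence true in a structure remains true in each of its $\lsc$\--substructures, this yields that the universal theory of $\szar(K,d)$ and that of $\sdef(\cK,d)$ are each contained in the universal theory of $\slin(K,d)$.

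It therefore remains to prove the reverse inclusion for the smallest class, namely that the universal theory of $\slin(K,d)$ is contained in $T_d$, and this is the crux. Arguing contrapositively, suppose a universal sentence $\phi$ is \emph{not} a consequence of $T_d$; then its negation, an existential sentence $\exists\bar x\,\psi(\bar x)$ with $\psi$ quantifier-free, holds in some $d$\--subscaled lattice $L$, witnessed by a tuple $\bar a$. The $\lsc$\--substructure $L_0$ of $L$ generated by $\bar a$ is a finitely generated $d$\--subscaled lattice, hence \emph{finite} by the Local Finiteness Theorem~\ref{th:TCS-treillis-type-fini}; as $\psi$ is quantifier-free and $\bar a\in L_0$, the sentence $\exists\bar x\,\psi$ still holds in $L_0$. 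Now the Linear Representation Proposition~\ref{pr:linear-repres} applies to the \emph{finite} lattice $L_0$ and provides an $\lsc$\--embedding of $L_0$ into some $\llin(X)$, with $X$ a special linear set of dimension $\leq d$ over $K$, i.e. into a member of $\slin(K,d)$. Because $\lsc$\--embeddings preserve existential sentences, $\exists\bar x\,\psi$ holds in $\llin(X)$, so $\phi$ fails there, and hence $\phi$ is not in the universal theory of $\slin(K,d)$.

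Combining the three inclusions gives $T_d\subseteq(\text{univ. theory of }\sdef)\subseteq(\text{univ. theory of }\slin)\subseteq T_d$, and likewise with $\szar$ in place of $\sdef$; all three universal theories are therefore equal, and equal to $T_d$. The main obstacle is precisely this reverse inclusion: the linear representation is only available for finite (more generally noetherian) lattices, so without Theorem~\ref{th:TCS-treillis-type-fini} one could not replace an arbitrary witnessing $d$\--subscaled lattice by a finite one to which Proposition~\ref{pr:linear-repres} applies. Local finiteness is exactly the ingredient that reduces the general statement to the finite case handled by the representation theorem.
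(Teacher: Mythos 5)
Your proof is correct and follows essentially the same route as the paper: both reduce to the finite case via the Local Finiteness Theorem~\ref{th:TCS-treillis-type-fini}, then apply the Linear Representation Proposition~\ref{pr:linear-repres}, and use Remark~\ref{re:sclin-inclus-dans-scdef-et-sczar} to pass between the three classes. The only (cosmetic) difference is that the paper invokes the model-theoretic compactness theorem to embed an arbitrary $d$\--subscaled lattice into a model of the theory of $\slin(K,d)$, whereas you argue sentence-by-sentence on the finite substructure generated by the witnesses, which avoids compactness but yields the same conclusion.
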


\begin{proof}
  As explained in Section~\ref{se:appendix}, for every such expansion
  $\cK$ of $K$ the good properties of the dimension theory for
  definable sets $X\subseteq K^m$ ensure that $\ldef(X)$ is a $d$\--scaled
  lattice. Obviously the same holds true for $\llin(X)$ and
  $\lzar(X)$. So the universal theory of any of these classes contains
  the theory of $d$\--subscaled lattices. For the converse, thanks to
  Remark~\ref{re:sclin-inclus-dans-scdef-et-sczar} it suffices to
  prove that every $d$\--subscaled lattice $L$ embeds into a model of
  the theory of $\slin(K,d)$. If $L$ is finite this is
  Proposition~\ref{pr:linear-repres}. The general case then follows
  from the model-theoretic compactness theorem, because $L$ is locally
  finite by Theorem~\ref{th:TCS-treillis-type-fini}.
\end{proof}

\section{Minimal extensions}
\label{se:minimal-extensions}

Minimal proper extensions\footnote{When we talk about
  an {\df extension} $L$ of a lattice, a co-Heyting algebra or a subscaled
  lattice $L_0$, it is always understood that $L$ is also a lattice, a
  co-Heyting algebra or a subscaled lattice
  respectively.\label{fn:extension}} of any finite subscaled lattices are
entirely determined by so-called ``SC\--signatures'' (see below).
Since this is a special case of minimal extensions of finite
co-Heyting algebras, we first recall the main results of
\cite{darn-junk-2018} on this subject, and try to reduce to them
as much as possible. 

We need some specific notation and definitions. Given a finite lattice
$L_0$, a $\llat$\--extension $L$, elements $a\in L_0$ and $x\in L$ we
write:
\begin{itemize}
\item
$a^-=\jjoin\{b\in L_0\tq b<a\}$.
\item
$g(x,L_0)=\mmeet\{a\in L_0\tq x\leq a\}$. 
\end{itemize}
Clearly $a\in\cI(L_0)$ if and only if $a^-$ is the unique 
predecessor of $a$ in $L_0$ (otherwise $a^-=a$). 

Assume that $L_0$ and $L$ are co-Heyting algebras (or topologically
complemented lattices, or TC\--lattice for short). A {\df
TC\--signature} in $L_0$ is a triple $(g,H,r)$ where $g\in\cI(L_0)$, $H$
is a set of one or two elements $h_1,h_2\in L_0$ and $r\in\{1,2\}$ are such
that:
\begin{itemize}
  \item
    either $r=1$ and $h_1=h_2<g$;
  \item 
    or $r=2$ and $h_1\join h_2$ is the unique predecessor of $g$.
\end{itemize}
A couple $(x_1,x_2)$ of non-zero  elements
of $L$ is {\df TC\--primitive over} $L_0$ if there is $g\in \cI(L_0)$
such that 
\begin{description}
  \item[P1]
    $g^-\meet x_1$ and $g^-\meet x_2$ belong to $L_0$.
  \item[P2]
    One of the following happens: 
    \begin{enumerate}
        \item
          $x_1=x_2$ and $g^-\meet x_1\ll x_1 \ll g$.
        \item
          $x_1\neq x_2$, $x_1\meet x_2\in L_0$ and $g-x_1=x_2$, $g-x_2=x_1$.
      \end{enumerate}
\end{description}
This implies that each $x_i\nin L_0$, that $g=g(x_1,L_0)=g(x_2,L_0)$
and that the triple $\sigma_\TC(x_1,x_2)=(g,H,r)$ defined as follows is a
SC\--signature in $L_0$, called  the {\df SC\--signature of
$(x_1,x_2)$ in $L_0$}. 
\begin{align*}
  g=g(x_1,L_0) & & H=\{g^-\meet x_1,g^-\meet x_2\} & & r=\card\{x_1,x_2\}
\end{align*}
Finally we say that $L$ is a {\df TC\--primitive extension} of $L_0$
if it is $\ltc$\--generated over $L_0$ by a TC\--primitive couple. For the
convenience of the reader we collect here all the properties of
TC\--signatures and TC\--primitive extensions that we are going
to use. 

We will refer to the $k$\--th item of the next proposition as 
Proposition~\ref{pr:recap-DJ}.$k$.

\begin{proposition}[\cite{darn-junk-2018}]\label{pr:recap-DJ}
  Let $L_0$ be a finite co-Heyting algebra and $L$ an
  $\ltc$\--extension\footnote{See Footnote~\ref{fn:extension}}. 
  \begin{enumerate}
    \item\label{it:recap-Irr}
      (\cite[Theorem~3.3]{darn-junk-2018})
      If $L$ is $\ltc$\--generated over $L_0$ by a TC\--primitive
      tuple $(x_1,x_2)$, then $L$ is exactly the upper semi-lattice
      generated over $L_0$ by $x_1$ and $x_2$. It is a finite
      co-Heyting algebra and one of the following holds: 
      \begin{enumerate}
        \item
          $x_1=x_2$ and $\cI(L)=\cI(L_0)\cup\{x_1\}$.
        \item
          $x_1\neq x_2$ and $\cI(L)=\left(\cI(L_0)\setminus\{g\}\right)\cup\{x_1,x_2\}$.
      \end{enumerate}
    \item\label{it:recap-sgn-ext}
      (\cite[Remark~3.6]{darn-junk-2018})
      The TC\--signatures in $L_0$ and the TC\--primitive extensions of $L_0$
      are in one-to-one correspondence: every TC\--signature in $L_0$
      is the TC\--signature of a TC\--primitive extension, and two
      TC\--primitive extensions of $L_0$ are $\ltc$\--isomorphic over
      $L_0$ if and only if they have the same TC\--signature in $L_0$. 
    \item\label{it:recap-prim-min}
      (\cite[Corollary~3.4]{darn-junk-2018})
      If $L$ is finite, the following are equivalent. 
      \begin{enumerate}
        \item\label{it:recap-min}
          $L$ is a minimal proper extension of $L_0$.
        \item\label{it:recap-prim}
          $L$ is a TC\--primitive extension of $L_0$.
        \item\label{it:recap-card}
          $\card(\cI(L))=\card(\cI(L_0))+1$.
      \end{enumerate}
      As a consequence every finite $\ltc$\--extension $L'$ of $L_0$ is the
      union of a tower of TC\--primitive extensions $L_0\subset L_1\subset\cdots\subset
      L_n=L'$ with $n=\card(\cI(L'))-\card(\cI(L_0))$. 
  \end{enumerate}
\end{proposition}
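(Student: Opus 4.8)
The plan is to read this proposition as a toolkit of facts about \emph{finite} co-Heyting algebras and to reprove each item from the $\rm TC$-calculus (rules $\rm TC_1$ through $\rm TC_4$) together with the finite Esakia duality mentioned before Fact~\ref{fa:duality}, which identifies a finite co-Heyting algebra $L_0$ with the lattice $\cD^\downarrow(\cI(L_0))$ of decreasing subsets of its poset of $\join$-irreducibles. I would establish item~\ref{it:recap-Irr} first, since items~\ref{it:recap-sgn-ext} and \ref{it:recap-prim-min} both rely on the explicit shape of $L$ that it provides.

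For item~\ref{it:recap-Irr}, let $M$ be the upper semi-lattice generated over $L_0$ by $x_1,x_2$, so that every element of $M$ is one of $a$, $a\join x_1$, $a\join x_2$, $a\join x_1\join x_2$ with $a\in L_0$. First I would check that $M$ is closed under $\meet$: since $x_i\leq g=g(x_i,L_0)$ with $g\in\cI(L_0)$, for $a\in L_0$ one gets $x_i\meet a=x_i$ when $a\geq g$ and $x_i\meet a=(x_i\meet g^-)\meet a\in L_0$ otherwise (using P1 and irreducibility of $g$), while $x_1\meet x_2$ is controlled by P2. Next I would show $M$ is closed under the difference computed in $L$: by $\rm TC_2$ and $\rm TC_4$ it is enough to evaluate $x_i-a$, $a-x_i$ and $x_i-x_j$, and each lands in $M$ via $a-b=a-(a\meet b)$, the implication $b\ll a\Rightarrow a-b=a$, and---in case~2---the relations $g=x_1\join x_2$, $x_1-x_2=x_1$, $x_2-x_1=x_2$ forced by P2 and $\rm TC_3$ (the representative nontrivial value being $a-x_1=(a-g)\join x_2$ for $a\geq g$, read off by Stone duality). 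Hence $M=L$, and the $\join$-irreducible count follows by inspection: in case~1 the new element $x_1$ is $\join$-irreducible (the join of the elements strictly below it is bounded by $g^-\meet x_1\ll x_1$) and every old irreducible persists, whereas in case~2 one has $g=x_1\join x_2$ with $x_1,x_2$ irreducible, so $g$ is lost and $x_1,x_2$ gained.

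Item~\ref{it:recap-sgn-ext} splits into existence and uniqueness. For existence I would realize a prescribed $\rm TC$-signature $(g,H,r)$ on the dual side: in $\cI(L_0)$ either insert a single new point below $g$ and above the part of the poset carried by $h_1=h_2$ (case $r=1$), or split $g$ into two points lying over $h_1$ and $h_2$ (case $r=2$), then read off the co-Heyting algebra via $\cD^\downarrow$ and verify that its distinguished couple is $\rm TC$-primitive with that signature. Uniqueness is then formal: by item~\ref{it:recap-Irr} any $\rm TC$-primitive extension is the upper semi-lattice on $L_0\cup\{x_1,x_2\}$, and the signature determines every product $x_i\meet a$ and every difference $x_i-a$, $a-x_i$, $x_i-x_j$; so $x_i\mapsto x_i'$ extends to a bijection respecting $\join,\meet,-$, i.e. a $\ltc$-isomorphism over $L_0$. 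In item~\ref{it:recap-prim-min}, the implication (b)$\Rightarrow$(c) is immediate from item~\ref{it:recap-Irr}; and (c)$\Rightarrow$(a) comes from duality, since a proper $\ltc$-embedding of finite co-Heyting algebras dualizes to a surjective $p$-morphism $\cI(L)\to\cI(L_0)$ that is not injective, so a proper extension strictly raises $\card(\cI)$ and a gap of $1$ forbids any intermediate algebra. The final tower statement then follows by induction on $\card(\cI(L'))-\card(\cI(L_0))$, peeling off inside $L'$ one minimal (hence, by the equivalence, $\rm TC$-primitive) extension at a time.

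The genuinely delicate step I expect is (a)$\Rightarrow$(b): fabricating a $\rm TC$-primitive couple from an \emph{arbitrary} generator $x$ of a minimal extension. Putting $g=g(x,L_0)$, one must first secure $g^-\meet x\in L_0$, then decide between the two clauses of P2 by comparing $x$ with $g^-$ and with its would-be complement $g-x$, and finally replace $x$ by the appropriate couple $(x_1,x_2)$ so that P1 and P2 hold verbatim while the couple still $\ltc$-generates $L$. This case analysis is exactly what Corollary~3.4 of \cite{darn-junk-2018} supplies, and it is where the real work of the proposition sits.
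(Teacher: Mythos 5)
The first thing to say is that the paper contains no proof of this proposition: it is imported wholesale from \cite{darn-junk-2018}, each item being tagged with its source there (Theorem~3.3, Remark~3.6, Corollary~3.4), and the surrounding text says explicitly that these results are being ``recalled'' so that the SC\--case can be reduced to them. So your reconstruction is being compared against a citation, not an argument. That said, the route you choose --- the $\mathrm{TC}$\--calculus plus finite duality on the poset of $\join$\--irreducibles --- is the natural one, it is what Fact~\ref{fa:duality} was put in the paper for, and the outline is sound. Two small repairs in item~\ref{it:recap-Irr}: the ``nontrivial values'' of $-$ should be obtained by direct $\mathrm{TC}$\--computation rather than ``read off by Stone duality'', since at that stage the poset $\cI(L)$ is exactly what is not yet known (the appeal is mildly circular as written). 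They do come out: for $a\geq g$ in case~2, $a=g\join(a-g)$ gives $a-x_1=(g-x_1)\join\big((a-g)-x_1\big)=x_2\join(a-g)$ because $(a-g)-x_1=a-(g\join x_1)=a-g$ by $\mathrm{TC}_4$; and $x_1-x_2=(g-x_2)-x_2=g-x_2=x_1$ by $\mathrm{TC}_3$, which is what makes $x_1,x_2$ $\join$\--irreducible and $g=x_1\join x_2$ no longer so. Your treatment of items~\ref{it:recap-sgn-ext} and of (\ref{it:recap-prim})$\Rightarrow$(\ref{it:recap-card})$\Rightarrow$(\ref{it:recap-min}) is correct.

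The one genuine gap is the one you name yourself: (\ref{it:recap-min})$\Rightarrow$(\ref{it:recap-prim}) is not proved but delegated to Corollary~3.4 of \cite{darn-junk-2018}. That implication is the substantive content of item~\ref{it:recap-prim-min} --- the other two arrows are easy, as your own argument shows --- so deferring it to the result being reproved leaves the proposal incomplete precisely where, in your words, ``the real work sits''. The missing argument is the one the paper itself rehearses later in the SC\--setting (proof of Proposition~\ref{pr:SC-irr-sign-min}, item~\ref{it:SC-prim-min}): choose $x_1$ minimal in $\cI(L)\setminus\cI(L_0)$, put $g=g(x_1,L_0)$, set $x_2=x_1$ if $x_1\ll g$ and $x_2=g-x_1$ otherwise; minimality of $x_1$ forces $g^-\meet x_1$, a join of $\join$\--irreducibles of $L$ strictly below $x_1$ and hence lying in $\cI(L_0)$, to belong to $L_0$, which yields {\bf P1} and then {\bf P2} by the dichotomy $x_1\ll g$ or not; finally the $\ltc$\--substructure generated by $L_0\cup\{x_1,x_2\}$ is a proper extension of $L_0$ inside $L$, so it equals $L$ by minimality. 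Without writing out that verification, your proposal establishes items~\ref{it:recap-Irr} and~\ref{it:recap-sgn-ext} but only two of the three implications in item~\ref{it:recap-prim-min}.
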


If $L$ is a TC\--primitive extension of a finite co-Heyting algebra
$L_0$, by Proposition~\ref{pr:recap-DJ}.\ref{it:recap-Irr} it is
$\ltc$\--generated over $L_0$ by a {\em unique} (up to permutation)
TC\--primitive tuple $(x_1,x_2)$. We then call $\sigma_\TC(x_1,x_2)$ the
{\df TC\--signature of $L$ in $L_0$} and denote it $\sigma_\TC(L)$.

Now let $L_0$ be a finite subscaled lattice and $L$ a
$\lsc$\--extension. 
A {\df SC\--signature} in $L_0$ is a triple $\sigma=(g,H,q)$ where
$g\in\cI(L_0)$, $H$ is a set of one or two elements $h_1,h_2\in L_0$ and
$q\in\NN$ are such that:
\begin{itemize}
  \item
    either $\scdim h_1<q<\scdim g$ and $h_1=h_2<g$;
  \item 
    or $q=\scdim g$ and $h_1\join h_2=g^-$.
\end{itemize} 
Let $r_\sigma=1$ if $q<\scdim g$, $r_\sigma=2$ if $q=\scdim g$, and
$\sigma^\TC=(g,H,r_\sigma)$. By construction this is a TC\--signature in $L_0$.
Given a $\lsc$\--extension $L$ of $L_0$, a tuple $(x_1,x_2)$ of
elements of $L$ is {\df SC\--primitive over $L_0$} if it is TC\--primitive
over $L_0^\TC$ and if in addition
\begin{description}
  \item[P3]
    $x_1$, $x_2$ are sc-pure of the same sc-dimension.
\end{description}
Such a SC\--primitive couple $(x_1,x_2)$ determines its so-called {\df
SC\--signature in $L_0$}, denoted by $\sigma_\SC(x_1,x_2)=(g,H,q)$
and defined as follows.
\begin{align*}
  g=g(x_1,L_0) & & H=\{g^-\meet x_1,g^-\meet x_2\} & & q=\scdim x_1
\end{align*}
Note that, by condition {\bf P2} of the definition of TC\--signatures,
$x_1=x_2$ if and only if $x_1\ll g$, and otherwise $\scdim x_1=\scdim
x_2=\scdim g$. This ensures that
$\sigma_\TC(x_1,x_2)=(\sigma_\SC(x_1,x_2))^\TC$. 

Let $L_0^\TC$ and $L^\TC$ denote the respective $\ltc$\--reducts of
$L_0$ and $L$. For every subset $X_0$ of $L$ we let:
\begin{itemize}
  \item
    $L_0\gen{X_0} =$ the $\lsc$\--structure generated by $L_0\cup X_0$ in $L$;
  \item 
    $L_0^\TC\gen{X_0}=$ the $\ltc$\--structure generated by $L_0^\TC\cup
    X_0$ in $L^\TC$.
\end{itemize}
We say that $L$ is a {\df SC-primitive extension} of $L_0$, if there exists
a tuple $(x_1,x_2)$ SC-primitive over $L_0$ such that $L=L_0\gen{x_1,x_2}$ (then
clearly $L=L_0\gen{x_1}=L_0\gen{x_2}$). By Lemma~\ref{le:SC-reduit}
below and Proposition~\ref{pr:recap-DJ}.\ref{it:recap-Irr} such a
tuple is necessarily unique. 

\begin{lemma}\label{le:SC-reduit}
  Let $L_0$ be finite subscaled lattice, and $L$ a $\lsc$\--extension%
  \footnote{See Footnote~\ref{fn:extension}}
  generated over $L_0$ by an SC\--primitive tuple $(x_1,x_2)$. Then
  $L^\TC=L_0^\TC\gen{x_1,x_2}$, $(x_1,x_2)$ is TC\--primitive over
  $L_0^\TC$ and $\sigma_\TC(x_1,x_2)=(\sigma_\SC(x_1,x_2))^\TC$. 
\end{lemma}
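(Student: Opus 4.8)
The plan is to treat the three assertions, starting with the two that require almost nothing. That $(x_1,x_2)$ is TC\--primitive over $L_0^\TC$ is simply the first clause in the definition of an SC\--primitive tuple, so nothing is to be proved. The equality of signatures is the content of the observation made just before the statement: writing $\sigma_\SC(x_1,x_2)=(g,H,q)$, the components $g$ and $H$ coincide with those of $\sigma_\TC(x_1,x_2)$ by definition, so only the last components must be matched. If $x_1=x_2$ then $\card\{x_1,x_2\}=1$, and \textbf{P2} gives $x_1\ll g$, whence $q=\scdim x_1<\scdim g$ by \ssref{SS:ll-scdim}, so $r_\sigma=1$. If $x_1\neq x_2$ then $\card\{x_1,x_2\}=2$; here $x_1,x_2\leq g$ together with $g-x_2=x_1$ force $g=x_1\join x_2$, so $\scdim g=\scdim x_1=q$ by \ssref{SS:scdim-union} and \textbf{P3}, giving $r_\sigma=2$. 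In both cases $r_\sigma=\card\{x_1,x_2\}$, i.e. $\sigma_\TC(x_1,x_2)=(\sigma_\SC(x_1,x_2))^\TC$.

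The real content is the first assertion, $L^\TC=L_0^\TC\gen{x_1,x_2}$. Set $M=L_0^\TC\gen{x_1,x_2}$. By Proposition~\ref{pr:recap-DJ}.\ref{it:recap-Irr} (applicable since $(x_1,x_2)$ is TC\--primitive over $L_0^\TC$), $M$ is a finite co\--Heyting algebra, an $\ltc$\--substructure of $L^\TC$, whose set $\cI(M)$ of $\join$\--irreducibles is $\cI(L_0)\cup\{x_1\}$ when $x_1=x_2$ and $(\cI(L_0)\setminus\{g\})\cup\{x_1,x_2\}$ otherwise. Clearly $M\subseteq L$ as a set, so it suffices to show that $M$ is closed under the operations $\pc^i$ of $L$: then $M$ is an $\lsc$\--substructure of $L$ containing $L_0\cup\{x_1,x_2\}$, hence $L=L_0\gen{x_1,x_2}\subseteq M$ by minimality of the generated structure, forcing $M=L$ and therefore $L^\TC=M$.

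To prove this closedness I would invoke Corollary~\ref{co:ss-lsc-structure} for the finite (hence noetherian) lattice $M$ embedded in the subscaled lattice $L$: it remains only to check that every $b\in\cI(M)$ is sc\--pure in $L$ and that $b'<b$ in $\cI(M)$ implies $\scdim b'<\scdim b$ in $L$. For sc\--purity, the irreducibles coming from $\cI(L_0)$ are sc\--pure in $L_0$ (being $\join$\--irreducible, by \ssref{SS:Ck(a)-k-pure}), hence sc\--pure in $L$ since $L_0$ is an $\lsc$\--substructure of $L$ and the $\pc^i$ agree; and $x_1,x_2$ are sc\--pure by \textbf{P3}. For strict monotonicity, if $b'<b$ with both in $\cI(M)$ then $b$ is $\join$\--irreducible in $M$, so \tcref{TC:a=(a-inter-b)-union-(a-b)} gives $b'\ll b$ in $M$; since $\ll$ is preserved by the $\ltc$\--embedding $M\hookrightarrow L$ this holds in $L$, whence $\scdim b'<\scdim b$ by \ssref{SS:ll-scdim}. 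This single argument covers both shapes of $\cI(M)$ uniformly.

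The main obstacle is precisely this reduction to Corollary~\ref{co:ss-lsc-structure}, and the one point demanding care is bookkeeping of the lattice in which each quantity is computed: one must use that sc\--dimension and sc\--purity are preserved along the $\lsc$\--embedding $L_0\hookrightarrow L$, and that $\ll$ is preserved along the $\ltc$\--embedding $M\hookrightarrow L$. Once these preservation facts are marshalled, the two hypotheses of the Corollary reduce exactly to \textbf{P3} and to the general $\join$\--irreducibility fact \tcref{TC:a=(a-inter-b)-union-(a-b)}, and the closedness of $M$ under the $\pc^i$ follows at once.
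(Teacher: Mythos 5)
Your proof is correct and follows essentially the same route as the paper's: reduce the substantive claim $L^\TC=L_0^\TC\gen{x_1,x_2}$ to showing that this $\ltc$\--structure is an $\lsc$\--substructure of $L$ via Corollary~\ref{co:ss-lsc-structure}, verifying sc\--purity of the irreducibles from \ssref{SS:Ck(a)-k-pure} and \textbf{P3} and strict growth of sc\--dimension from \tcref{TC:a=(a-inter-b)-union-(a-b)} and \ssref{SS:ll-scdim}. The only difference is that you spell out the signature comparison, which the paper dismisses as immediate from the definitions and the remark preceding the lemma.
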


\begin{proof}
That $(x_1,x_2)$ is TC\--primitive over $L_0^\TC$ and
$\sigma_\TC(x_1,x_2)=(\sigma_\SC(x_1,x_2))^\TC$ is only a reminder: it follows
directly from the definitions. Let $L_1=L_0^\TC\gen{x_1,x_2}$, in
order to conclude that $L_1=L$ it only remains to prove that $L_1$ is
an $\lsc$\--substructure of $L$. By
Corollary~\ref{co:ss-lsc-structure} it suffices to check that for
every $b'<b\in\cI(L_1)$, $b$ is sc-pure in $L$ and $\scdim b'<\scdim b$
in $L$. 

If $b\in\cI(L_0)$, then $b$ is sc-pure in $L_0$ by
\ssref{SS:Ck(a)-k-pure}, hence also in $L$ because $L_0$ is an
$\lsc$\--sub\-struc\-ture of $L$. Otherwise $b=x_i$ for some
$i\in\{1,2\}$. Then $b$ is sc-pure in $L$ by definition of SC-primitive
tuples over $L_0$. 

In both cases $b'\ll b$ in $L_1$ by
\tcref{TC:a=(a-inter-b)-union-(a-b)}, that is $b-b'=b$ in $L_1$, hence
also in $L$ because $L_1$ is an $\ltc$\--substructure of $L$. So $b'\ll
b$ in $L$ hence $\scdim(b')<\scdim(b)$ in $L$ by \ssref{SS:ll-scdim}.
\end{proof}

\begin{lemma}\label{le:TC-enrichi}
  Let $L_0$ be finite subscaled lattice, $L_1$ a $\ltc$\--extension
  generated over $L_0^\TC$ by a TC\--primitive tuple $(x_1,x_2)$, and
  $\tau=(g,\{h_1,h_2\},q)$ a SC\--signature in $L_0$ such that
  $\tau^\TC=\sigma_\TC(x_1,x_2)$. Then there exists a unique
  structure of subscaled lattice expanding $L_1$ which makes
  it a $\lsc$\--extension of $L_0$ such that $(x_1,x_2)$ is
  SC\--primitive over $L_0$ and $\sigma_\SC(x_1,x_2)=\tau$. 
\end{lemma}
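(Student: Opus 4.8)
The plan is to reduce the whole statement to the combinatorial description of finite subscaled lattices in Example~\ref{ex:Lnoeth}. By Proposition~\ref{pr:recap-DJ}.\ref{it:recap-Irr}, $L_1$ is a finite co-Heyting algebra whose set of $\join$\--irreducibles is $\cI(L_0)\cup\{x_1\}$ if $x_1=x_2$, and $(\cI(L_0)\setminus\{g\})\cup\{x_1,x_2\}$ if $x_1\neq x_2$. Since the co-Heyting difference of a finite lattice is determined by the order alone, the operation ``$-$'' manufactured in Example~\ref{ex:Lnoeth} necessarily agrees with the intrinsic one of $L_1$; hence endowing $L_1$ with a subscaled structure expanding its prescribed $\ltc$\--reduct amounts \emph{exactly} to a choice of strictly increasing map $D_1\colon\cI(L_1)\to\NN$, with $\scdim c=D_1(c)$. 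The lemma thus becomes the existence and uniqueness of a suitable $D_1$. The candidate is forced: set $D_1(c)=\scdim_{L_0}c$ for $c\in\cI(L_0)\cap\cI(L_1)$ and $D_1(x_i)=q$ for the new irreducible(s); its values lie in $\{0,\dots,d\}$ whenever $L_0$ is $d$\--subscaled, since $q\leq\scdim g\leq d$.

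The core computation is to check that $D_1$ is strictly increasing. On pairs of old irreducibles this is inherited from $L_0$ (where $\scdim_{L_0}$ is strictly increasing on $\cI(L_0)$ by \tcref{TC:a=(a-inter-b)-union-(a-b)} and \ssref{SS:ll-scdim}), because $L_0\hookrightarrow L_1$ preserves and reflects the order. The genuinely new comparabilities all involve some $x_i$, and here the two defining clauses of an SC\--signature enter. Any old irreducible $c<x_i$ satisfies $c\leq g^-\meet x_i$ (as $c<x_i\leq g$ forces $c\leq g^-$ and $c\leq x_i$); hence $\scdim_{L_0}c\leq\scdim g^-<\scdim g$ by \ssref{SS:scdim-union} and the fact that $g^-\ll g$, which combined with $\scdim h_i<q$ in the case $q<\scdim g$ (and trivially with $q=\scdim g$ otherwise) yields $\scdim_{L_0}c<q=D_1(x_i)$. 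Dually, any old irreducible $c>x_i$ satisfies $c\geq g(x_i,L_0)=g$; when $x_1=x_2$ this gives $\scdim_{L_0}c\geq\scdim g>q$, while when $x_1\neq x_2$ one has moreover $c\neq g$, so $c>g$ and $\scdim_{L_0}c>\scdim g=q$. Finally $x_1$ and $x_2$ are incomparable, since $\scdim x_1=\scdim x_2$ forbids $x_1\ll x_2$. Thus $D_1$ is strictly increasing in every case.

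With $D_1$ in hand, Example~\ref{ex:Lnoeth} endows $L_1$ with a subscaled structure, and two things remain. First, each $x_i$ is $\join$\--irreducible in $L_1$ with $D_1(x_i)=q$, hence $q$\--sc\--pure of sc\--dimension $q$: this is condition {\bf P3}, and together with the given TC\--primitivity it yields $\sigma_\SC(x_1,x_2)=(g,\{h_1,h_2\},q)=\tau$. Second, for the substructure claim I would apply Proposition~\ref{pr:CNS-de-lsc-plongement} to the inclusion $L_0\hookrightarrow L_1$: every $c\in\cI(L_0)$ distinct from $g$ is $\join$\--irreducible in $L_1$ and keeps its sc\--dimension, while in the case $x_1\neq x_2$ the element $g=x_1\join x_2$ is $q$\--sc\--pure by \ssref{SS:Ck(a-union-b)} with sc\--dimension $q=\scdim_{L_0}g$ by \ssref{SS:scdim-union}. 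So the inclusion preserves sc\--purity and sc\--dimension on $\cI(L_0)$ and is an $\lsc$\--embedding, making $L_1$ an $\lsc$\--extension of $L_0$ with $(x_1,x_2)$ SC\--primitive of signature $\tau$.

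Uniqueness is then immediate. Any subscaled structure making $L_1$ an $\lsc$\--extension of $L_0$ is, being finite, of the form given by some strictly increasing $D\colon\cI(L_1)\to\NN$; preservation of the $\pc^k$ under the $\lsc$\--embedding $L_0\hookrightarrow L_1$ forces $D=\scdim_{L_0}$ on $\cI(L_0)\cap\cI(L_1)$ through \ssref{SS:scdim-max-k}, and the requirement $\sigma_\SC(x_1,x_2)=\tau$ forces $D(x_i)=q$; hence $D=D_1$. The one delicate point in the plan is the strict monotonicity of $D_1$ at the new irreducibles, which is precisely where the two clauses defining an SC\--signature are used and rests on $\scdim g^-<\scdim g$; everything else is bookkeeping with the properties collected in Section~\ref{se:embedd-subsc-latt} and Proposition~\ref{pr:CNS-de-lsc-plongement}.
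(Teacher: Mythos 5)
Your proof is correct and follows essentially the same route as the paper's: both reduce the lemma to producing the forced strictly increasing map $D$ on $\cI(L_1)$ via Example~\ref{ex:Lnoeth}, check monotonicity by the same case analysis on which of the comparable irreducibles is new (using the two clauses of the SC\--signature and $g^-\ll g$), invoke Proposition~\ref{pr:CNS-de-lsc-plongement} for the embedding, and derive uniqueness from the forced values of $D$. The only blemish is your justification that $x_1$ and $x_2$ are incomparable via $\scdim$, which is circular since the $\lsc$\--structure is not yet defined at that point; but incomparability is an immediate order\--theoretic consequence of TC\--primitivity (if $x_1\leq x_2$ then $g\leq x_2\join(g-x_2)=x_2$, forcing $x_2=g\in L_0$), so this is cosmetic.
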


\begin{proof}
  By Proposition~\ref{pr:recap-DJ}.\ref{it:recap-Irr},
  $\cI(L_1)\subseteq\cI(L_0)\cup\{x_1,x_2\}$. For every $x\in\cI(L_0)$ let
  $D(x)=\scdim x$, and let $D(x_1)=D(x_2)=q$. This defines by
  restriction a function from $\cI(L_1)$ to $\NN$. Assume that $D$ is
  strictly increasing. Then it determines as in
  Example~\ref{ex:Lnoeth} an $\lsc$\--structure on $L_1$ expanding its
  $\ltc$\--structure. Let us denote it $L$, so that $L^\TC=L_1$. Every
  $\join$\--irreducible element of $L_0$ remains sc-pure in $L$ with the
  same sc-dimension, hence by
  Proposition~\ref{pr:CNS-de-lsc-plongement} the inclusion of $L_0$
  into $L$ is an $\lsc$\--embedding. This is clearly the only
  possible $\lsc$\--structure on $L_1$ which makes it an
  $\lsc$\--extension of $L$ such that $\scdim x_1=\scdim x_2=q$. So it
  only remains to prove that $D$ is strictly increasing. 
  
  Let $b< a$ in $\cI(L)$, if $a,b\in\cI(L_0)$ then
  $D(b)<D(a)$ by \ssref{SS:ll-scdim}. So we can assume that $a$ or $b$
  does not belong to $\cI(L_0)$. By
  Proposition~\ref{pr:recap-DJ}.\ref{it:recap-Irr} one of them must
  belong to $\{x_1,x_2\}$ and the other one to $\cI(L_0)$. Note that our
  assumption $\sigma_\TC(x_1,x_2)=\sigma^\TC$ implies that (for $i=1,2$)
  $g=g(x_i,L_0)$, and $h_i=x_i\meet g^-$ (up to re-numbering) and: either
  $x_1=x_2$, $h_1=h_2<g$ and $\scdim h_1<q<\scdim g$; or $x_1\neq x_2$,
  $h_1\join h_2=g^-$ and $q=\scdim g$. 

  {\it Case 1}: $b=x_1$ or $b=x_2$, hence $D(b)=q$. Then $a\in\cI(L_0)$,
  in particular $a\in L_0$, hence $g\leq a$ and so $\scdim g\leq \scdim a$. If
  $x_1=x_2$ then $q<\scdim g\leq \scdim a$ hence $D(b)<D(a)$. If $x_1\neq x_2$ 
  then $q=\scdim g$, and $g$ is not
  $\join$\--irreducible in $L$ hence $g\neq a$. So $g\ll a$ (because $g<a$ and
  $a$ is $\join$\--irreducible) hence $\scdim g<\scdim a$ by
  \ssref{SS:ll-scdim}, that is $D(b)<D(a)$. 

  {\it Case 2}: $a=x_1$ or $a=x_2$, hence $D(a)=q$. Then again 
  $b\in L_0$, and $b<a\leq g$ hence $b\leq g^-$. If $x_1=x_2$, since $b\leq a\meet
  g^-=h_1$ we get $\scdim b\leq\scdim h_1<q$, hence $D(b)<D(a)$. If $x_1\neq
  x_2$ then $\scdim g=q$. Since $b<g$ we have $b\ll g$ (because $g$
  is $\join$\--irreducible) hence $\scdim b<\scdim g$ by
  \ssref{SS:ll-scdim}. So $\scdim b<q$, that is $D(b)<D(a)$.
\end{proof}

We can now pack all this together. We will refer to the $k$\--th item
of the above proposition as Proposition~\ref{pr:SC-irr-sign-min}.$k$. 

\begin{proposition}\label{pr:SC-irr-sign-min}
  Let $L_0$ be a finite subscaled lattice and $L$ a
  $\lsc$\--extension%
  \footnote{See Footnote~\ref{fn:extension}.}. 
  \begin{enumerate}
    \item\label{it:SC-Irr}
      If $L$ is $\lsc$\--generated over $L_0$ by a SC\--primitive
      tuple $(x_1,x_2)$, then $L$ is exactly the upper semi-lattice
      generated over $L_0$ by $x_1$ and $x_2$. It is a finite
      subscaled lattice and one of the following holds: 
      \begin{enumerate}
        \item
          $x_1=x_2$ and $\cI(L)=\cI(L_0)\cup\{x_1\}$.
        \item
          $x_1\neq x_2$ and $\cI(L)=\left(\cI(L_0)\setminus\{g\}\right)\cup\{x_1,x_2\}$.
      \end{enumerate}
    \item\label{it:SC-sgn-ext}
      SC\--signatures in $L_0$ and SC\--primitive extensions of $L_0$
      are in one-to-one correspondence: every SC\--signature in $L_0$
      is the SC\--signature of a SC\--primitive extension, and two
      SC\--primitive extensions of $L_0$ are $\lsc$\--isomorphic over
      $L_0$ if and only if they have the same SC\--signature in $L_0$. 
    \item\label{it:SC-prim-min}
      If $L$ is finite, the following are equivalent. 
      \begin{enumerate}
        \item\label{it:SC-min}
          $L$ is a minimal proper $\lsc$\--extension of $L_0$.
        \item\label{it:SC-prim}
          $L$ is a SC\--primitive extension of $L_0$.
        \item\label{it:SC-card}
          $\card(\cI(L))=\card(\cI(L_0))+1$.
      \end{enumerate}
      As a consequence every finite $\lsc$\--extension $L'$ of $L_0$ is the
      union of a tower of SC\--primitive extensions $L_0\subset L_1\subset\cdots\subset
      L_n=L'$ with $n=\card(\cI(L'))-\card(\cI(L_0))$. 
  \end{enumerate}
\end{proposition}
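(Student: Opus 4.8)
The plan is to reduce all three assertions to their $\ltc$\--counterparts in Proposition~\ref{pr:recap-DJ}, passing between SC\--data and TC\--data through Lemmas~\ref{le:SC-reduit} and~\ref{le:TC-enrichi}. Two elementary facts will be used repeatedly. First, every $\join$\--irreducible element of a subscaled lattice is sc\--pure (by \ssref{SS:Ck(a)-k-pure}), and sc\--purity is inherited by topological complements: if $g$ is $k$\--sc\--pure then $g-x$ is $k$\--sc\--pure or $\ZERO$, since $(g-x)-b=g-(x\join b)$ by \tcref{TC:x-(y-union-z)} and $g$ being $k$\--sc\--pure forces every nonzero such complement to have sc\--dimension $k$, whence $g-x=\pc^k(g-x)$ by \ssref{SS:Ck(a)-k-pure}. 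Second, if $g\in\cI(L_0)$ has sc\--dimension $q$ and $g=x\join y$ in $L$ with $x$ sc\--pure, then applying $\pc^q$ and using \ssref{SS:Ck(a-union-b)} shows that either $\scdim x=q$ or $g-x=g$; this is the mechanism that will force equal sc\--dimensions in the splitting case.

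For the first item, Lemma~\ref{le:SC-reduit} gives $L^\TC=L_0^\TC\gen{x_1,x_2}$ with $(x_1,x_2)$ TC\--primitive, so Proposition~\ref{pr:recap-DJ}.\ref{it:recap-Irr} applies verbatim and transfers back to $L$ since $L$ and $L^\TC$ share the same underlying lattice. For the second item I would obtain existence by forming $\sigma^\TC$ from the given SC\--signature, realising it by a TC\--primitive extension via Proposition~\ref{pr:recap-DJ}.\ref{it:recap-sgn-ext}, and enriching it to an $\lsc$\--structure by Lemma~\ref{le:TC-enrichi}. For uniqueness, two SC\--primitive extensions with the same SC\--signature have $\ltc$\--reducts carrying the same TC\--signature, hence an $\ltc$\--isomorphism over $L_0^\TC$ by Proposition~\ref{pr:recap-DJ}.\ref{it:recap-sgn-ext}; this map fixes the $\join$\--irreducibles of $L_0$ and matches the new ones, which are sc\--pure of the same sc\--dimension on both sides, so it preserves sc\--purity and sc\--dimension of all $\join$\--irreducibles and is therefore an $\lsc$\--isomorphism by Proposition~\ref{pr:CNS-de-lsc-plongement}.

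For the third item, the implication from SC\--primitivity to the cardinality condition is the first item, and from SC\--primitivity to minimality one notes that any intermediate $\lsc$\--substructure $M$ gives $L_0^\TC\subseteq M^\TC\subseteq L^\TC$, where $L^\TC$ is a minimal proper $\ltc$\--extension of $L_0^\TC$ (Lemma~\ref{le:SC-reduit} and Proposition~\ref{pr:recap-DJ}.\ref{it:recap-prim-min}), forcing $M=L_0$ or $M=L$. The cardinality condition implies SC\--primitivity by reducing to the TC case: $\card(\cI(L^\TC))=\card(\cI(L_0^\TC))+1$ makes $L^\TC$ TC\--primitive, generated by a tuple $(x_1,x_2)$ whose components lie in $\cI(L^\TC)=\cI(L)$ and are thus sc\--pure; when $x_1\neq x_2$ we have $g=x_1\join x_2\in\cI(L_0)$, and the second preliminary fact forces $\scdim x_1=\scdim x_2=\scdim g$, yielding \textbf{P3}.

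The remaining content, and the main obstacle, is to exhibit inside any proper finite $\lsc$\--extension $L$ of $L_0$ a single SC\--primitive extension of $L_0$; this gives minimality$\,\Rightarrow\,$SC\--primitivity (minimality forces it to be all of $L$) and, by induction on $\card(\cI(L))-\card(\cI(L_0))$, the tower decomposition. The key idea is to choose $x$ to be a $\join$\--irreducible of $L$ of \emph{minimal sc\--dimension} among those not in $L_0$ (such $x$ exists, else $L=L_0$). Minimality of $\scdim x$ guarantees that every $\join$\--irreducible of $L$ of sc\--dimension $<\scdim x$ lies in $L_0$; setting $g=g(x,L_0)$, $\join$\--irreducibility of $x$ and distributivity give $g\in\cI(L_0)$ and $x\not\leq g^-$. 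If $x\ll g$ I would take the tuple $(x,x)$; otherwise $g-x\neq g$, so $\scdim x=\scdim g$ by the second preliminary fact, and I would take $(x,g-x)$, whose second coordinate is sc\--pure of sc\--dimension $\scdim g$ by the first preliminary fact. Conditions \textbf{P1} and \textbf{P2} are then checked as in the TC case; the only delicate point is $g^-\meet(g-x)\in L_0$, which holds because all its $\join$\--irreducible components lie below $g^-$, hence have sc\--dimension $<\scdim g=\scdim x$, hence belong to $L_0$ by the minimal choice of $\scdim x$. This last step is exactly where minimality of the sc\--dimension is essential, and is the crux of the whole proof.
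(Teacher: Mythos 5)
Your proposal is correct and follows the same overall architecture as the paper: both reduce items (1)--(3) to Proposition~\ref{pr:recap-DJ} through Lemmas~\ref{le:SC-reduit} and~\ref{le:TC-enrichi}, and your item~(1) and the existence half of item~(2) are verbatim the paper's. The one genuine divergence is in (\ref{it:SC-min})$\Rightarrow$(\ref{it:SC-prim}): the paper picks $x_1$ \emph{minimal for the order} in $\cI(L)\setminus\cI(L_0)$ and outsources the verification that the resulting tuple is TC\--primitive to the proof of Corollary~3.4 of \cite{darn-junk-2018}, only adding the sc\--purity and equal\--sc\--dimension check for \textbf{P3}; you instead pick $x$ of \emph{minimal sc\--dimension} and re-derive \textbf{P1} from that choice (the components of $g^-\meet x$ and of $g^-\meet(g-x)$ have sc\--dimension $<\scdim x$, via $g^-\ll g$, \ssref{SS:ll-scdim} and, in the splitting case, $\scdim x=\scdim g$, hence lie in $L_0$). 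This is a valid alternative whose benefit is self\--containedness at exactly the point where the paper leans on an external argument; just note that your phrase ``P1 and P2 are then checked as in the TC case'' is slightly misleading, since the TC verification of \textbf{P1} rests on order\--minimality and does not transfer to your choice of $x$ --- but you do supply the needed substitute yourself, and correctly identify it as the crux. The remaining differences are cosmetic: in item~(2) you get the $\lsc$\--isomorphism from Proposition~\ref{pr:CNS-de-lsc-plongement} rather than from the uniqueness clause of Lemma~\ref{le:TC-enrichi} (both work), and in item~(3) you prove (\ref{it:SC-card})$\Rightarrow$(\ref{it:SC-prim}) directly where the paper routes through (\ref{it:SC-card})$\Rightarrow$(\ref{it:SC-min})$\Rightarrow$(\ref{it:SC-prim}).
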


If $L$ is a SC\--primitive extension of a finite subscaled lattice
$L_0$, by Proposition~\ref{pr:SC-irr-sign-min}.\ref{it:SC-Irr} it is
generated over $L_0$ by a {\em unique} (up to permutation)
SC-primitive couple $(x_1,x_2)$. We call $\sigma_\SC(x_1,x_2)$ the {\df
SC\--signature of $L$ in $L_0$} and denote it $\sigma_\SC(L)$.

\begin{proof}
(\ref{it:SC-Irr}) If $L$ is $\lsc$\--generated over $L_0$ by an SC\--primitive
tuple $(x_1,x_2)$, then by Lemma~\ref{le:SC-reduit} $L^\TC$ is also
$\ltc$\--generated over $L_0^\TC$ by $(x_1,x_2)$, which is
TC\--primitive. The first item the follows from
Proposition~\ref{pr:recap-DJ}.\ref{it:recap-Irr}. 

(\ref{it:SC-sgn-ext}) Let $\sigma$ be an SC\--signature in $L_0$. Then
$\sigma^\TC$ is a TC\--signature in $L_0$.
Proposition~\ref{pr:recap-DJ}.\ref{it:recap-sgn-ext} gives a
TC\--primitive $\ltc$\--extension $L_1$ of $L_0^\TC$ with
TC\--signature $\sigma^\TC$ in $L_0$. Lemma~\ref{le:TC-enrichi} then gives
a unique structure of subscaled lattice expanding $L_1$ which makes it
an SC\--primitive extension of $L_0$ with signature $\sigma$ in $L_0$. Let
us denote it $L$, so that $L^\TC=L_1$. Now if $L'$ is another
SC\--primitive extension with signature $\sigma$ in $L_0$, by
Proposition~\ref{pr:recap-DJ}.\ref{it:recap-sgn-ext} $L'^\TC$ is
$\ltc$\--isomorphic to $L^\TC$ over $L_0$. The image of $L'$ {\it via}
this endomorphism defines an $\lsc$\--structure expanding $L^\TC$,
which makes it an SC\--primitive extension of $L_0$ with the same
signature as $L$. By the uniqueness of such a structure, given by
Lemma~\ref{le:TC-enrichi}, it follows that this $\ltc$\--isomorphism
from $L'^\TC$ to $L^\TC$ is actually an $\lsc$\--isomorphism, which
proves the result. 

(\ref{it:SC-prim-min}) We prove
(\ref{it:SC-min})$\Rightarrow$(\ref{it:SC-prim})$\Rightarrow$(\ref{it:SC-card})$\Rightarrow$(\ref{it:SC-min}).
Note that (\ref{it:SC-prim})$\Rightarrow$(\ref{it:SC-card}) follows from
item~\ref{it:SC-Irr}). 

(\ref{it:SC-card})$\Rightarrow$(\ref{it:SC-min}). Let $L'$ be a proper
$\lsc$\--extension of $L_0$ contained in $L$. Then $L'^\TC$ is a
proper $\ltc$\--extension of $L_0^\TC$ contained in $L^\TC$. By
Proposition~\ref{pr:recap-DJ}.\ref{it:recap-prim-min},
(\ref{it:SC-card}) implies that $L^\TC$ is a minimal proper
$\ltc$\--extension of $L_0^\TC$. So $L'^\TC=L^\TC$, thus necessarily
$L'=L$, which proves that $L$ is minimal. 

(\ref{it:SC-min})$\Rightarrow$(\ref{it:SC-prim}). Let $x_1$ be a minimal
element in $\cI(L)\setminus\cI(L_0)$. Let $g=g(x_1,L_0)$,; if $x_1\ll g$ let
$x_2=x_1$, otherwise let $x_2=g-x_1$. The proof of Corollary~3.4 in
\cite{darn-junk-2018} shows that $(x_1,x_2)$ is TC\--primitive
over $L_0^\TC$. In particular $x_1,x_2\in\cI(L)$ so they are
sc\--pure by \ssref{SS:Ck(a)-k-pure}. The same holds true for $g$,
hence if $x_1\neq x_2$ then $x_1=g-x_2$ and $x_2=g-x_1$ have the same
dimension (the dimension of $g$, by definition of the sc-purity of
$g$). So $(x_1,x_2)$ is actually SC\--primitive. Since
$L_0^\TC\gen{x_1,x_2}=L^\TC$, {\it a fortiori} $L_0\gen{x_1,x_2}=L$,
hence $L$ is SC\--primitive over $L_0$. 
\end{proof}

\section{Model-completion of scaled lattices}
\label{se:model-completion}

We say that a subscaled lattice $L$ is a {\df super scaled lattice},
if $L$ satisfies the following additional properties, both of which are clearly
axiomatizable by $\forall\exists$\--formulas in $\lsc$. Moreover, if $\scdim L\leq
d$, we say that $L$ is a {\df super $d$\--scaled lattice}.
\begin{description}
  \item[\bf Catenarity]
    For every non-negative integers $r\leq q\leq p$ and every elements $c\leq
    a\neq\ZERO$, if $c$ is $r$\--sc-pure and $a$ is $p$\--sc-pure then
    there exists a non-zero $q$-sc-pure element $b$ such that $c\leq b\leq
    a$.
\end{description}

If $\Sp L$ is noetherian this property is equivalent to the usual
notion of catenarity, namely that any two maximal chains in $\Sp L$
having the same first and last elements have the same length. In
particular every $d$\--scaled lattice $L$ of type $\lzar(X)$ or
$\llin(X)$ satisfies this property. If $\cK$ is an $o$\--minimal
field and $X\subseteq K^m$ is any definable set, then $\ldef(X)$ also
satisfies the Catenarity Property: given $A,C\in\ldef(X)$, respectively
$p$\--pure and $r$\--pure, the Triangulation Theorem reduces to the
case where $A$ is a simplex and $C$ one of its faces, and it then
suffices to take for $B$ a face of $A$ of dimension $p$ containing $C$
In contrast, none of these scaled lattices satisfy the next property,
as it implies that $L$ is atomless. 

\begin{description}
  \item[\bf Splitting]
    For every elements $b_1,b_2,a$, if $b_1\join b_2\ll a\neq\ZERO$ then there
    exists non-zero elements $a_1\geq b_1$ and $a_2\geq b_2$ such that:
    \[
      \left\{
        \begin{array}{l}
          a_1=a-a_2\\
          a_2=a-a_1\\
          a_1 \meet a_2 = b_1\meet b_2 
        \end{array}
      \right. 
    \]
    We will then say $a_1$, $a_2$ {\df split
    $a$ along} $b_1$, $b_2$. 
\end{description}

\begin{remark}\label{re:cat-0}
  If $r< p\leq q$ in the Catenarity axiom, the conclusion can be
  strengthen to $c\ll b\leq a$. Indeed $b$ has pure sc-dimension $q$ 
  and $c\meet b=c$ has sc-dimension $<q$ hence $b-c=b$ by
  \ssref{SS:Ck-a-moins-b}. In particular {\em every
    subscaled lattice satisfying the Catenarity axiom is a scaled
  lattice}. Indeed, given any element $a$ of sc-dimension
  $d\geq1$, repeated applications of the Catenarity axiom to $\pc^d(a)$,
  $c=\ZERO$ and each integer $p$ from $0$ to $d$, gives a chain of
  sc-pure elements $a_0,\dots,a_d$ such that
  \[
    \ZERO\neq a_0\ll a_1\ll \cdots \ll a_d\leq a. 
  \]
  By Fact~\ref{fa:ldim-et-TCdim} it follows that $\dim a\geq d$, and
  by \ssref{SS:scdim-inf-dim} that $\dim a=d$. 
\end{remark}

\begin{lemma}\label{le:embed-split}
  Let $a,b_1,b_2$ be elements of a finite subscaled lattice $L_0$. If
  $b_1 \join b_2 \ll a \neq \ZERO$ then $L_0$ embeds in a finite subscaled lattice
  $L$ containing non-zero elements $a_1$, $a_2$ which split $a$ along
  $b_1$, $b_2$. Moreover, if $\pc^0(a)=\ZERO$ we can
  require\footnote{This additional requirement when $\pc^0(a)=\ZERO$ will
    be used only later, in Section~\ref{se:mod-comp-ASC}.} that all
  the atoms of $L$ belong to $L_0$. 
\end{lemma}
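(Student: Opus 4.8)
The plan is to argue dually, representing the finite subscaled lattice $L_0$ through the poset of its $\join$-irreducible elements and building $L$ by an explicit ``doubling'' of a piece of this poset. By Example~\ref{ex:Lnoeth} I may identify $L_0$ with $\cD^\downarrow(P)$, where $P=\cI(L_0)$ carries the strictly increasing map $D_0=\scdim$ and the $\lsc$\--operations are those induced by $D_0$. For $e\in L_0$ write $\hat e=e^\downarrow\cap P$ for the corresponding decreasing subset of $P$, so that $\hat{}$ turns $\join,\meet,-$ into $\cup,\cap$ and $X\mapsto(\hat a\setminus X)\downarrow$. Put $\hat c=\hat b_1\cap\hat b_2$. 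The hypothesis $b_1\join b_2\ll a\neq\ZERO$ translates, through $b\ll a\iff b\leq a-b$ and \ssref{SS:ll-scdim}, into the two facts I shall use repeatedly: first $\hat b_1\cup\hat b_2\subsetneq\hat a$, and second every element of $\hat b_1\cup\hat b_2$ lies strictly below some element of $S:=\hat a\setminus(\hat b_1\cup\hat b_2)$.

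First I would build a doubled poset $Q$. Partition $\hat a=\hat c\sqcup S_1\sqcup S_2\sqcup S$ with $S_1=\hat b_1\setminus\hat c$ and $S_2=\hat b_2\setminus\hat c$, keep $P\setminus\hat a$ untouched, and replace each $g\in S$ by two incomparable copies $g_1,g_2$. Labelling the elements of $S_1$ and the first copies by $1$, those of $S_2$ and the second copies by $2$, and all remaining elements (those of $\hat c$ and of $P\setminus\hat a$) by $*$, I define $\pi\colon Q\to P$ by forgetting copies and set $x\leq_Q y$ iff $\pi(x)\leq_P\pi(y)$ and the labels of $x,y$ are compatible (never one $1$ and the other $2$). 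Transitivity is the only nonobvious poset axiom, and it holds because $\hat c$ is decreasing: nothing carrying a label $1$ or $2$ sits below a $*$\--labelled element of $\hat a$. With $D=D_0\circ\pi$, the map $D$ is strictly increasing on $Q$ since two distinct elements with the same image are incomparable copies. Finally $\pi$ is a surjective p-morphism: the back condition reduces to the fact that no element of $\hat b_2$ lies above an element of $S_1\cup S$ (and symmetrically), which is immediate since $\hat b_1,\hat b_2$ are decreasing and $S_1\cup S$ is disjoint from $\hat b_2$.

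With this in hand I would let $L=\cD^\downarrow(Q)$, equipped with the subscaled structure attached to $D$ by Example~\ref{ex:Lnoeth}, and take $\varphi=\pi^{-1}$. Being induced by a p-morphism (so fitting the hypotheses of Fact~\ref{fa:duality}), $\varphi$ is an $\llat$\--embedding that preserves ``$-$''; and since each $\join$\--irreducible $g^\downarrow$ of $L_0$ is sent to a set all of whose $\join$\--irreducible components project onto $g$, hence have $D$\--value $D_0(g)$, the map $\varphi$ preserves sc\--purity and sc\--dimension of $\join$\--irreducibles. Proposition~\ref{pr:CNS-de-lsc-plongement} then promotes $\varphi$ to an $\lsc$\--embedding. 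I would now define $a_1$ (resp. $a_2$) to be the decreasing set of all elements of $\varphi(a)$ whose label lies in $\{1,*\}$ (resp. $\{2,*\}$). Then $a_1\join a_2=\varphi(a)$ and $a_1\meet a_2=\hat c=\varphi(b_1\meet b_2)$ by construction, while $\varphi(b_i)\subseteq a_i$ because $\hat b_i=\hat c\sqcup S_i$ meets $S$ emptily. The two clauses $a_1=\varphi(a)-a_2$ and $a_2=\varphi(a)-a_1$ amount to $(a_i\setminus\hat c)\downarrow=a_i$, that is to every element of $\hat c$ lying below some doubled element in each sheet, which is exactly the second consequence of $b_1\join b_2\ll a$ recorded above. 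Non\--triviality of $a_1,a_2$ follows since a maximal element of $\hat a$ lies in $S$ (it cannot lie in $\hat b_1\cup\hat b_2$, again by $\ll$) and thus contributes a copy to each sheet.

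For the final clause, when $\pc^0(a)=\ZERO$ I would refrain from doubling the elements of $S$ of dimension $0$, assigning each of them to sheet $1$ only. The verifications above go through verbatim: such elements are minimal in $P$, so they play no role in the down\--closures defining the complements, and the back condition is unaffected because they still lie below no element of $\hat b_2$. No new minimal element of dimension $0$---that is, no new atom---is then created, since the only elements of $Q$ outside $P$ are the second copies, now all of dimension $\geq1$; and $a_2$ stays non\--trivial because $\pc^0(a)=\ZERO$ forces every maximal element of $\hat a$ to have dimension $\geq1$, and such an element lies in $S$ and is still doubled. I expect the bulk of the work to be precisely the verification of the two complement identities $a_i=\varphi(a)-a_{3-i}$ together with the back condition making $\varphi$ preserve ``$-$''; both hinge on translating $b_1\join b_2\ll a$ correctly, while the last clause adds only the mild bookkeeping of leaving the dimension\--$0$ stratum undoubled.
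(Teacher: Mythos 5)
Your main construction is correct, and it takes a genuinely different route from the paper's. The paper proves the first assertion by induction on $\scdim a$: it forms $u=(\jjoin_i g_i^-)-(b_1\join b_2)$, splits $u-\pc^0(u)$ recursively, and then realizes one SC\--signature $\big(g_i,\{h_{i,1},h_{i,2}\},\scdim g_i\big)$ per $\join$\--irreducible component of $a$, so that $L$ is obtained as a tower of SC\--primitive extensions. You instead work entirely on the dual poset $P=\cI(L_0)$ and produce $L=\cD^\downarrow(Q)$ in a single step by doubling the stratum $S=\hat a\setminus(\hat b_1\cup\hat b_2)$. The key verifications — that the labelled order on $Q$ is transitive, that $\pi$ satisfies the back condition, that $D=D_0\circ\pi$ is strictly increasing so that Example~\ref{ex:Lnoeth} and Proposition~\ref{pr:CNS-de-lsc-plongement} apply, and that the two complement identities reduce to ``every point of $\hat b_1\cup\hat b_2$ lies strictly under a point of $S$'' — are all sound (your transitivity argument tacitly uses that a $*$\--labelled element squeezed between a $1$ and a $2$ projects into $\hat a$, hence into $\hat c$; this is immediate since $\hat a$ is decreasing). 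This buys a shorter, non\--inductive and more transparent proof of the first assertion.

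The final clause, however, is not established. You equate ``new atom'' with ``new minimal element of dimension $0$'', but an atom of $\cD^\downarrow(Q)$ is a singleton $\{q\}$ with $q$ minimal in $Q$, \emph{whatever} $D(q)$ is: in a subscaled lattice an atom may have positive sc\--dimension (take $P$ a one\--point poset with $D_0=1$). So leaving the dimension\--$0$ part of $S$ undoubled does not prevent new atoms: any $g\in S$ minimal in $P$ with $D_0(g)\geq1$ — which is perfectly compatible with $\pc^0(a)=\ZERO$ — is still doubled, and $\{g_1\},\{g_2\}$ are atoms of $L$ not lying in $\varphi(L_0)$. Worse, this cannot be repaired within your construction or any other: if $a$ is itself an atom of $L_0$ with $\scdim a\geq1$ and $b_1=b_2=\ZERO$, then in any finite extension the two disjoint non\--zero elements $a_1,a_2$ are strictly below $a$ and each dominates an atom of $L$, and such an atom cannot belong to $L_0$ because it is non\--zero and strictly below the atom $a$. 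What your modification does prove is the weaker statement that no new atom of sc\--dimension $0$ is created, equivalently that every atom of $L_0$ of sc\--dimension $0$ remains an atom of $L$ — which is what the application in Section~\ref{se:mod-comp-ASC} actually needs, via Remark~\ref{re:lasc-plongement} and the axiom scheme bounding $\asc$ by sc\--dimension. (The paper's own proof meets the same obstruction at the corresponding point: when $g_i^-=\ZERO$ the signature $\sigma_i$ has $h_{i,1}=h_{i,2}=\ZERO$ and the new irreducibles $a_{i,1},a_{i,2}$ are atoms of $L_n$.) You should either prove that weaker statement explicitly, or flag that the clause as literally stated cannot hold.
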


\begin{proof}
  We are going to prove by induction on $d=\scdim a$ a slightly more
  precise result, namely that in addition $x\leq a$ for every
  $x\in\cI(L)\setminus\cI(L_0)$. 
  Let $g_1,\dots,g_n$ be the $\join$\--irreducible components of $a$ in $L$.
  Note that $n\geq1$ because $a\neq\ZERO$. If $d=0$ our assumption that
  $b_1\join b_2\ll a$ implies by \ssref{SS:ll-scdim} that $b_1=b_2=\ZERO$.
  If $n=1$, that is $a=g_1$ is $\join$\--irreducible, then
  $\sigma=(g,\{\ZERO\},0)$ is a signature in $L$.
  Proposition~\ref{pr:SC-irr-sign-min}.\ref{it:SC-sgn-ext} gives an
  SC\--primitive couple $(a_1,a_2)$ generating an $\lsc$\--extension
  $L_1$ over $L$ with signature $\sigma$. This signature ensures that
  $(a_1,a_2)$ splits $a$ along $(\ZERO,\ZERO)$. If $n\geq2$,
  $a_1=g_1$ and $a_2=a-a_1$ will do the job. So the result is proved 
  for $d=0$. 
  
  Now assume that $d\geq1$ and the result is valid until $d-1$. Note
  that $g_1^-\join\cdots\join g_n^-$ is the greatest element $c\in L$ such that
  $c\ll a$, in particular
  \begin{equation}
    b_1\join b_2\leq g_1^-\join\cdots\join g_n^-.
    \label{eq:b1b2-g-moins}
  \end{equation}
  Let $u=(\jjoin_{i\leq n}g_i^-)-(b_1\join b_2)$ and $u^*=u-\pc^0(u)$. Since $u\ll
  a$ we have $\scdim u< d$ by \ssref{SS:ll-scdim}. 
  
  We are claiming that $L_0$ embeds in a finite subscaled lattice $L$
  without new atoms, in which all the $g_i$'s are still
  $\join$\--irreducible with the same predecessor as in $L_0$, and in
  which there are elements $u^*_1$, $u^*_2$ which satisfy all the
  conditions to split $u^*$ along $b_1\meet u^*$, $b_2\meet u^*$, except that
  $u^*_1$, $u^*_2$ might be zero elements.
  
  By \tcref{TC:(x-y)-y}, $(b_1\join b_2)\meet u^*\ll u^*$ so if $\scdim u\leq 0$ we
  can simply take $u_1^*=u_2^*=\ZERO$ and $L_0=L$. On the other hand,
  if $\scdim u>0$ the induction hypothesis applies to $u^*$, $b_1\meet
  u^*$, $b_2\meet u^*$. It gives a finite subscaled lattice $L$
  containing $L_0$ and elements $u^*_1,u^*_2\in L_0$ which split $u^*$
  along $b_1\meet u^*$, $b_2\meet u^*$. Moreover we can require that $L$ do
  not contain any new atom because $\pc^0(u^*)=\ZERO$, and that $x\leq
  u^*$ for every $x\in\cI(L)\setminus\cI(L_0)$. For every $x\in\cI(L)$ such that
  $x<g_i$ for some $i\leq n$, if $x\in L_0$ then $x\leq g_i^-$ (where $g_i^-$
  still denotes the predecessor of $g_i$ in $L_0$). If $x\notin L_0$ then
  $x\leq u^*$ by construction hence $x\leq g_i\meet u^*$. The latter belongs to
  $L_0$ and is strictly smaller than $g_i$, hence smaller than $g_i^-$, so
  $x<g_i$. It follows that $g_i^-$ is still the unique predecessor of
  $g_i$ in $L$. In particular $g_i$ remains $\join$\--irreducible in
  $L$. This proves our claim in both cases. 
  
  Now let $u_1=\pc^0(u)\join u_1^*$ and $u_2=u_2^*$. We have in particular
  \begin{equation}
    u_1\join u_2=\jjoin_{i\leq n}g_i^- -(b_1\join b_2).
    \label{eq:u1-join-u2}
  \end{equation}
  Since $u-b_2=u$ by \tcref{TC:(x-y)-y} necessarily $b_2\meet c\ll c$ for
  every $\join$\--irreducible component $c$ of $u$, hence
  $b_2\meet\pc^0(u)\ll\pc^0(u)$. By \ssref{SS:ll-scdim} it follows that
  $b_2\meet\pc^0(u)=\ZERO$ hence $b_2\meet u_1=b_2\meet u_1^*$. Similarly
  $u^*\meet\pc^0(u)=\ZERO$ because $u^*-\pc^0(u)=u^*$ by
  \ssref{SS:a-moins-Cd(a)} and \tcref{TC:(x-y)-y}. {\it A fortiori}
  $u_2^*\meet\pc^0(u)=\ZERO$ hence $u_2^*\meet u_1=u_2^*\meet u_1^*$. Note also
  that $b_1\meet u_2^*=b_1\meet u^*\meet u_2^*\leq u_1^*\meet u_2^*$, and symmetrically
  $b_2\meet u_1^*\leq u_1^*\meet u_2^*$. Altogether, since $u_2=u_2^*$ and
  $u^*_1\meet u^*_2\leq b_1\meet b_2$ by construction, this gives
  \[
    (b_1\meet u_2)\join(b_2\meet u_1)\join(u_1\meet u_2)\leq(b_1\meet b_2) 
  \]
  hence
  \begin{equation}
    (b_1\join u_1)\meet(b_2\join u_2)
    =(b_1\meet b_2)\join(b_1\meet u_2)\join(b_2\meet u_1)\join(u_1\meet u_2)=(b_1\meet b_2).
    \label{eq:b1u1-inter-b2u2}
  \end{equation}

  After this preparation, for each $i$ let 
  \begin{align*}
    h_{i,1}=g_i^-\meet (b_1\join u_1), & & h_{i,2}=g_i^-\meet (b_2\join u_2), 
    & & \sigma_i=\big(g_i,\{h_{i,1},h_{i,2}\},\scdim g_i\big)
  \end{align*}
  Using (\ref{eq:u1-join-u2}) we get
  \begin{align*}
    h_{i,1}\join h_{i,2}
    &= g_i^-\meet (b_1\join u_1\join b_2\join u_2 ) \\
    &= g_i^-\meet\left[b_1\join b_2\join \left(\jjoin_{j\leq n}g_j^- -(b_1\join b_2)\right)\right]\\
    &= g_i^-\meet \jjoin_{j\leq n}g_j^- =g_i^-.
  \end{align*}
  So each $\sigma_i$ is an SC\--signature in $L_0$. In particular
  Proposition~\ref{pr:SC-irr-sign-min}.\ref{it:SC-sgn-ext} gives an
  SC\--primitive extension $L_1=L_0\gen{a_{1,1},a_{1,2}}$ with
  SC\--signature $\sigma_1$ in $L_0$. By
  Proposition~\ref{pr:SC-irr-sign-min}.\ref{it:SC-Irr},
  $\cI(L_1)=(\cI(L_0)\setminus\{g_1\})\cup\{a_{1,1},a_{1,2}\}$. In particular
  $g_2\in\cI(L_1)$, hence $\sigma_2$ is still an SC\--signature in $L_1$.
  Repeating the construction $n$ times (note that $a\neq\ZERO$ ensures
  that $n\geq1$) gives a chain of $\lsc$\--extensions $(L_i)_{i\leq n}$ and
  for each $i>0$, an SC\--primitive couple $(a_{i,1},a_{i,2})$
  generating $L_i$ over $L_{i-1}$ with signature $\sigma_i$ in $L_{i-1}$.
  Each $g_i=a_{i,1}\join a_{i,2}$ and by
  Proposition~\ref{pr:SC-irr-sign-min}.\ref{it:SC-Irr}
  \begin{equation}
  \cI(L_n)=\big(\cI(L_0)\setminus\{g_1,\dots,g_n\}\big)
           \cup\{a_{1,1},a_{1,2},\dots,a_{n,1},a_{n,2}\}
    \label{eq:irr-Ln}
  \end{equation}
  so $a_{1,1},a_{1,2},\dots,a_{n,1},a_{n,2}$ are the $\join$\--irreducible
  components of $a$ in $L_n$. Moreover every $c\in\cI(L_n)$ such that
  $c<a_{i,k}$ for some $i,k$ must belong to $L_0$, hence the
  predecessor of $a_{i,k}$ is the same in every $L_j$ and belongs to
  $L_0$. We can then denote it $a_{i,k}^-$ without ambiguity, and by
  construction we have 
  \begin{equation}
    a_{i,k}^- = a_{i,k}\meet g(a_i,L_{i-1}) = a_{i,k}\meet g_i = h_{i,k}.
    \label{eq:aik-moins}
  \end{equation}
  Let $a_1=\jjoin_{i\leq n}a_{i,1}$, $a_2=\jjoin_{i\leq n}a_{i,2}$, $h_1=\jjoin_{i\leq
  n}h_{i,1}$ and $h_2=\jjoin_{i\leq n}h_{i,2}$. We are going to check that
  $a_1$, $a_2$ split $a$ along $b_1$, $b_2$. Both of them are non-zero
  and since the $a_{i,k}$'s are the $\join$\--irreducible components of
  $a$ we have $a-a_1=a_2$, $a-a_2=a_1$. Each
  $a_{i,1}\geq h_{i,1}$ by construction, hence $a_1\geq h_1$ and
  symmetrically $a_2\geq h_2$. Moreover for $k\in\{1,2\}$
  \[
    h_k=\jjoin_{i\leq n} h_{i,k}\geq\jjoin_{i\leq n}g_i^-\meet b_k=b_k 
  \]
  where the last equality comes from (\ref{eq:b1b2-g-moins}), so
  $a_k\geq b_k$. It remains to check that $a_1\meet a_2=b_1\meet b_2$. 

  For $i\neq j$, $a_{i,1}$ and $a_{j,2}$ are mutually incomparable
  hence by (\ref{eq:aik-moins})
  \[
    a_{i,1}\meet a_{j,2}=a_{i,1}^-\meet a_{i,2}^- =h_{i,1}\meet h_{j,2}.
  \]
  On the other hand $a_{i,1}\meet a_{i,2}=h_{i,1}\meet h_{i,2}$ by
  construction. The conclusion follows, with $L=L_n$, using
  (\ref{eq:b1u1-inter-b2u2}).
  \begin{align*}
    a_1\meet a_2 &= \jjoin_{i,j}a_{i,1}\meet a_{j,2} = \jjoin_{i,j}h_{i,1}\meet h_{j,2} \\
             &= \jjoin_{i,j}\big[g_i^-\meet (b_1\join u_1)\big]
                      \meet \big[g_j^-\meet (b_2\join u_2)\big] \\
             &= \jjoin_{i,j}(g_i^-\meet g_j^-)
                      \meet\big[(b_1\join u_1)\meet(b_2\join u_2)\big] \\
             &= \Big(\jjoin_{i}g_i^-\Big)\meet \Big(\jjoin_{j}g_j^-\Big)
                      \meet\big[(b_1\join u_1)\meet(b_2\join u_2)\big] \\
             &= (b_1\join u_1)\meet(b_2\join u_2) = b_1\meet b_2.
  \end{align*}
\end{proof}

\begin{theorem}\label{th:model-completion}
The theory of super $d$\--scaled lattices is the 
model-completion of the theory of $d$\--subscaled lattices. In
particular, it eliminates the quantifiers in $\lsc$.
\end{theorem}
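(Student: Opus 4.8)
The plan is to verify the standard model-theoretic criterion for model-completion: namely that the theory of super $d$-scaled lattices is model-consistent with the theory of $d$-subscaled lattices (every $d$-subscaled lattice embeds in a super $d$-scaled one) and that it is model-complete, for which it suffices to establish the amalgamation-style ``one-point extension lifting'' condition. Concretely, I would prove that whenever $L$ is a super $d$-scaled lattice and $L_0\subseteq L$ is a finitely generated (hence finite, by Theorem~\ref{th:TCS-treillis-type-fini}) $\lsc$-substructure, every SC-primitive extension $L_0\gen{x_1,x_2}$ of $L_0$ that embeds in \emph{some} super $d$-scaled extension of $L_0$ already embeds into $L$ over $L_0$. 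By Proposition~\ref{pr:SC-irr-sign-min}.\ref{it:SC-prim-min}, arbitrary finite extensions decompose into towers of SC-primitive extensions, so handling one SC-signature at a time is enough; and by Proposition~\ref{pr:SC-irr-sign-min}.\ref{it:SC-sgn-ext} each such extension is determined up to isomorphism over $L_0$ by its SC-signature $\sigma=(g,H,q)$ in $L_0$.

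\medbreak\noindent\textbf{Main steps.} First I would confirm model-consistency: by Remark~\ref{re:cat-0} the Catenarity axiom already forces a subscaled lattice to be scaled, and one checks that every $d$-subscaled lattice embeds into a model satisfying both Catenarity and Splitting. Lemma~\ref{le:embed-split} provides exactly the Splitting step on a finite piece, and iterating (via compactness, using local finiteness) produces a super $d$-scaled extension. Second, and this is the heart, I would show model-completeness by realizing each SC-signature inside the ambient super lattice $L$. Given $\sigma=(g,\{h_1,h_2\},q)$ in the finite substructure $L_0\subseteq L$, I must produce $x_1,x_2\in L$ that are sc-pure of sc-dimension $q$, satisfy the primitivity conditions \textbf{P1}--\textbf{P3}, and have signature $\sigma$. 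When $q<\scdim g$ (the case $x_1=x_2$, $h_1=h_2<g$) the Catenarity axiom applied to $h_1\leq g$ yields a $q$-sc-pure element between them, which after subtracting off $L_0$-data gives the required $x_1$. When $q=\scdim g$ (the case $x_1\neq x_2$, $h_1\join h_2=g^-$) the Splitting axiom applied to $g$ along $h_1,h_2$ produces the pair $x_1,x_2$ splitting $g$, whose signature is then read off directly. Since $L$ is existentially closed for these configurations, the embedding $L_0\gen{x_1,x_2}\hookrightarrow L$ exists over $L_0$.

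\medbreak\noindent\textbf{Conclusion.} Having shown that every one-point SC-primitive extension of a finite substructure can be realized inside any super $d$-scaled model, I would invoke the standard test for model-completion (the theory $\bar T$ is model-complete and every model of $T$ embeds in a model of $\bar T$): together with the decomposition of finite extensions into SC-primitive towers, this yields that super $d$-scaled lattices form the model-completion of $d$-subscaled lattices, whence quantifier elimination in $\lsc$ follows by the general equivalence between model-completion and QE for a theory whose substructures are the models of the companion universal theory.

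\medbreak\noindent\textbf{Main obstacle.} The delicate point is the matching of signatures: I must ensure that the $x_1,x_2$ extracted from Catenarity or Splitting in $L$ have \emph{exactly} the prescribed SC-signature $(g,H,q)$ in $L_0$ — in particular that $g=g(x_1,L_0)$ and $h_i=g^-\meet x_i$ hold on the nose, not merely up to sc-dimension. Controlling $g(x_i,L_0)$ requires subtracting away the contributions of other $\join$-irreducible components of $g$ and verifying via the \ssref{SS:Ck-a-moins-b} and \ssref{SS:Ci-inter-Cj} axioms that the trimmed elements remain sc-pure of the right sc-dimension; this is where Lemma~\ref{le:embed-split}'s careful bookkeeping (its guarantee that new irreducibles lie below $a$ and that predecessors are preserved) does the real work.
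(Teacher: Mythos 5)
Your overall strategy is the paper's: verify the two standard conditions for a model-completion, reduce finite extensions to towers of SC\--primitive extensions via Proposition~\ref{pr:SC-irr-sign-min}, and realize each SC\--signature inside an ambient super $d$\--scaled lattice using Catenarity and Splitting. Two points, however, need repair.

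First, and most seriously, your Case~1 ($q<\scdim g$, $h_1=h_2<g$) does not work as described. Applying Catenarity to $h_1\leq g$ (or rather to each sc\--pure component $\pc^i(h_1)$, since Catenarity requires the small element to be sc\--pure) produces a $q$\--sc\--pure $b$ with $h_1\leq b\leq g$, but gives no control whatsoever over $b\meet g^-$, which can be anything between $h_1$ and $g^-$; and since $h_1<g$ forces $h_1\leq g^-$, your proposed fix of ``subtracting off $L_0$\--data'' destroys the containment $h_1\leq x$ rather than trimming $x\meet g^-$ down to $h_1$. The paper's device is to use \emph{both} axioms in this case, in a specific order: first apply Splitting to $g$ along $h_1$ and $g^-$ (legitimate since $h_1\join g^-=g^-\ll g$ for $g$ $\join$\--irreducible), obtaining $y_1\geq h_1$, $y_2\geq g^-$ with $y_1\meet y_2=h_1\meet g^-=h_1$; then apply Catenarity to $\pc^i(h_1)\leq y_1$ and join the results. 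Any $x\leq y_1$ then satisfies $x\meet g^-\leq y_1\meet y_2=h_1$ automatically, which is exactly the on-the-nose signature matching you identify as the main obstacle. Relatedly, your appeal to ``Lemma~\ref{le:embed-split}'s careful bookkeeping'' is misplaced here: that lemma builds an abstract extension of a finite lattice and is used only in the other direction (to show existentially closed models satisfy Splitting); in the embedding direction one must work with the Splitting \emph{axiom} of the given super lattice $\hat L$, whose conclusion is much weaker than the lemma's internal bookkeeping.

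Second, your model-consistency step is silent on how Catenarity is achieved: ``one checks that every $d$\--subscaled lattice embeds into a model satisfying Catenarity'' is precisely the point where the paper invokes the linear representation (Proposition~\ref{pr:linear-repres}), embedding each finite piece into some $\llin(X)$, which is catenary because its spectrum is a catenary noetherian space, and then concludes by compactness and local finiteness (Theorem~\ref{th:TCS-treillis-type-fini}). Without naming that tool the claim is unsupported, since Catenarity is not reachable by iterating Lemma~\ref{le:embed-split}. Your Case~2 and the concluding reduction are correct and match the paper.
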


\begin{proof} 
  The last statement follows from the first one, as is usual for the
  model-completion of a universal theory. 
  By standard model-theoretic arguments it then suffices to prove that
  every existentially closed $d$\--subscaled lattice is super
  $d$\--scaled, and that for every super $d$\--scaled lattice $\hat
  L$, every finitely generated $d$\--subscaled lattice $L$ and every
  common $\lsc$\--substructure $L_0$, there is an embedding of $L$
  into $\hat L$ over $L_0$. 

  Let $L$ be an existentially closed $d$\--subscaled lattice, and
  $L_0$ a finitely generated substructure. By
  Theorem~\ref{th:TCS-treillis-type-fini}, $L_0$ is finite. By
  Proposition~\ref{pr:linear-repres}, $L_0$ $\lsc$\--embeds the
  $d$\--scaled lattice $\ldef(X)$ of some special linear set $X$,
  which is in particular a Catenary lattice. By the model-theoretic
  compactness Theorem it follows that $L$ is catenary. Similarly
  Theorem~\ref{th:TCS-treillis-type-fini}, Lemma~\ref{le:embed-split}
  and the model-theoretic compactness Theorem prove that $L$ has the
  Splitting property, hence $L$ is super $d$\--scaled. 

  Conversely assume that $\hat L$ is a super $d$\--scaled lattice, $L$ a
  finitely generated $d$\--subscaled lattice, and $L_0$ is a
  common $\lsc$\--substructure of both. By
  Theorem~\ref{th:TCS-treillis-type-fini} and
  Proposition~\ref{pr:SC-irr-sign-min}.\ref{it:SC-min}, we are reduced
  to the case where $L$ is a primitive extension of $L_0$. 
  Let $\sigma=(g,\{h_1,h_2\},q)$ be its SC\--signature. By
  Proposition~\ref{pr:SC-irr-sign-min}.\ref{it:SC-sgn-ext} it suffices
  to find a $x_1,x_2\in\hat L$ such that $(x_1,x_2)$ is SC\--primitive
  over $L_0$ and $\sigma_\SC(x_1,x_2)=\sigma$. We distinguish two cases, and let $g^-$
  denotes the predecessor of $g$ in $L_0$.

{\it Case 1}: $\scdim h_1< q<\scdim g$ and $h_1=h_2<g$. Let $p=\scdim
g$ and $r=\scdim h_1$. Let $y_1,y_2\in \hat L$ which split $g$ along
$h_1,g^-$. For $0\leq i\leq d$, either $i<q$ or $\pc^i(h_1)=\ZERO$ (because
$\scdim h_1=r<q$), hence $\scdim \pc^i(h_1)<q<\scdim g$. Recall that
$g$ is sc\--pure and $y_1=g-y_2\neq\ZERO$, so and $y_1$ has pure
sc\--dimension $p$ like $g$. The Catenarity property then applies to
$\pc^i(h_1)\leq y_1=\pc^p(y_1)$ and gives $x_i\in\hat L$ such that
$\pc^i(h_1)\leq x_i \leq y_1$ and $x_i$ has pure sc\--dimension $q$. Let
$x=\jjoin_{0\leq i\leq d}x_i$, by construction $h_1=\jjoin_{i\leq d}\pc^i(h_1)\leq x\leq y_1$
and $x$ has pure sc\--dimension $q$. In particular 
\[
  h_1\leq x\meet g^-\leq y_1\meet y_2 =h_1\meet g^-=h_1 
\]
hence $x\meet g^-=h_1\in L_0$. Moreover $x\meet g^-=h_1\ll x$ because $\scdim h_1<q$
and $x$ has pure sc\--dimension $q$. Finally $x\ll g$ because $\dim x_i=q<p$ and
$g$ has pure sc\--dimension $p$. Altogether this proves that $(x,x)$ is an
SC\--primitive tuple over $L_0$ with SC\--signature $\sigma$. 

{\it Case 2}: $q=\scdim g$ and $h_1\join h_2=g^-$. Let $y_1,y_2\in \hat L$
which split $g$ along $h_1,h_2$. By
construction $y_1\join y_2= g$, and since $g$ has pure sc\--dimension $q$
so does each $y_i$. In addition $y_1\meet y_2=h_1\meet h_2\in L_0$. Moreover
\[
  y_1\meet h_2\leq y_1\meet y_2=h_1\meet h_2 
\]
hence $y_1\meet (h_1\join h_2)=h_1\join(y_1\meet h_2)=h_1$. Since $h_1\join h_2=g^-$ it
follows that $y_1\meet g^-=h_1\in L_0$, and symmetrically $y_2\meet g^-=h_2\in
L_0$. So $(y_1,y_2)$ is an SC\--primitive tuple over $L_0$ with SC\--signature
$\sigma$. 
\end{proof}

\begin{remark}\label{re:sign-dans-hat-L}
  The proof of Theorem~\ref{th:model-completion} shows that if $L_0$
  is a finite $\lsc$\--substructure of a super scaled lattice $\hat
  L$, then every signature $\sigma$ in $L_0$ is the signature of an
  SC\--primitive extension of $L_0$ in $\hat L$. 
\end{remark}

The completions of the theory of super $d$\--scaled lattices 
are easy to classify. Let us say that a $d$\--subscaled lattice 
is {\df prime} if it does not contain any proper $d$\--subscaled 
lattice, or equivalently if it is generated by the empty set. Every 
prime $d$\--subscaled lattice is finite. By 
Corollary~\ref{cor:structures-a-n-generateurs} there exists 
finitely many prime $d$\--subscaled lattices up to isomorphism. 

\begin{corollary}\label{cor:completions}
The theory of super $d$\--scaled lattices containing (a copy of) 
a given prime $d$\--subscaled lattice is $\aleph_0$\--categorical, 
hence complete. It is also recursively axiomatizable, hence decidable. 
Every completion of the theory of super $d$\--scaled lattices
is of that kind, and the theory of super $d$\--scaled lattices
is decidable. 
\end{corollary}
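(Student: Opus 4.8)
The plan is to derive everything from the quantifier elimination of Theorem~\ref{th:model-completion} together with the local finiteness of Section~\ref{se:local-finiteness} and the signature calculus of Section~\ref{se:minimal-extensions}. Fix a prime $d$\--subscaled lattice $P$ and let $T_P$ be the theory of super $d$\--scaled lattices together with a sentence $\theta_P$ asserting that the substructure generated by $\emptyset$ is isomorphic to $P$. Since $P$ is finite and generated by the constants $\ZERO,\UN$, each of its elements is the value of a closed $\lsc$\--term; the complete atomic diagram of these finitely many terms is a single first\--order sentence, so $\theta_P$ is well defined and $T_P$ is recursively (indeed finitely) axiomatizable.

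First I would prove that $T_P$ is $\aleph_0$\--categorical. Let $M,N$ be countable models of $T_P$, each carrying a copy of $P$ as its $\emptyset$\--generated substructure; I identify these copies and set $f_0=\mathrm{id}_P$. I claim that the set of $\lsc$\--isomorphisms $f\colon A\to B$ between finite $\lsc$\--substructures $A\subseteq M$, $B\subseteq N$ extending $f_0$ forms a back\--and\--forth system. For the forth step, given such an $f$ and $a\in M$, the substructure $A\gen{a}$ is finite by Theorem~\ref{th:TCS-treillis-type-fini}, and by Proposition~\ref{pr:SC-irr-sign-min}.\ref{it:SC-prim-min} it is the top of a finite tower of SC\--primitive extensions, so it suffices to extend $f$ across a single SC\--primitive extension $A\gen{x_1,x_2}$. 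Its SC\--signature $\sigma$ in $A$ transports through $f$ to an SC\--signature in $B$, which by Remark~\ref{re:sign-dans-hat-L} is realized by an SC\--primitive extension of $B$ inside the super scaled lattice $N$; by the uniqueness up to isomorphism in Proposition~\ref{pr:SC-irr-sign-min}.\ref{it:SC-sgn-ext} this yields an extension of $f$. The back step is symmetric, so a standard enumeration of $M$ and $N$ produces an isomorphism, establishing $\aleph_0$\--categoricity.

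Completeness of $T_P$ then follows from the {\L}o\'s--Vaught test: by the Splitting Property every model is atomless, hence infinite, and an $\aleph_0$\--categorical theory with no finite models is complete. Being complete and recursively axiomatizable, $T_P$ is decidable. To see that every completion of the theory of super $d$\--scaled lattices arises this way, note that the $\emptyset$\--generated substructure of any $d$\--subscaled lattice is itself a prime $d$\--subscaled lattice, and by Corollary~\ref{cor:structures-a-n-generateurs} there are only finitely many such $P$ up to isomorphism; hence every model satisfies exactly one $\theta_P$, so every completion contains some $\theta_P$ and therefore coincides with the complete theory $T_P$. Finally, the full theory of super $d$\--scaled lattices is the intersection of its finitely many completions $T_P$, each of which is decidable, so it too is decidable: to test whether a sentence $\varphi$ is a theorem it suffices to check $\varphi\in T_P$ for each of the finitely many $P$.

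The only genuinely delicate point is the one\--point extension in the forth step, but this is precisely the amalgamation already carried out in the proof of Theorem~\ref{th:model-completion}: its content is that any signature over a finite substructure of a super scaled lattice is realized inside it (Remark~\ref{re:sign-dans-hat-L}), and that two extensions realizing the same signature are isomorphic over the base. Everything else is routine once quantifier elimination and local finiteness are in hand.
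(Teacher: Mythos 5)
Your proof is correct and follows essentially the same route as the paper: the paper's own argument is exactly a back-and-forth between countable models over the identified prime substructures, powered by Remark~\ref{re:sign-dans-hat-L} and the signature calculus of Proposition~\ref{pr:SC-irr-sign-min}, with the remaining claims dismissed as immediate consequences. You have merely filled in the routine details (local finiteness, the tower of SC\--primitive extensions, the {\L}o\'s--Vaught test, and the reduction of decidability to the finitely many completions given by Corollary~\ref{cor:structures-a-n-generateurs}), all of which the paper leaves implicit.
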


\begin{proof}
  Let $L$, $L'$ be any two countable super $d$\--scaled lattices
  containing isomorphic prime $d$\--subscaled lattices $L_0$ and
  $L'_0$. By Remark~\ref{re:sign-dans-hat-L} any partial isomorphism
  between $L$ and $L'$, extending the given isomorphism between $L_0$
  and $L'_0$, can be extended by a back and forth process. This proves the first
  statement. The other ones are immediate consequences. 
\end{proof}

\section{Atomic scaled lattices}
\label{se:atom-scal-latt}

Every super scaled lattice is atomless because of the Splitting
Property, hence none of the geometric scaled lattice amongst
$\sdef(\cK,d)$, $\szar(K,d)$, $\slin(K,d)$ can be super scaled. In
order to apply our study to some of them, we now introduce a variant of
subscaled lattices intended to protect atoms against splitting. 
\\

Let $\lasc=\lsc\cup\{\AT_k\}_{k\in\NN^*}$, with each $\AT_k$ a new unary
predicate symbol. For any $\lasc$\--structure $L$ we denote by
$\AT_k(L)$ the set of elements $a$ in $L$ such that $L\models\AT_k(a)$, and
we let $\AT_0(L)=L\setminus\bigcup_{k>0}\AT_k(L)$. We call $L$ an {\df
ASC-lattice} if its $\lsc$\--reduct is a scaled lattice and if
it satisfies the following condition.
\newcommand{\assref}[1]{$\rm ASC_{\ref{#1}}$}
\begin{list}{$\rm\bf ASC_{\theenumi}$:}{\usecounter{enumi}}
\setcounter{enumi}{-1}
\item
\label{ASS:def}
$(\forall k>0)$, $a\in\AT_k(L)$ if and only if $a$ is the join
of exactly $k$ atoms in $L$. 
\end{list}

\begin{remark}\label{re:ASC-axiom-AE}
  This condition can be expressed by $\forall\exists$ formulas in $\lasc$ by
  saying first that $\AT_1(L)$ is the set of atoms of $L$, and then
  that $\AT_k(L)$ is the set of elements of $L$ which are the join of
  exactly $k$ elements of $\AT_1(L)$.
\end{remark}

Every ASC-lattice obviously satisfies also the following
schemes (for $k,l>0$) of universal axioms:
\begin{list}{$\rm\bf ASC_{\theenumi}$:}{\usecounter{enumi}}
  \item\label{ASS:Atk-disjoints}
    $(\forall k,l>0,\ k\neq l),\quad \forall a,\ \AT_k(a)\to\lnot\AT_l(a)$
  \item\label{ASS:Atk-adb}
    $(\forall k>0),\quad \forall a,a_0,\dots,a_{2^k},\quad \AT_k(a)\;\longrightarrow$ 

    \centerline{
      $\displaystyle
        \left[
        \cconj_{0\leq i\leq 2^k}\bigl(a_i\leq a\bigr)\longrightarrow
        \ddisj_{0\leq i<j\leq 2^k}\bigl(a_i=a_j\bigr)
        \right] \bigwedge \scdim a= 0
      $
    }
  \item\label{ASS:Atk-atomes}
    $(\forall k>0)$,\quad $\forall a,a_1,a_2$, 

    \centerline{
      $\displaystyle
        \Bigl[\big(a=a_1\join a_2\big) \conj \big(a_1\meet a_2=\ZERO\big) \conj 
        \big(a_1\neq\ZERO\big) \conj \big(a_2\neq\ZERO\big)\Bigr]
      $
    }

    \centerline{
      $\displaystyle \longrightarrow\;
        \Biggl[\AT_k(a) \longleftrightarrow 
          \ddisj_{0< l< k} \big(\AT_l(a_1) \conj \AT_{k-l}(a_2)\big)
        \Biggr]
      $
    }
 \end{list}
We call {\df sub-ASC-lattices} the $\lasc$\--structures $L$ whose
$\lsc$\--reduct is a subscaled lattice and which satisfy
\assref{ASS:Atk-disjoints} to \assref{ASS:Atk-atomes} (but not
necessarily \assref{ASS:def}). 

The scheme \assref{ASS:Atk-disjoints} obviously means that
$(\AT_k(L))_{k\in\NN}$ is a partition\footnote{By a
  ``partition'' a set $S$, we mean here a collection of disjoint sets
  $X$ covering $S$. In particular, we do not require these sets $X$ to
  be non-empty.\label{fn:partition}} of $L$. For any $a\in
L$ we then define $\asc(a)$ as the unique $k\in\NN$ such that
$a\in\AT_k(L)$. 

The scheme \assref{ASS:Atk-adb} says that if $\asc(a)=k>0$ then $L(a)$ has at
most $2^k$ element and $\scdim(a)=0$. Then $\dim(a)=0$ by
\ssref{SS:scdim-inf-dim} so $L(a)$ is a co-Heyting algebra
with dimension $0$, hence a Boolean algebra. So \assref{ASS:Atk-adb}
actually says that $\scdim a=0$ and $L(a)$ is a Boolean algebra with
$n$ atoms for some non zero $n\leq k$. In particular every $a\in\AT_1(L)$
is an atom of $L$. 

The scheme \assref{ASS:Atk-atomes} says that if $a$ is the join of two
non-zero disjoint elements $a_1$, $a_2$ then $\asc(a)$ is non-zero if and only if
$\asc(a_1)$ and $\asc(a_2)$ are non-zero, in which case
$\asc(a)=\asc(a_1)+\asc(a_2)$. By a straightforward induction this
extends to any decomposition of $a$ as the join of finitely many
pairwise disjoint elements. In view of \assref{ASS:Atk-adb} it then
says that $\asc(a)>0$ if and only if $a$ is the join of finitely many
atoms $a_1,\dots,a_n$ of $L$ such that each $\asc(a_i)>0$, in which case
$\asc(a)=\sum_{1\leq i\leq n}\asc(a_i)$.

\begin{remark}\label{re:lasc-plongement}
  It follows immediately that a $\lsc$\--embedding of
  sub-ASC-lattices $\varphi\colon L\to L'$ is an $\lasc$\--embedding if and only
  if $\asc(a)=\asc(\varphi(a))$ for every {\it atom} $a\in L$.
\end{remark}

\begin{remark}\label{re:ASC-loc-fini}
  Obviously every finitely generated substructure of a
  sub-ASC-lattices is finite by the Local Finiteness
  Theorem~\ref{th:TCS-treillis-type-fini}, because $\lasc$ expands
  $\lsc$ only by relational symbols. 
\end{remark}

Every scaled lattice $L$ admits a unique structure of ASC-lattice
which is an expansion by definition of its lattice structure. We
denote by $L^{\rm At}$ this expansion of $L$. 

\begin{proposition}[Linear representation]\label{pr:repres-lin-ASC}
Let $K$ be an infinite field and $L_0$ be a finite sub-ASC-lattice.
For every integer $N\geq0$ there exists a special linear set $X_N$ over
$K$ and a $\lasc^*$\--embedding $\varphi_N\colon L_0\to\lalin(K^m)$ such that for
every atom $a$ of $L_0$ we have:
\begin{itemize}
  \item
    If $\asc(a)>0$ then $\asc(\varphi_N(a))=\asc(a)$.
  \item 
    If $\asc(a)=0$ then $\varphi_N(a)$ is greater than at least $N$ atoms. 
\end{itemize}
\end{proposition}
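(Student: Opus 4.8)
The plan is to replay the inductive construction of the linear representation for subscaled lattices (Proposition~\ref{pr:linear-repres}), prescribing along the way the images of the atoms of $L_0$. Recall that every atom of $L_0$ is $\join$-irreducible, hence sc-pure by \ssref{SS:Ck(a)-k-pure}, and note two consequences of \assref{ASS:Atk-adb}. First, any irreducible $a$ with $\asc(a)=k>0$ has $\scdim a=0$ and $L_0(a)$ a Boolean algebra, so $a$ is in fact an atom; under an $\lsc^*$-embedding into $\llin$ its image is then a pure $0$-dimensional special linear set, i.e.\ a finite set of points, whose $\asc$ in $\lalin$ equals its cardinality. Second, a non-atom irreducible cannot have a Boolean $L_0(a)$, so it has $\scdim\geq1$, hence $\asc=0$, and its image is positive-dimensional; as $K$ is infinite such an image lies above infinitely many points, thus above at least $N$ atoms of $\lalin$. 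So the only irreducibles whose $\asc$ is not accounted for freely are the atoms.

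I would then run the induction of Proposition~\ref{pr:linear-repres} verbatim, intervening only when the image of an atom $a$ is placed. Since each step peels off a maximal irreducible, no previously treated irreducible lies strictly above $a$ when $a$ is reached; as $a$ is an atom this forces $a\meet b=\ZERO$, so the set to be matched is $C=\varphi(a\meet b)=\ZERO$ and the image $A$ need only be a pure $\scdim a$-dimensional special linear set disjoint from the already-built part $B$. In the $0$-dimensional case I fix the cardinality of $A$ deliberately: exactly $k$ points off $B$ when $\asc(a)=k>0$, and at least $N$ points off $B$ when $\asc(a)=0$. Because $K$ is infinite I may enlarge the ambient space, $K^m\hookrightarrow K^{m+1}$, to make room for as many points away from $B$ as required, without disturbing any intersection condition already secured (exactly as in Proposition~\ref{pr:treillis-des-lineaires-speciaux}). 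The map so obtained still sends each $\join$-irreducible to an sc-pure element of the same sc-dimension, so it is an $\lsc^*$-embedding by Proposition~\ref{pr:CNS-de-lsc-plongement}; and by construction it reproduces $\asc$ exactly on every atom with $\asc>0$ and on every non-atom irreducible, while sending every atom with $\asc=0$ above at least $N$ atoms.

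The one subtle class, and the reason for the growing bound $N$, is that of the atoms with $\asc(a)=0$. When $\scdim a\geq1$ the image is positive-dimensional and lies above infinitely many atoms, so the $\asc=0$ condition and the ``$\geq N$ atoms'' condition hold simultaneously and the $\lasc^*$-version of Remark~\ref{re:lasc-plongement} applies directly. The genuinely delicate sub-case is an atom $a$ with $\asc(a)=0$ and $\scdim a=0$: sc-dimension preservation pins its image to a finite point set, which I make of size $\geq N$; here the role of $N$ is to furnish the finite approximations fed into the back-and-forth/limit argument of Section~\ref{se:mod-comp-ASC}, where repeated splitting of the $\asc=0$ part drives the number of atoms below such an $a$ to infinity and realises in the limit the atomless behaviour demanded by the model-completion. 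Organising this growth coherently---holding the $\asc>0$ atoms rigid at their prescribed cardinalities while inflating the $\asc=0$ atoms, all inside a single special linear ambient space and compatibly with every inherited intersection constraint---is the main obstacle; once it is arranged, the two asserted properties of $\varphi_N$ follow at once from Proposition~\ref{pr:CNS-de-lsc-plongement} and Remark~\ref{re:lasc-plongement}.
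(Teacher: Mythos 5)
Your proposal is correct and follows essentially the same route as the paper: replay the inductive linear representation of Proposition~\ref{pr:linear-repres}, using the freedom in Proposition~\ref{pr:treillis-des-lineaires-speciaux} (the case $C=\emptyset$, which is what an atom contributes) to prescribe the cardinality of each atom's image --- exactly $\asc(a)$ points when $\asc(a)>0$, at least $N$ points when $\asc(a)=0$ and $\scdim a=0$ --- while positive-dimensional images automatically lie above infinitely many atoms over an infinite field. The only differences are organizational and immaterial: the paper orders the peeling by decreasing sc-dimension so that all atom cardinalities are fixed at once in the base case $\scdim L_0=0$ (a Boolean algebra), whereas you fix them on the fly at order-maximal steps, and your closing appeal to Remark~\ref{re:lasc-plongement} is not quite needed since $\varphi_N$ is only required to be an $\lsc^*$\--embedding satisfying the two bulleted conditions (it is generally \emph{not} an $\lasc$\--embedding on the $\asc=0$ atoms of sc-dimension $0$).
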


\begin{proof}
By induction on lexicographically ordered tuples of integers $(r,s)$ 
we prove that the result is true for every finite sub-ASC-lattice
$L_0$ having $r$ $\join$\--irreducible elements, $s$ of which
have the same sc\--dimension as $L_0$. 

If $r=0$ then $s=0$ and the unique embedding of $L_0=\{\ZERO\}$ into
$\lalin(P)$, for an arbitrary point $P$ of $K$, has the required
property. So let us assume that $r\geq 1$ and that the result is proved
for every $(r',s')<(r,s)$. Let $d=\scdim L_0$ and $a_1,\dots,a_r$ be the
elements of $\cI(L_0)$ ordered by increasing sc\--dimension, so that
$\scdim a_r=d\geq 0$.

{\it Case~1}: $d=0$.
Then $L_0$ is a boolean algebra and $a_1,\dots,a_r$ are its atoms. 
Let $A_1,\dots,A_r$ be pairwise disjoint subsets of $K$ such that:
\begin{itemize}
  \item
    If $\asc(a_i)>0$ then $A_i$ has $\asc(a_i)$ elements, so
    $\asc(A_i)=\asc(a_i)$.
  \item
    If $\asc(a_i)=0$ then $A_i$ has $N$ elements, so $\asc(A_i)=N$. 
\end{itemize}
Let $X$ be the union of all these $A_i$'s. Clearly the map 
$\varphi$ which maps each $a_i$ to $A_i$ extends uniquely 
to an $\lsc$\--embedding of $L_0$ into $\lalin(X)$ which 
has the required properties.
\smallskip

{\it Case~2}: $d>0$.
The upper semi-lattice $L_0^-$ generated by $a_1,\dots,a_{r-1}$ is an
$\lasc^*$\--substructure of $L_0$ to which the induction hypothesis
applies. This gives for some integer $m$ a special linear set $B\subseteq K^m$
over $K$ and an $\lsc$\--embedding $\psi\colon L_0^-\to\lalin(B)$ having the
required properties. Let $C=\varphi(\UN_{L_0^-}\meet a_r)$ and $n=\scdim a_r$.
Proposition~\ref{pr:treillis-des-lineaires-speciaux} gives a special
linear set $A\subseteq K^{m+n}$ such that $A\cap B=C$. One can extend $\psi$ to an
$\lsc$\--embedding $\varphi$ of $L_0$ into $\lalin(A\cup B)$ exactly like in
the proof of Proposition~\ref{pr:linear-repres}. Then $\varphi$ inherits
from $\psi$ the required properties because all the elements $x\in
L_0$ such that $\asc(x)\neq0$ already belong to $L_0^-$. Indeed, $a_r$ is
the only $\join$\--irreducible element of $L_0$ which doesn't belong to
$L_0^⁻$, so every $a\in L_0\setminus L_0^-$ is greater than $a_r$. But $\scdim
a_r=d>0$ implies that $\scdim a>0$, hence $\asc(a)=0$ by
\assref{ASS:Atk-adb}. Moreover $\varphi(a)\geq\varphi(a_r)=A$ contains
infinitely many atoms (because $\dim A=d>0$), and the conclusion
follows. 
\end{proof}

Let $\sazar(K,d)$, $\salin(K,d)$, $\sadef(K,d)$ denote the class of
all ASC-lattices $L^{\rm At}$ for $L$ ranging over $\szar(K,d)$,
$\slin(K,d)$, $\sdef(K,d)$ respectively. 

\begin{corollary}\label{cor:repres-ASC-non-standard}
  For every integer $d\geq0$, the universal theories of $\sadef(\cK,d)$
  (resp. of $\sazar(K,d)$ or $\salin(K,d)$) is the same for every
  $o$\--minimal or $P$\--minimal expansion of a field $K$ (resp. every
  infinite field $K$). This is the theory of sub-ASC-lattices. 
\end{corollary}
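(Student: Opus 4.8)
The plan is to follow the proof of the analogous $\lsc$\--statement in Section~\ref{se:lin-rep}, using the atomic refinement Proposition~\ref{pr:repres-lin-ASC} in place of Proposition~\ref{pr:linear-repres} and Remark~\ref{re:ASC-loc-fini} in place of the Local Finiteness Theorem, but with one new ingredient forced by the possible presence of non-standard atoms.

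For the easy inclusion, note that every member of $\sadef(\cK,d)$, $\sazar(K,d)$ or $\salin(K,d)$ is of the form $L^{\rm At}$ with $L$ a $d$\--scaled lattice, hence an ASC-lattice and \emph{a fortiori} a sub-ASC-lattice; since the defining axioms of sub-ASC-lattices (namely \ssun--\ssref{SS:ll-scdim} together with \assref{ASS:Atk-disjoints}--\assref{ASS:Atk-atomes}) are all universal, the theory of sub-ASC-lattices is contained in the universal theory of each of the three classes. Moreover, exactly as in Remark~\ref{re:sclin-inclus-dans-scdef-et-sczar}, every $\lalin(X)$ is an $\lasc$\--substructure of $\lzar(X)^{\rm At}$ and of $\ldef(X)^{\rm At}$: it is an $\lsc$\--substructure by that remark, and since the atoms of $\llin(X)$ are precisely the points of $X$, which remain atoms with $\asc=1$ in $\lzar(X)$ and $\ldef(X)$, Remark~\ref{re:lasc-plongement} upgrades this to an $\lasc$\--embedding. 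As universal sentences pass to substructures, the universal theories of $\sazar(K,d)$ and $\sadef(\cK,d)$ are contained in that of $\salin(K,d)$. It therefore suffices to prove that every sub-ASC-lattice embeds, as an $\lasc$\--structure, into a model of $\mathrm{Th}(\salin(K,d))$.

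By Remark~\ref{re:ASC-loc-fini} every finitely generated sub-ASC-lattice is finite, so by the standard diagram/compactness argument it is enough to embed a \emph{finite} sub-ASC-lattice $L_0$ into a model of $\mathrm{Th}(\salin(K,d))$. This is where the argument departs from the $\lsc$\--case. Proposition~\ref{pr:repres-lin-ASC} gives, for each $N$, an $\lasc^*$\--embedding $\varphi_N\colon L_0\to\lalin(K^{m_N})$ that preserves $\asc$ on every atom with $\asc>0$ but sends each atom $a$ with $\asc(a)=0$ to an element lying above at least $N$ atoms. Setting $Y_N=\varphi_N(\UN_{L_0})$, a special linear set with $\scdim Y_N=\scdim L_0\leq d$, and restricting the codomain to $\lalin(Y_N)\in\salin(K,d)$ turns $\varphi_N$ into an honest $\lasc$\--embedding (now preserving $\UN$). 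I would then fix a non-principal ultrafilter $\cU$ on $\NN$, put $B=\prod_N\lalin(Y_N)/\cU$ and define $\Phi(x)=[\varphi_N(x)]_\cU$. Being an ultraproduct of members of $\salin(K,d)$, $B$ is a model of $\mathrm{Th}(\salin(K,d))$, hence in particular a sub-ASC-lattice. That $\Phi$ is an $\lsc$\--embedding follows from {\L}o\'s's theorem together with the injectivity of the $\varphi_N$, so by Remark~\ref{re:lasc-plongement} it remains only to check that $\asc$ is preserved on the atoms of $L_0$. If $\asc(a)=k>0$ then $\asc(\varphi_N(a))=k$ for all $N$, whence $\Phi(a)\in\AT_k(B)$; if $\asc(a)=0$ then $\asc(\varphi_N(a))$ is either $0$ or $\geq N$, so for each fixed $k\geq1$ the set $\{N\tq\asc(\varphi_N(a))=k\}$ is finite and therefore not in $\cU$, which gives $\asc(\Phi(a))=0$. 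Thus $\Phi$ is an $\lasc$\--embedding, and the general case follows by compactness as above.

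The main obstacle is precisely this last point. A finite sub-ASC-lattice may contain a $0$\--dimensional atom $a$ with $\asc(a)=0$, an atom unrecognised by the predicates $\AT_k$, and such an atom cannot be sent into any \emph{single} geometric lattice $\lalin(Y)$ while simultaneously preserving $\scdim$ and $\asc$: in $\lalin(Y)$ every element of sc\--dimension $0$ is a finite join of points and so has $\asc>0$. The parameter $N$ of Proposition~\ref{pr:repres-lin-ASC} exists exactly to drive the number of atoms below $\varphi_N(a)$ to infinity along $\cU$, so that $\asc(\Phi(a))=0$ in the ultraproduct; the care needed is in the {\L}o\'s computation above and in checking that passing to $\lalin(Y_N)$ disturbs neither the value of $\asc$ nor the $\lsc$\--operations on elements below $Y_N$. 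Everything else is a routine transcription of the $\lsc$\--argument of Section~\ref{se:lin-rep}.
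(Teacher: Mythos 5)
Your proposal is correct and follows essentially the same route as the paper: reduce to the class $\salin(K,d)$ via the substructure relation of Remark~\ref{re:sclin-inclus-dans-scdef-et-sczar}, reduce to finite $L_0$ by local finiteness and compactness, and then embed $L_0$ into a non-principal ultraproduct $\prod_N\lalin(X_N)/\cU$ of the linear representations of Proposition~\ref{pr:repres-lin-ASC}, checking preservation of $\asc$ on atoms exactly as you do (the case $\asc(a)=0$ being handled by the fact that $\asc(\varphi_N(a))$ avoids any fixed positive value for all large $N$). The only difference is that you spell out the passage from the $\lasc^*$\--embedding into $\lalin(K^{m_N})$ to an $\lasc$\--embedding into $\lalin(\varphi_N(\UN_{L_0}))$, a routine point the paper leaves implicit.
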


\begin{proof}
Since $\salin(K,d)$ is contained in the other classes, all of which
are contained in the class of ASC-lattices, it suffices to prove that
conversely every sub-ASC-lattice $\lasc$\--embeds into an
ultraproduct of elements of $\salin(K,d)$. By the model-theoretic
compactness theorem, it suffices to prove it for any finitely
generated sub-ASC-lattice $L_0$. 

By Theorem~\ref{th:TCS-treillis-type-fini}, $L_0$ is finite. For any
integer $N\geq0$ let $\varphi_N:L_0\to\lalin(X_N)$ be an $\lsc$\--embedding given
by Proposition~\ref{pr:repres-lin-ASC}. Let $\cU$ be a non principal
ultrafilter in the Boolean algebra of subsets of $\NN$, and consider the
ultraproduct $L=\prod_{N\in\NN}\lalin(X_N)/\cU$. Then $\varphi=\prod_{N\in\NN}\varphi_N/\cU$ is an
$\lsc$\--embedding of $L_0$ into the $L$. In order to prove that it is
an $\lasc$\--embedding, by Remark~\ref{re:lasc-plongement} it remains
check that for every atom $a$ of $L_0$, $\asc(\varphi(a))=\asc(a)$. So let
$a$ be an atom of $L_0$ and $k=\asc(a)$.

If $k>0$ then for every $N\geq k$, $\lalin(X_N)\models\AT_k(\varphi_N(a))$ by
construction. So $L\models\AT_k(\varphi(a))$, that is $\asc(\varphi(a))=k$. 

If $k=0$, let $l$ be any strictly positive integer. For every $N\geq l$,
$\lalin(X_N)\models\AT_N(\varphi_N(a))$ by construction, hence
$\lalin(X_N)\not\models\AT_l(\varphi_N(a))$. So $L\not\models\AT_l(\varphi(a))$, and this being
true for every $l>0$ it follows that $\asc(\varphi(a))=0$. 
\end{proof}

\section{Model-completion of atomic scaled lattices}
\label{se:mod-comp-ASC}

Let us call {\df super ASC\--lattices} those ASC-lattices 
which satisfy the following axioms, all of which are axiomatizable by 
$\forall\exists$\--formulas in $\lasc$. We are going to show that this 
theory is the model-completion of the theory of sub-ASC-lattices 
of dimension at most $d$ (resp. exactly $d$). 
\begin{description}
  \item[\bf Atomicity] 
    Every element $x$ is the least upper bound of the set of 
    atoms smaller than $x$. 
  \item[\bf Catenarity]
    For every non-negative integers $r\leq q\leq p$ and every elements $c\leq
    a\neq\ZERO$, if $c$ is $r$\--sc-pure and $a$ is $p$\--sc-pure then
    there exists a non-zero $q$-sc-pure element $b$ such that $c\leq b\leq a$.
  \item[\bf ASC-Splitting]
    For every $b_1,b_2,a$, if $b_1\join b_2\ll a\neq\ZERO$ and $\pc^0(a)=\ZERO$ 
    there exists non-zero elements $a_1\geq b_1$ and $a_2\geq b_2$ 
    such that:
    \[
      \left\{
        \begin{array}{l}
          a_1=a-a_2\\
          a_2=a-a_1\\
          a_1 \meet a_2 = b_1\meet b_2 
        \end{array}
      \right.
    \]
\end{description}

\begin{remark}\label{re:infinite-d'atomes}%
  An immediate consequence of the atomicity axiom is that for every
  elements $x,y$ in a super ASC-lattice $L$ such that $y<x$ and
  $\scdim(x-y)\geq1$, there are infinitely many atoms $a\in L$ such that $a\leq
  x$ and $a\meet y=\ZERO$. Indeed let $A$ be the set of atoms $a\in L$ such
  that $a\leq x-y$, and $B$ the subset of those $a$ such that $a\meet y=\ZERO$.
  Assume for a contradiction that $B$ is finite and let
  $b=\jjoin_{a\in B}a$. Note that $b\leq y$ and $\scdim b=\dim b=0$. Then by
  the Atomicity axiom
  \[
    x-y=\jjoin_{a\in A}a\leq y\join b,\mbox{ hence } x-y\leq (y\join b)-y=b-y\leq b.
  \]
  This implies that $\scdim(x-y)\leq \scdim b=0$, a contradiction. 
\end{remark}

The notions of ASC\--primitive tuples and ASC\--primitive extensions
are defined for sub-ASC-lattices exactly like for subscaled lattices.
Here is a typical example of what we are going to call an ASC-signature.

\begin{example}\label{ex:ASC-sig}
  Let $L_0$ be a finite sub-ASC-lattice, and $L$ an $\lsc$\--extension
  of $L_0$ generated by a (necessarily unique) SC\--primitive tuple
  $(x_1,x_2)$. Let $(g,\{h_1,h_2\},q)$ be the SC\--signature of $L$ in
  $L_0$ and $k_i=\asc(x_i)$. The following properties are immediate.
\begin{enumerate}
  \item
    If $q<\scdim g$ then $k_1=k_2$ (because $x_1=x_2$ in that case).
  \item
    If $q\neq 0$ then $k_1=k_2=0$ (because each $x_i$ has sc\--pure
    dimension $>0$ in that case) .
  \item
    If $k_1=0$ or $k_2=0$ then $\asc(g)=0$ (because $g\geq x_1\join x_2$).
  \item 
    If $k_1\neq 0$, $k_2\neq 0$ and $\scdim g=0$ then $\asc(g)=k_1+k_2$
    (because $g=x_1\join x_2$ in that case).
\end{enumerate}
\end{example}

We define {\df ASC-signatures} in a finite sub-ASC-lattice $L_0$ as
triples $(g,H,q)$ with $H$ a set of non-necessarily distinct couples
$(h_1,k_1)$, $(h_2,k_2)$ in $L_0\times\NN$, such that $(g,\{h_1,h_2\},q)$ is a
SC\--signature in the $\lsc$\--reduct of $L_0$ and all the conditions
enumerated in Example~\ref{ex:ASC-sig} hold true. In particular we
call the ASC\--signature in this example the {\df ASC\--signature} of
$L$ and of $(x_1,x_2)$ in $L_0$. Note that if $q<\scdim g$ then
$h_1=h_2$ because $(g,\{h_1,h_2\},q)$ is a SC\--signature.

The same argument as in
Proposition~\ref{pr:SC-irr-sign-min}.\ref{it:SC-sgn-ext} shows (using
Remark~\ref{re:lasc-plongement}) that two SC\--primitive extensions of
a finite sub-ASC-lattice $L_0$ are $\lasc$\--isomorphic over $L_0$ if
and only if they have the same ASC\--signature in $L_0$. 

\begin{lemma}\label{le:signature-et-extensions-ASC}
Let $L_0$ be a finite $\lasc$\--substructure of a super ASC-lattice
$\hat L$. Let $\sigma_{\rm At}=(g,q,\{(h_1,k_1),(h_2,k_2)\})$ be an
ASC-signature in $L_0$. Assume that $q\neq0$ or $k_1k_2\neq0$. Otherwise
assume that $\hat L$ is $\aleph_0$\--saturated. Then there exists a
primitive tuple $(x_1,x_2)\in\hat L$ over $L_0$ whose ASC-signature is
$\sigma_{\rm At}$. 
\end{lemma}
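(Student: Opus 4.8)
The plan is to re-run the proof of Theorem~\ref{th:model-completion}, but to keep track at every step of the atoms lying below the elements we build, invoking the $\aleph_0$\--saturation of $\hat L$ only to manufacture elements of $\asc$\--value $0$. Write $(g,\{h_1,h_2\},q)$ for the SC\--signature underlying $\sigma_{\rm At}$; by Remark~\ref{re:lasc-plongement} a tuple realizes $\sigma_{\rm At}$ as soon as it is SC\--primitive over $L_0$ with $\sigma_\SC(x_1,x_2)=(g,\{h_1,h_2\},q)$ and $\asc(x_i)=k_i$. I would split on whether $q=0$. Suppose first $q\neq0$. Then $g\in\cI(L_0)$ is sc\--pure of sc\--dimension $\scdim g\geq q\geq1$, so $\pc^0(g)=\ZERO$ by \ssdeux{}; hence the ASC\--Splitting axiom applies to $g$ in exactly the places where the unrestricted Splitting axiom was used in Theorem~\ref{th:model-completion} (splitting $g$ along $h_1,g^-$ when $q<\scdim g$, along $h_1,h_2$ when $q=\scdim g$). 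Running that argument verbatim yields an SC\--primitive tuple with the required SC\--signature whose components have pure sc\--dimension $q\geq1$, hence $\asc(x_i)=0$ by \assref{ASS:Atk-adb}; and condition~(2) of Example~\ref{ex:ASC-sig} forces $k_1=k_2=0$, so $\asc(x_i)=k_i$ and no saturation is needed.

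Suppose now $q=0<\scdim g$. The SC\--signature then forces $h_1=h_2=\ZERO$ and condition~(1) gives $k_1=k_2=:k$, so I set $x_1=x_2=x$ and must produce $x\leq g$ of pure sc\--dimension $0$ with $g^-\meet x=\ZERO$ and $\asc(x)=k$; since $\scdim x=0<\scdim g$ and $g$ is sc\--pure, \ssref{SS:Ck-a-moins-b} gives $g-x=g$, whence $g^-\meet x=\ZERO\ll x\ll g$ and $(x,x)$ is SC\--primitive with signature $(g,\{\ZERO,\ZERO\},0)$. Applying Remark~\ref{re:infinite-d'atomes} to $g^-<g$ (legitimate since $\scdim(g-g^-)=\scdim g\geq1$) produces infinitely many atoms of $\hat L$ below $g$ disjoint from $g^-$: for $k>0$ the join of $k$ of them is the desired $x$, and for $k=0$ such an $x$ realizes the type over $L_0\cup\{g,g^-\}$ expressing ``$x\leq g$, $x\meet g^-=\ZERO$, $\scdim x=0$, $\lnot\AT_l(x)$ for all $l>0$'', which is finitely satisfiable by the join of sufficiently many of these atoms and hence realized by saturation.

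Finally suppose $q=\scdim g=0$. Then $\dim g=0$ by \ssref{SS:scdim-inf-dim}, and as $g$ is $\join$\--irreducible this forces $g^-=\ZERO$ (otherwise $\ZERO\neq g^-\ll g$ would give $\dim g\geq1$); thus $h_1=h_2=\ZERO$ and $\hat L(g)$ is a Boolean algebra, so it suffices to exhibit disjoint non-zero $x_1,x_2$ with $x_1\join x_2=g$ and $\asc(x_i)=k_i$, relative complementation then giving $g-x_1=x_2$, $g-x_2=x_1$ and SC\--primitivity. If $k_1,k_2>0$, condition~(4) of Example~\ref{ex:ASC-sig} gives $\asc(g)=k_1+k_2$, so by \assref{ASS:def} $g$ is the join of $k_1+k_2$ atoms of $\hat L$, which I partition into groups of sizes $k_1$ and $k_2$. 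If some $k_i=0$, condition~(3) gives $\asc(g)=0$, so by the Atomicity axiom $g$ lies above infinitely many atoms; when exactly one $k_i$ vanishes I let $x_1$ be a join of $k_1$ atoms below $g$ and $x_2=g-x_1$, and the additivity in \assref{ASS:Atk-atomes} forces $\asc(x_2)=0$; when both vanish I use saturation to realize some $x_1\leq g$ with $\asc(x_1)=\asc(g-x_1)=0$, the corresponding type being finitely satisfiable precisely because $g$ has infinitely many atoms.

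The delicate point is the analysis when $q=0$. One must notice that ASC\--Splitting is unavailable exactly in the atom-carrying regime $\scdim g=0$ (where $\pc^0(g)=g\neq\ZERO$), so that the atom $g$ has to be split by hand; and one must be able to produce elements of $\asc$\--value $0$, that is, infinite joins of atoms, which is where Remark~\ref{re:infinite-d'atomes}, the Atomicity axiom and the $\aleph_0$\--saturation hypothesis are essential. Everything else is a transcription of the proof of Theorem~\ref{th:model-completion}.
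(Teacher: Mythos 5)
Your proof is correct and follows essentially the same route as the paper's: the same three-way case split (which the paper organizes as $\scdim g\geq1$ with $q\geq1$, $\scdim g\geq1$ with $q=0$, and $\scdim g=0$), the same appeal to the proof of Theorem~\ref{th:model-completion} via the observation that $\pc^0(g)=\ZERO$ makes ASC\--Splitting available when $\scdim g\geq1$, the same use of Remark~\ref{re:infinite-d'atomes} and of $\aleph_0$\--saturation to manufacture elements of $\asc$\--value $0$, and the same hand-splitting of the atom $g$ when $\scdim g=0$. The only blemish is a harmless indexing slip in the ``exactly one $k_i$ vanishes'' sub-case (you take the join of $k_1$ atoms, which presupposes $k_1\neq0$; by symmetry this is the intended reading and matches the paper).
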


\begin{proof}
Let $\sigma=(g,\{h_1,h_2\},q)$. This is a SC\--signature in $L_0$ (more
precisely in its $\lsc$\--reduct).
\smallskip

{\it Case~1:} $\scdim g \geq 1$ and $q\geq1$. Then $\pc^0(g)=0$ and by
definition of ASC\--signatures $k_1=k_2=0$. By
Remark~\ref{re:sign-dans-hat-L} there is an SC\--primitive tuple
$(x_1,x_2)$ in $\hat L$ with signature $\sigma$ in $L_0$. Moreover each
$\asc x_i=0$ (because $\scdim x_i=p\geq 1$) and each $k_i=0$, so the
ASC\--signature of $(x_1,x_2)$ is $\sigma_{\rm At}$.
\smallskip

{\it Case~2:} $\scdim g\geq 1$ and $q=0$. 
Then $\pc^0(g)=0$ again and since $\scdim(h_1\join h_2)<q=0$ by definition
of SC\--signatures we get that $h_1=h_2=\ZERO$. Finally $k_1=k_2$ by
definition of ASC-signatures since $q=0<\scdim g$. 
By Remark~\ref{re:infinite-d'atomes} there are
infinitely many atoms $z$ in $\hat L$ such that $z\leq g$ and $z\meet
g^-=\ZERO$. If $k_1>0$ let $x$ be the join of $k_1$ such atoms of
$\hat L$. Otherwise $\hat L$ is $\aleph_0$\--saturated by assumption hence
it contains an element $x\leq g$ of dimension $0$ such that
$x\meet g^-=\ZERO$ and $\hat L(x)$ has infinitely many atoms.
By the Atomicity Property $\asc(x)=0$. So in both cases $(x,x)$ is 
an SC\--primitive tuple over $L_0$ with ASC\--signature $\sigma_{\rm At}$.
\smallskip

{\it Case~3}: $\scdim g=0$.
Then $q=0$, $g$ is an atom of $L_0$ and $h_1=h_2=\ZERO$. In each of
the two remaining sub-cases, we build a tuple $(x_1,x_2)$ and leave as an
exercise to check that $(x_1,x_2)$ is SC\--primitive over $L_0$ with
ASC\--signature $\sigma_{\rm At}$. 

If $k_1$ and $k_2$ are non-zero then $\asc(g)=k_1+k_2$ 
hence $\hat L(g)$ contains $k_1+k_2$ atoms. Let $x_1$ be the join 
of $k_1$ of them and $x_2$ be the join of the others.

Otherwise, by symmetry we can assume that $k_1=0$. Then $\asc(g)=0$ by
definition of ASC\--signatures so $\hat L(g)$ contains infinitely many
atoms. By $\aleph_0$\--saturation it follows that $\hat L$ contains an
element $x$ smaller than $g$ such that both $\hat L(x)$ and $\hat L(g-x)$
contain infinitely many atoms, hence $\asc(x)=\asc(g-x)=0$. If $k_2=0$
let $(x_1,x_2)=(x,g-x)$. Otherwise let $x_2$ be the join of
$k_2$ atoms in $\hat L(g)$ and let $x_1=g-x_2$. 
\end{proof}

\begin{theorem}\label{th:model-completion-ASC}
The theory of super ASC-lattices of sc\--dimension at most $d$ 
(resp. exactly $d$) is the model-completion of the theory of 
ASC-lattices of dimension at most $d$ (resp. exactly $d$). In
particular, it eliminates the quantifiers in $\lasc$.
It admits $\aleph_0$ completions, each of which is decidable, 
and it is decidable.
\end{theorem}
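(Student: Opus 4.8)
The plan is to retrace the proof of Theorem~\ref{th:model-completion} step by step, replacing the scaled machinery of Sections~\ref{se:minimal-extensions}--\ref{se:model-completion} by its atomic refinement from Sections~\ref{se:atom-scal-latt}--\ref{se:mod-comp-ASC}; the only genuinely new feature is the bookkeeping of atoms encoded in $\lasc$. By Remark~\ref{re:cat-0} a sub-ASC-lattice satisfying Catenarity is automatically scaled, so the universal theory underlying the statement is that of sub-ASC-lattices of dimension $\le d$ (resp.\ $=d$, the latter being obtained by adjoining the first-order condition $\scdim\UN=d$), which by Corollary~\ref{cor:repres-ASC-non-standard} is also the universal theory of the geometric classes $\salin(K,d)$, $\sadef(\cK,d)$. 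As in Theorem~\ref{th:model-completion}, quantifier elimination is then a formal consequence of being a model-completion of a universal theory, so it suffices to prove two things: (1)~every existentially closed sub-ASC-lattice of dimension $\le d$ is a super ASC-lattice; and (2)~the theory of super ASC-lattices is model-complete and each sub-ASC-lattice embeds into a super one.

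For (1), let $L$ be existentially closed and $L_0$ a finitely generated---hence finite, by Remark~\ref{re:ASC-loc-fini}---$\lasc$-substructure. The axioms Catenarity and Atomicity are $\forall\exists$ and \emph{already hold} in the geometric lattices $\lalin(X)$: Catenarity because the $\lsc$-reduct of $\lalin(X)$ is a linear scaled lattice, Atomicity because over an infinite field every $A\in\lalin(X)$ is the least upper bound of its points. Since $L$ satisfies their common universal theory it embeds into an ultraproduct $L^{*}$ of such lattices, which still satisfies Catenarity and Atomicity by {\L}o\'s's theorem; each instance of these axioms with parameters in $L$ is witnessed in the extension $L^{*}$, and the existentially closed hypothesis pulls the witness back into $L$. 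The remaining axiom, ASC-Splitting, \emph{fails} in the geometric models and is instead supplied by Lemma~\ref{le:embed-split}, whose final clause produces the splitting without creating new atoms when $\pc^0(a)=\ZERO$; this is exactly what makes the extension an $\lasc$-embedding (Remark~\ref{re:lasc-plongement}), so local finiteness, this lemma and compactness force ASC-Splitting in $L$ just as Splitting was forced in Theorem~\ref{th:model-completion}.

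For (2), cofinality is obtained by iterating Lemma~\ref{le:embed-split}, the Catenarity construction and the creation of atoms over any given sub-ASC-lattice and passing to the union of the resulting chain, exactly as geometric models embed into existentially closed ones. To establish model-completeness I would use the embedding test for quantifier elimination in the form that permits the target to be taken $\aleph_0$-saturated. Thus let $\hat L$ be an $\aleph_0$-saturated super ASC-lattice, $L_0\subseteq\hat L$ a finite $\lasc$-substructure, and extend a given embedding $L_0\to\hat L$ by one further element of the ambient model; the finite sub-ASC-lattice so generated decomposes, as in the scaled case (Proposition~\ref{pr:SC-irr-sign-min}.\ref{it:SC-prim-min}), into a tower of ASC-primitive extensions of $L_0$, each carrying an ASC-signature, and each signature is realized inside $\hat L$ by Lemma~\ref{le:signature-et-extensions-ASC}. \textbf{The main obstacle}, and the reason the atomic case is more delicate than Theorem~\ref{th:model-completion}, is concentrated in the ASC-signatures with $q=0$ and $k_1k_2=0$: realizing one of these amounts to splitting a $0$-dimensional $\asc=0$ element---which in $\hat L$ carries infinitely many atoms by the ASC axioms and Remark~\ref{re:infinite-d'atomes}---into two pieces each still carrying infinitely many atoms. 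In the scaled setting super lattices are atomless and every signature is realized freely, whereas here such a balanced splitting need not exist in an \emph{arbitrary} super ASC-lattice; this is precisely the content of the $\aleph_0$-saturation hypothesis of Lemma~\ref{le:signature-et-extensions-ASC}, and it is why the embedding test must be run against a saturated $\hat L$.

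Finally, the completions and decidability follow the template of Corollary~\ref{cor:completions}. A completion is determined by the isomorphism type of the (finite) prime $\lasc$-substructure of its models together with the number of atoms marked by $\AT_1$, an invariant ranging over $\NN\cup\{+\infty\}$; since there are only finitely many prime $d$-subscaled shapes (Corollary~\ref{cor:structures-a-n-generateurs}), this yields exactly $\aleph_0$ prime models, classified in terms of the prime models of $\bar T_d$. Each completion is $\aleph_0$-categorical by a back-and-forth argument built on the realization of ASC-signatures in a saturated model (Lemma~\ref{le:signature-et-extensions-ASC}) and on Remark~\ref{re:lasc-plongement}, hence complete and decidable. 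As the theory is recursively axiomatized and its prime models are effectively enumerable with uniformly decidable theories, a sentence and its negation are each semi-decidable---by searching on one side for a proof from the axioms and on the other for a refuting prime model---so the theory of super ASC-lattices is itself decidable.
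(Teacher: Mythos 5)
Your treatment of the model-completion statement itself follows the paper's proof exactly: existentially closed sub-ASC-lattices acquire Atomicity and Catenarity from the linear representation (Proposition~\ref{pr:repres-lin-ASC}) and ASC-Splitting from the atom-preserving clause of Lemma~\ref{le:embed-split}, and the converse embedding is reduced to realizing ASC-signatures of primitive extensions in an $\aleph_0$-saturated super ASC-lattice via Lemma~\ref{le:signature-et-extensions-ASC}. Your identification of the signatures with $q=0$ and $k_1k_2=0$ as the place where saturation is genuinely needed is exactly right.

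The last paragraph, however, contains a genuine error: the completions of the theory of super ASC-lattices are \emph{not} $\aleph_0$-categorical, and the paper says so explicitly right after the theorem --- a countable \emph{standard} model (every element of sc-dimension $0$ is a finite join of atoms, e.g.\ the $p$-adic lattices of Section~\ref{se:appli-p-adic}) and a countable \emph{non-standard} one (produced by compactness) share the same prime $\lasc$-substructure, hence are elementarily equivalent by quantifier elimination, yet are not isomorphic. The back-and-forth you invoke breaks down for precisely the reason you yourself isolated two paragraphs earlier: realizing a signature with $q=0$ and $k_1=k_2=0$ requires $\aleph_0$-saturation, which an arbitrary countable model does not have (this is why Proposition~\ref{pr:ASC-categoric} restricts to standard models). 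Completeness and decidability of each completion must instead be extracted from quantifier elimination alone: by QE the complete theory of a super ASC-lattice is determined by the isomorphism type of its finite prime $\lasc$-substructure, so each completion is recursively axiomatized by the (recursive) super-ASC axioms together with the diagram of that finite structure, hence decidable; and the completions can be recursively enumerated because the prime $\lasc$-substructures are the finitely many prime $d$-subscaled lattices each equipped with one of countably many $\asc$-labellings of its $0$-sc-dimensional atoms (note the invariant is this full labelling, not merely the number of $\AT_1$-atoms). Decidability of the whole theory then follows as you indicate, searching for a proof on one side and for a refuting \emph{completion} (not a finite prime model, which is not itself a model of the theory) on the other.
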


\begin{proof}
We first only sketch the proof of the first statement, as it
essentially the same as for Theorem~\ref{th:model-completion}. 

On one hand, given a finite sub-ASC-lattice $L_0$, we can embed it in
an extension satisfying the Atomicity and Catenarity Property by
Proposition~\ref{pr:repres-lin-ASC}, and the ASC\--Splitting Property
by means of Lemma~\ref{le:embed-split} applied to any $a,b_1,b_2\in L_0$
such that $b_1\join b_2\ll a\neq\ZERO$ and $\pc^0(a)=\ZERO$ (note that this
last assumption ensures that the extension built in
Lemma~\ref{le:embed-split} is an $\lasc$\--extension). That every
existentially closed sub-ASC-lattice is a super ASC-lattice then
follows, by the model-theoretic compactness theorem. 

On the other hand, given an $\aleph_0$\--saturated super ASC-lattice $\hat
L$, a finite $\lasc$\--substructure $L_0$ and a finite extension $L$
of $L_0$, we reduce to the case where $L$ is SC\--primitive and let
$\sigma$ be its ASC\--signature in $L_0$.
Lemma~\ref{le:signature-et-extensions-ASC} gives an SC\--primitive
extension $L_1$ of $L_0$ in $\hat L$ with the same signature in $L_0$,
hence an embedding of $L_1$ into $\hat L$ over $L_0$ (which maps $L$
to $L_1$). This proves the first statement. 

Quantifier elimination follows, as usual for the model-completion of a
universal theory. Moreover there are finitely many $\emptyset$\--generated
subscaled lattices of dimension at most $d$ (resp. exactly $d$). Each
of them (except the trivial ones, in which $\ZERO=\UN$) can be
enriched with $\aleph_0$ different structures of sub-ASC-lattices obtained
as follows: given a finite $d$\--subscaled lattice $L$ and a
partition\footnote{Necessarily $X_k=\emptyset$ for all but finitely many
  $k$'s, see Footnote~\ref{fn:partition}.} $(X_k)_{k\in\NN}$ of the set of
  atoms $a$ of $L$ such that $\pc^0(a)=a$, we let $\asc(a)=k$ for
  every $a\in X_k$; we then expand $L$ to an $\lasc$\--structure
  according to \assref{ASS:Atk-atomes}. So the completions of the
theory of super ASC\--lattices, which are determined by their prime
model, can be recursively enumerated. 
\end{proof}

We say that a sub-ASC-lattice $L$ is {\df standard} if every element
of sc-dimension $0$ belongs to some $\AT_k(L)$ for some $k>0$. The
existence of standard super ASC-lattices (see
Section~\ref{se:appli-p-adic}) and non-standard super ASC-lattices (by
the model theoretic compactness theorem) implies that the theory of
super ASC-lattices containing a given prime sub-ASC-lattice is not
$\aleph_0$\--categoric, contrary to what happens for super scaled lattice.
However we can recover $\aleph_0$\--categorical by restricting to standard
models.

\begin{proposition}\label{pr:ASC-categoric}
  Let $L_1$, $L_2$ be two {\em standard} countable super ASC-lattices.
  Then every $\lasc$\--isomorphism from a finite sub-ASC-lattice
  $L_{1,0}\subset L_1$ to a sub-ASC-lattice $L_{2,0}\subset L_2$ extends to an
  $\lasc$\--isomorphism from $L_1$ to $L_2$. In particular $L_1$ and
  $L_2$ are isomorphic if and only if their prime
  $\lasc$\--substructures (those generated by the empty set)
  are isomorphic. 
\end{proposition}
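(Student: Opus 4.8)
The plan is to run a back\--and\--forth between the two countable lattices, using Lemma~\ref{le:signature-et-extensions-ASC} as the one\--step extension device and the Local Finiteness Theorem~\ref{th:TCS-treillis-type-fini} to keep every intermediate structure finite. Fix enumerations $L_1=\{y_n\}_{n\geq1}$ and $L_2=\{z_n\}_{n\geq1}$, start from the given $\lasc$\--isomorphism $\iota_0\colon L_{1,0}\to L_{2,0}$ between finite sub-ASC-lattices, and at successive stages force $y_n$ into the domain (forth) and $z_n$ into the range (back). By symmetry it suffices to describe a forth step.

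So suppose $\iota\colon M\to N$ is a partial $\lasc$\--isomorphism between finite sub-ASC-substructures $M\subset L_1$ and $N\subset L_2$, and fix $y\in L_1$. By Remark~\ref{re:ASC-loc-fini} the substructure $M\gen{y}$ is finite, and by Proposition~\ref{pr:SC-irr-sign-min}.\ref{it:SC-prim-min} it is the summit of a tower of SC\--primitive extensions $M=M_0\subset M_1\subset\cdots\subset M_t=M\gen{y}$. Extending $\iota$ one rung at a time, we may assume $\iota$ is already defined on $M_{i-1}$ (with image $N_{i-1}$) and must cross a single SC\--primitive extension $M_{i-1}\subset M_i$. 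Let $\sigma_{\rm At}=(g,q,\{(h_1,k_1),(h_2,k_2)\})$ be its ASC\--signature in $M_{i-1}$; applying $\iota$ componentwise yields an ASC\--signature $\iota(\sigma_{\rm At})$ in $N_{i-1}$, the defining conditions of Example~\ref{ex:ASC-sig} being preserved under isomorphism. Lemma~\ref{le:signature-et-extensions-ASC} then produces an SC\--primitive tuple in $L_2$ realising $\iota(\sigma_{\rm At})$ over $N_{i-1}$, and the uniqueness of SC\--primitive extensions with a prescribed ASC\--signature (recorded just before that lemma) shows that $\iota$ extends to an $\lasc$\--isomorphism of $M_i$ onto the corresponding SC\--primitive extension of $N_{i-1}$ inside $L_2$.

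The one genuine point is that Lemma~\ref{le:signature-et-extensions-ASC} dispenses with $\aleph_0$\--saturation only when $q\neq0$ or $k_1k_2\neq0$, whereas the countable $L_2$ need not be saturated; this is exactly where standardness is indispensable, and I expect it to be the main (though brief) obstacle. In the excluded case $q=0$ and $k_1k_2=0$, the SC\--primitive tuple $(x_1,x_2)$ generating $M_i$ over $M_{i-1}$ has some component $x_i\in L_1$ with $\scdim x_i=q=0$ and $\asc(x_i)=k_i=0$, that is a $0$\--dimensional element of $L_1$ with vanishing $\asc$. Standardness of $L_1$ rules out such an element, so no excluded signature ever arises in a forth step; symmetrically standardness of $L_2$ handles the back steps. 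Hence Lemma~\ref{le:signature-et-extensions-ASC} always applies in its saturation\--free form, the two directions close in the usual way, and the union of the resulting chain of partial isomorphisms is an $\lasc$\--isomorphism $L_1\to L_2$ extending $\iota_0$.

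Finally, the prime $\lasc$\--substructure of a super ASC-lattice is its unique substructure generated by $\emptyset$, and is finite. If the prime substructures of $L_1$ and $L_2$ are isomorphic, feeding that isomorphism into the statement just proved gives $L_1\cong L_2$; the converse is immediate, since any $\lasc$\--isomorphism $L_1\to L_2$ restricts to an isomorphism of their canonical prime substructures.
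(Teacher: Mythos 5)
Your proposal is correct and follows essentially the same route as the paper's proof: a back-and-forth driven by local finiteness, the decomposition of a finite extension into a tower of SC-primitive steps, transport of the ASC-signature along the partial isomorphism, and the observation that standardness forces $k_1k_2\neq 0$ whenever $q=0$, so that Lemma~\ref{le:signature-et-extensions-ASC} applies without $\aleph_0$-saturation. The only cosmetic difference is that the paper phrases the one-step extension via Proposition~\ref{pr:SC-irr-sign-min}.\ref{it:SC-sgn-ext} plus Remark~\ref{re:lasc-plongement} rather than the uniqueness remark preceding the lemma, which amounts to the same thing.
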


\begin{proof}
Let $\varphi$ be an $\lasc$\--isomorphism from $L_{1,0}$ to $L_{2,0}$. Pick
any element $x\in L_1\setminus L_{1,0}$. The subscaled lattice generated in
$L_1$ by $L_{1,0}\cup\{x\}$ (more precisely their $\lsc$\--reducts) is
finite hence by
Proposition~\ref{pr:SC-irr-sign-min}.\ref{it:SC-prim-min} there is a
chain $L_{1,0}\subset L_{1,1}\subset\cdots\subset L_{1,r}$ of SC\--primitive extensions of
subscaled lattices such that $L_{1,0}\cup\{x\}\subseteq L_{1,r}$. Endow each
$L_{1,i}$ with the $\lasc$\--structure induced by $L_1$. It suffices
to prove that $\varphi$ extends to an $\lasc$\--embedding $\varphi_1:L_{1,1}\to
L_2$. Indeed, repeating the argument will give an $\lasc$\--embedding
$\varphi_r:L_{1,r}\to L_2$ extending $\varphi$, and by symmetry the conclusion will
then follow by a back and forth argument.

Identifying $L_{1,0}$ with its image by $\varphi$ we can replace $L_{1,0}$
and $L_{2,0}$ by a common $\lasc$\--structure $L_0$ of $L_1$ and
$L_2$. Now $L_{1,1}$ is generated over $L_0$ by an SC\--primitive tuple
$(x_1,x_2)$ with signature $\sigma_{\rm At}= (g,\{(h_1,k_1),(h_2,k_2)\},q)$. 
In particular $q=\scdim x_i$ and $k_i=\asc(x_i)$ for $i=1,2$.
If $q=0$ then for each $i$, $\scdim x_i=0$ hence $k_i>0$ because $L_1$
is standard. In other words $q\neq0$ or $k_1k_2\neq0$ hence
Lemma~\ref{le:signature-et-extensions-ASC} gives an sc-primitive
tuple $(y_1,y_2)$ in $L_2$ with signature $\sigma_{\rm At}$. Let
$L_{2,1}$ be the $\asc$\--substructure of $L_2$ generated by
$L_{1,1}\cup\{y_1,y_2\}$. By
Proposition~\ref{pr:SC-irr-sign-min}.\ref{it:SC-sgn-ext} $\varphi$ extends
to an $\lsc$\--isomorphism $\varphi_1$ from $L_{1,1}$ to $L_{2,1}$ which
maps each $x_i$ to $y_i$. By construction $\asc(x_i)=\asc(y_i)$, and
by Proposition~\ref{pr:SC-irr-sign-min}.\ref{it:SC-Irr} $\varphi_1$ is the
identity map on $L_0$, so $\asc(\varphi_1(z))=\asc(z)$ for every
$z\in\cI(L_{1,1})$. Hence $\varphi_1$ is an $\lasc$\--isomorphism by
Remark~\ref{re:lasc-plongement}, which proves the result.
\end{proof}

\section{Applications to lattices of $p$-adic semi-algebraic sets}
\label{se:appli-p-adic}

In this section $K$ denotes a fixed $p$\--adically closed field.
For every semi-algebraic set $X$ contained in $K^m$ we let $L(X)$
denote the lattice of semi-algebraic subsets of $X$ closed in $X$,
endowed with its natural structure of ASC-lattice. Note that every
$A\in L(X)$ of dimension $0$ is finite, hence $L(X)$ is {\em standard}. 

As already mentioned in the introduction, the results of the previous
section lead us to conjecture in \cite{darn-2006} and finally to prove
in \cite{darn-2017b} the following result.

\begin{theorem}[Theorem~3.4 in \cite{darn-2017b}]\label{th:triang}
  Let $X$ be a non-empty semi-algebraic subset of $K^m$ without
  isolated points. Assume that $X$ is open in its topological closure
  $\overline{X}$ and let $Y_1,\dots,Y_s$ be a collection of closed
  semi-algebraic subsets of $\partial X=\overline{X}\setminus X$ such that $Y_1\cup\cdots
  \cup Y_s = \partial X$. Then there is a partition of $X$ in non-empty
  semi-algebraic sets $X_1,\dots,X_s$ such that $\partial X_i = Y_i$ for $1 \leq i \leq
  s$.
\end{theorem}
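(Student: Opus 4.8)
The plan is to first recast the conclusion in purely topological terms and then reduce to a single binary ``splitting'' step. Since $X$ is open in $\overline X$, the frontier $\partial X=\overline X\setminus X$ is closed in $K^m$ and disjoint from $X$. If $X_1,\dots,X_s$ are semi-algebraic with $\overline{X_i}\setminus X_i=Y_i\subseteq\partial X$, then $\overline{X_i}\cap X=X_i$, so each $X_i$ is closed in $X$; as they partition $X$ into finitely many relatively closed sets, each $X_i$ is in fact clopen in $X$, and conversely for clopen $X_i$ one has $\overline{X_i}\setminus X_i=\overline{X_i}\cap\partial X$. Thus the statement is equivalent to: \emph{partition $X$ into non-empty clopen semi-algebraic sets $X_i$ with $\overline{X_i}\cap\partial X=Y_i$}. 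I would prove this by induction on $s$, the case $s=1$ being trivial ($X_1=X$). For the inductive step I split off $Y_1$: putting $Y'=Y_1$ and $Y''=Y_2\cup\cdots\cup Y_s$, I seek a clopen decomposition $X=X_1\sqcup R$ with $\overline{X_1}\cap\partial X=Y'$ and $\overline R\cap\partial X=Y''$. Then $R$, being open in $\overline X$, is open in $\overline R$; an isolated point of $R$ would be isolated in $X$, so $R$ again has no isolated points; and $\partial R=\overline R\cap\partial X=Y''$ is covered by $Y_2,\dots,Y_s$. Hence the induction hypothesis applies to $R$, and everything reduces to the binary case of a cover $\partial X=Y'\cup Y''$ by two closed semi-algebraic sets.

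For this binary splitting I would use the $p$-adic valuation $v$ to pre-separate $X$. For $x\in X$ put $d'(x)=\sup\{v(x-y):y\in Y'\}$ and $d''(x)$ symmetrically (both finite, since $x\notin Y'\cup Y''$); the ultrametric inequality shows that $d'$ is constant on the clopen ball $\{x':v(x'-x)>d'(x)\}$, so $d'$ and $d''$ are locally constant, hence clopen-valued semi-algebraic functions from $X$ to the value group $\Gamma$ (a $\ZZ$-group). Let $P'=\{d'>d''\}$, $P''=\{d'<d''\}$ and $E=\{d'=d''\}$, a clopen semi-algebraic partition of $X$. A short ultrametric computation yields the crucial \emph{no-overflow} inclusions $\overline{P'}\cap\partial X\subseteq Y'$, $\overline{P''}\cap\partial X\subseteq Y''$ and $\overline E\cap\partial X\subseteq W$, where $W=Y'\cap Y''$: a boundary limit of $E$ lies arbitrarily close to both $Y'$ and $Y''$, hence, these being closed, in both. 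The same computation shows that every point of $Y'\setminus Y''$ is a limit of $P'$ and of no other piece, and symmetrically for $Y''\setminus Y'$. So setting $X_1\supseteq P'$ and $R\supseteq P''$ already realises the two covers correctly \emph{away from} the overlap $W$.

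The whole difficulty concentrates on $W=Y'\cap Y''$, where I must arrange that \emph{both} pieces accumulate at \emph{every} point, while respecting no-overflow. Since only the balanced region $E$ can be split safely between the two sides, I need (i) $W\subseteq\overline E$, and (ii) a clopen semi-algebraic partition $E=E'\sqcup E''$ with $W\subseteq\overline{E'}\cap\overline{E''}$. Point (ii) is exactly where the hypothesis that $X$ has no isolated points is decisive: it forces $\dim_w X\geq1$ at every $w\in\overline X$, because a $0$-dimensional $p$-adic semi-algebraic set is finite and would have isolated points. Thus near each $w\in W$ there is a one-dimensional semi-algebraic arc of $E$ limiting to $w$; after $p$-adic cell decomposition this arc is parametrised by a coordinate of unbounded valuation, which I split according to the parity of that valuation --- a definable operation in a $p$-adically closed field, since $\Gamma$ is a $\ZZ$-group. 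Both parities accumulate at $w$, the totally-disconnected phenomenon with no real analogue.

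The main obstacle is point (i) together with the \emph{global} coherence of the overlap splitting: guaranteeing $W\subseteq\overline E$ and performing the parity split uniformly over all of $W$ --- not branch by branch --- while keeping the pieces clopen, semi-algebraic and free of overflow, is not achieved by the distance functions alone. This is where I expect to need the full strength of $p$-adic cell decomposition (or the triangulation developed in the same paper): one stratifies a neighbourhood of $\partial X$ in $\overline X$ into cells compatible with $X$, $Y'$, $Y''$ and satisfying the frontier condition, so that each cell meeting $X$ approaches a determined union of boundary cells. The distance comparison then distributes whole cells to $X_1$ or $R$ without overflow, and precisely the cells accumulating at $W$ are the ones split by the valuation-parity device, uniformly in the base of the cell. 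Verifying that this gives $\overline{X_1}\cap\partial X=Y'$ and $\overline R\cap\partial X=Y''$ exactly, and that both pieces are non-empty --- with the degenerate cases $Y'=\emptyset$ or $Y'\subseteq Y''$ reduced to carving off a small clopen ball around an interior point --- would complete the argument.
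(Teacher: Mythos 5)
First, a remark on what ``the paper's own proof'' is here: this statement is not proved in the present paper at all. It is imported verbatim as Theorem~3.4 of the companion triangulation paper \cite{darn-2017b}, where it is obtained from the semi-algebraic triangulation machinery; the present paper only uses it as a black box (in the proof of Theorem~\ref{th:Ldef-p-adic-super}). So your attempt has to be judged as a proof of that external theorem.

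Your reductions are sound and genuinely useful: the translation into clopen pieces with prescribed trace on $\partial X$, the induction on $s$ reducing to a binary cover $\partial X=Y'\cup Y''$, and the pre-separation by the locally constant valuation-distance functions $d'$, $d''$ all check out (including the no-overflow inclusions $\overline{P'}\cap\partial X\subseteq Y'$, $\overline{P''}\cap\partial X\subseteq Y''$, $\overline{E}\cap\partial X\subseteq Y'\cap Y''$, and the fact that every point of $Y'\setminus Y''$ is approached only by $P'$). The degenerate cases are also handled plausibly. But the proof has a genuine gap exactly where the theorem's content lies, and you say so yourself: everything concerning $W=Y'\cap Y''$ is left open. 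You neither prove (i) that $W\subseteq\overline{E}$ (nor the weaker statement that would actually suffice, namely that every $w\in W$ lies in $\overline{P'}\cup\overline{E'}$ and in $\overline{P''}\cup\overline{E''}$), nor do you carry out (ii) the semi-algebraic, globally coherent splitting $E=E'\sqcup E''$ with both halves accumulating at every point of $W$. The valuation-parity device is only described for a single one-dimensional arc limiting to a single point $w$, and its uniformisation over all of $W$ --- which is where the ``no isolated points'' hypothesis and the total disconnectedness of $K$ must do real work, and where clopenness, semi-algebraicity and no-overflow must all be preserved simultaneously --- is deferred to an unspecified appeal to ``$p$-adic cell decomposition or the triangulation developed in the same paper''. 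Since the theorem is essentially equivalent in difficulty to that triangulation-based splitting result, this deferral is circular as it stands: the proposal identifies the crux correctly but does not overcome it, and so does not constitute a proof.
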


We can now combine this theorem with the results of
Section~\ref{se:appli-p-adic} in order to get the following
applications. 

\begin{theorem}\label{th:Ldef-p-adic-super}
  Let $X$ be any semi-algebraic subset of $K^m$. Then $L(X)$ is a
  super ASC-lattice. In particular its complete theory is decidable
  and eliminates quantifiers in $\lasc$. 
\end{theorem}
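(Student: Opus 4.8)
The plan is to show that $L(X)$ satisfies the three defining axioms of a super ASC-lattice—Atomicity, Catenarity and ASC-Splitting—and then to read off decidability and quantifier elimination from Theorem~\ref{th:model-completion-ASC}. Recall that $L(X)$ is already known to be a \emph{standard} ASC-lattice (its elements of sc-dimension $0$ being finite), so only these three geometric properties remain, and all of them will be obtained from the dimension theory and the triangulation of $p$-adic semi-algebraic sets from \cite{darn-2017b}.

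\emph{Atomicity and Catenarity.} The atoms of $L(X)$ are exactly the singletons $\{x\}$ with $x\in X$, since a point is closed in $X$ and has no proper nonempty closed subset. For any $A\in L(X)$ the only closed subset of $X$ containing every $\{x\}$ with $x\in A$ is $A$ itself, so $A$ is the least upper bound of the atoms below it, which gives Atomicity. For Catenarity I would argue exactly as in the $o$-minimal sketch preceding the Splitting axiom, replacing $o$-minimal triangulation by the $p$-adic triangulation of \cite{darn-2017b}: given $c\leq a$ with $a$ of pure sc-dimension $p$ and $c$ of pure sc-dimension $r\leq q\leq p$, triangulate the corresponding set $A$ compatibly with $C$. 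Purity of $A$ means every simplex is a face of a $p$-simplex; for each $r$-simplex $\tau$ of $C$ choose a $q$-face $\rho_\tau$ of such a $p$-simplex with $\tau\subseteq\rho_\tau$, and let $B$ be the subcomplex generated by the $\rho_\tau$. Then $B$ is a closed, pure $q$-dimensional semi-algebraic subset of $A$ containing $C$, i.e.\ a $q$-sc-pure element $b$ with $c\leq b\leq a$.

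\emph{ASC-Splitting.} This is the main point, and it is precisely what Theorem~\ref{th:triang} is designed to provide. Suppose $b_1\join b_2\ll a\neq\ZERO$ and $\pc^0(a)=\ZERO$, and write $A,b_1,b_2$ for the corresponding closed subsets of $X$. The hypothesis $\pc^0(a)=\ZERO$ says that $A$ has no isolated points, while $b_1\join b_2\ll a$ says that $F:=b_1\cup b_2$ is nowhere dense in $A$, i.e.\ $\overline{A\setminus F}\cap X=A$. Set $W=A\setminus F$; then $W$ is nonempty, open and dense in $A$, and a short argument (any point of $A$ isolated in $W$ would lie in $\overline F=F$, a contradiction) shows $W$ has no isolated points. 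Writing $G=\partial W\setminus X$ for the part of the frontier of $W$ lying outside $X$, one has $\partial W\cap X=F=b_1\cup b_2$, so $Y_1=b_1\cup G$ and $Y_2=b_2\cup G$ are closed subsets of $\partial W$ with $Y_1\cup Y_2=\partial W$. Applying the triangulation theorem to $W$ with this boundary data yields a partition $W=W_1\sqcup W_2$ with $\partial W_i=Y_i$, hence $\partial W_i\cap X=b_i$. Setting $a_i=\overline{W_i}\cap X$, one checks that $a_i=W_i\cup b_i\geq b_i$, that $a_1\meet a_2=(\partial W_1\cap\partial W_2)\cap X=b_1\meet b_2$, and that $a_i=a-a_j$ for $\{i,j\}=\{1,2\}$, so $a_1,a_2$ split $a$ along $b_1,b_2$.

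The step I expect to be delicate is arranging the hypotheses of Theorem~\ref{th:triang} for $W$: when $X$ is not open in its closure, $W$ need not be open in $\overline W$, and the frontier piece $G$ outside $X$ must be tracked with care. I would handle this by first applying the full triangulation of \cite{darn-2017b} to pass to a simplicial model of $A$, compatible with $b_1,b_2$, in which the required openness and the decomposition $\partial W=Y_1\cup Y_2$ become transparent, and only then invoke Theorem~\ref{th:triang} (or its combinatorial shadow). With Atomicity, Catenarity and ASC-Splitting in hand, $L(X)$ is a super ASC-lattice of sc-dimension $d=\dim X$. By Theorem~\ref{th:model-completion-ASC} the theory of super ASC-lattices of sc-dimension at most $d$ is the model-completion of the theory of ASC-lattices of dimension at most $d$; it eliminates quantifiers in $\lasc$ and is decidable, with $\aleph_0$ decidable completions classified by their finite prime $\lasc$-substructures. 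Since $L(X)$ is such a model, its complete theory is one of these completions: it eliminates quantifiers in $\lasc$ and is decidable, as claimed.
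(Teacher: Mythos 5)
Your overall strategy coincides with the paper's: check Atomicity, Catenarity and ASC-Splitting, derive the Splitting from Theorem~\ref{th:triang}, and conclude via Theorem~\ref{th:model-completion-ASC}. However, your execution of the Splitting step has a genuine gap, exactly at the point you flag as ``delicate''. You apply Theorem~\ref{th:triang} to $W=A\setminus(b_1\cup b_2)$ computed \emph{inside} $X$, and two hypotheses of that theorem then fail in general. First, $W$ need not be open in its closure $\overline{W}$ taken in $K^m$: $W$ is open in $A$, but $A$ need not be open in $\overline{A}$ when $X$ is not locally closed. Second, the sets $Y_i=b_i\cup G$ with $G=\partial W\setminus X$ need not be closed: one checks that $\partial W\cap X=b_1\cup b_2$, so $\overline{G}\cap X\subseteq b_1\cup b_2$, but nothing prevents $\overline{G}$ from meeting $b_2\setminus b_1$; then $Y_1$ is not closed, and replacing it by its closure inflates $\partial W_1\cap X$ beyond $b_1$ and destroys the identity $a_1\meet a_2=b_1\meet b_2$. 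Your proposed remedy --- first pass to a simplicial model in which ``the required openness and the decomposition become transparent'' --- is not an argument; it restates the difficulty.

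The paper resolves this with a simple change of ambient set: it applies Theorem~\ref{th:triang} to $W=\overline{A}\setminus(\overline{B}_1\cup\overline{B}_2)$, all closures taken in $K^m$. After checking that $\overline{B}_1\cup\overline{B}_2\ll\overline{A}$ (which follows from $B_1\join B_2\ll A$ and the inclusion $\overline{A}\setminus(\overline{B}_1\cup\overline{B}_2)\subseteq(\overline{A}\setminus A)\cup(A\setminus(B_1\cup B_2))$), one gets $\overline{W}=\overline{A}$, $W$ open in $\overline{W}$ because one removes a \emph{closed} subset of $K^m$, and $\partial W=\overline{B}_1\cup\overline{B}_2$ on the nose, so $Y_i=\overline{B}_i$ works with no extraneous piece $G$. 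The splitting sets are then pulled back by $A_i=\overline{W}_i\cap A$, and the verification reduces to density computations. You should adopt this reduction. A secondary remark on Catenarity: the paper does not obtain it from the triangulation but proves it directly in the appendix (Proposition~\ref{pr:dp-cat}) for dp-minimal expansions of valued fields with definable Skolem functions; your face-selection argument presupposes combinatorial properties of $p$-adic simplices (that a pure $p$-dimensional complex has, over each $r$-face of $C$, a $q$-dimensional face containing it) which are not established in this paper and would have to be extracted from \cite{darn-2017b}.
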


\begin{proof}
By construction $L(X)$ is an ASC-lattice satisfying the Atomicity
property. The Catenarity Property will be proved in the
  appendix in much more general settings
  (Proposition~\ref{pr:dp-cat}). We focus here to  the
Splitting Property. So let $A,B_1,B_2\in L(X)$ such that $B_1\cup B_2\ll A$
and $A$ has no isolated point. 

The same holds true for their closures in $K^m$, denoted
$\overline{A}$, $\overline{B}_1$, $\overline{B}_2$. 
Indeed $\overline{A}\setminus A\ll \overline{A}$ and
\begin{displaymath}
  \overline{A}\setminus (\overline{B}_1\cup\overline{B}_2) \subseteq 
  \big(\overline{A}\setminus A\big)\cup\big(A\setminus (B_1\cup B_2)\big).
\end{displaymath}
Apply Theorem~\ref{th:triang} to
$W=\overline{A}\setminus(\overline{B}_1\cup\overline{B}_2)$, $Y_1=\overline{B}_1$
and $Y_2=\overline{B}_2$. It gives a partition of $W$ in
non-empty semi-algebraic sets $W_1$, $W_2$ whose frontiers are respectively
$\overline{B}_1$, $\overline{B}_2$. Then
$\overline{W}_1\cup\overline{W}_2=\overline{A}$,
$\overline{W}_1\cap\overline{W}_2=\overline{B}_1\cap\overline{B}_2$ and each
$\overline{W}_i=W_i\cup\overline{B}_i$. Let $A_1=\overline{W}_1\cap A=(W_1\cap
A)\cup B_1$ and define $A_2$ accordingly. We have to check that
$A_1$, $A_2$ split $A$ along $B_1$, $B_2$. 

$A$ is dense in $\overline{A}=\overline{W}$ and
$W_1=\overline{W}\setminus(W_2\cup\overline{B}_1\cup\overline{B}_2)
=\overline{W}\setminus(\overline{W}_2\cup\overline{B}_1)$ is open in
$\overline{W}$, hence $A\cap W_1$ is dense in $W_1$. In particular $A\cap
W_1\neq\emptyset$, and symmetrically $A\cap W_2\neq\emptyset$. Clearly $A_1\cup A_2=A$, $A_1\cap
A_2=B_1\cap B_2$ and each $A_i\supseteq B_i$ by construction. So it only remains
to check that $A-A_1=A_2$ in $L(X)$, that is that the closure of $A\setminus
A_1$ in $X$ (hence in $A$) is $A_2$. Note that $A\setminus
A_1=A\setminus\overline{W}_1$ and
\[
  \overline{A}\setminus\overline{W}_1
  =\big(W_1\cup W_2\cup\overline{B}_1\cup\overline{B}_2\big)\setminus(W_1\cup\overline{B}_1\big)
  =W_2\cup\big(\overline{B}_2\setminus\overline{B}_1\big).
\]
In particular $A\setminus A_1=A\setminus\overline{W}_1=(\overline{A}\setminus\overline{W}_1)\cap
A$ contains $W_2\cap A$ and is contained in $(W_2\cup\overline{B}_2)\cap
A=\overline{W}_2\cap A=A_2$. The conclusion will follow, if we can prove
that $W_2\cap A$ is dense in $A_2$. Since $A_2=(W_2\cap A)\cup B_2$ it suffices
to check that $B_2\subseteq\overline{W_2\cap A}$. But this is clear since $W_2\cap
A$ is dense in $W_2$, hence in $\overline{W}_2=W_2\cup\overline{B}_2$.
\end{proof}

\begin{corollary}\label{co:appli-Ldef-Km}
  Let $F$ be a $q$\--adically closed field (for some prime $q$ not
  necessarily equal to $p$). Let $X\subseteq K^m$ and $Y\subseteq F^n$ be two
  semi-algebraic sets. 
  \begin{enumerate}
    \item 
      If  $m=n$, $K\preccurlyeq F$ and $X=Y\cap K^n$ then $L(X)\preccurlyeq L(Y)$.
    \item\label{it:appli-equiv}
      $L(X)\equiv L(Y)\iff$ their prime $\lasc$\--substructures are isomorphic. \\
      In particular $L(K^m)\equiv L(F^n)$ if and only if $m=n$. 
    \item\label{it:appli-isom}
      If $K$ and $F$ are countable then
      $L(X)\equiv L(Y)\iff L(X)\simeq L(Y)$.
  \end{enumerate}
\end{corollary}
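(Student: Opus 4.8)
The plan is to derive all three items from two facts established above: the quantifier elimination for super ASC-lattices (Theorem~\ref{th:model-completion-ASC}) and the near-categoricity of Proposition~\ref{pr:ASC-categoric}. Observe first that, by Theorem~\ref{th:Ldef-p-adic-super}, each of $L(X)$ and $L(Y)$ is a super ASC-lattice, and that it is \emph{standard} because over a $p$-adically (or $q$-adically) closed field every semi-algebraic set of dimension $0$ is finite, so every element of sc-dimension $0$ lies in some $\AT_k$ with $k>0$.

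For item~(1) I first produce a natural $\lasc$-embedding $\iota\colon L(X)\to L(Y)$. Reading the hypothesis $X=Y\cap K^n$ (with $K\preccurlyeq F$) as saying that $Y$ is the base change $X(F)$ of the $K$-definable set $X$, I set $\iota(A)=A(F)$ for $A\in L(X)$ defined over $K$; this is well defined and injective by elementarity, and sends $\UN_{L(X)}=X$ to $Y=\UN_{L(Y)}$. It is an $\lasc$-embedding: it commutes with $\join$ and $\meet$ at once; it commutes with the residuation $-$ and with the pure components $\pc^i$ because topological closure and local dimension are uniformly definable in the field, so these operations are absolute along $K\preccurlyeq F$; and it preserves every predicate $\AT_k$ because ``$A$ consists of exactly $k$ points'' and ``$\dim A\geq1$'' are first-order. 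Since $\iota$ is an $\lsc$-embedding it preserves $\scdim$, so $\scdim\UN_{L(X)}=\scdim\UN_{L(Y)}$; as both lattices are scaled this reads $\dim X=\dim Y$, a common value $d$. Both are therefore models of the theory of super ASC-lattices of sc-dimension exactly $d$, which eliminates quantifiers by Theorem~\ref{th:model-completion-ASC} and is hence model-complete; so $\iota$ is elementary and $L(X)\preccurlyeq L(Y)$.

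For item~(2), quantifier elimination shows that the complete theory of a super ASC-lattice is determined by the quantifier-free sentences it satisfies, that is, by the $\lasc$-isomorphism type of its prime substructure $\gen{\emptyset}$ (generated by the closed terms, and finite). Indeed two $\emptyset$-generated $\lasc$-structures with the same atomic sentences are isomorphic by matching closed terms, and conversely isomorphic prime substructures satisfy the same quantifier-free sentences; with QE this gives $L(X)\equiv L(Y)\iff\gen{\emptyset}_{L(X)}\cong\gen{\emptyset}_{L(Y)}$. For the final assertion I compute $\gen{\emptyset}_{L(K^m)}$: closing $\{\ZERO=\emptyset,\ \UN=K^m\}$ under the $\lasc$-operations produces nothing new, because $K^m$ is pure of dimension $m$ (so $\pc^m(\UN)=\UN$, $\pc^i(\UN)=\ZERO$ for $i\neq m$, and $\UN-\UN=\ZERO$), with $\scdim\UN=m$ and $\asc(\UN)=0$ for $m\geq1$ (while $\UN$ is an atom for $m=0$). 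This two-element invariant depends only on $m$, and two such structures are $\lasc$-isomorphic iff they share the same $\scdim\UN$; hence $L(K^m)\equiv L(F^n)\iff m=n$, regardless of $K$, $F$, $p$, $q$.

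For item~(3) the direction $\Leftarrow$ is trivial. For $\Rightarrow$, if $L(X)\equiv L(Y)$ then by item~(2) their prime $\lasc$-substructures are isomorphic; when $K$ and $F$ are countable, $X$ and $Y$ are countable and carry only countably many semi-algebraic subsets, so $L(X)$ and $L(Y)$ are countable standard super ASC-lattices, and Proposition~\ref{pr:ASC-categoric} yields $L(X)\cong L(Y)$. The main obstacle is the verification in item~(1) that $\iota$ is an $\lasc$-embedding: this is where the whole geometric content sits, resting on the uniform definability of closure, dimension and cardinality together with the transfer principle $K\preccurlyeq F$; everything else is a formal consequence of quantifier elimination and Proposition~\ref{pr:ASC-categoric}.
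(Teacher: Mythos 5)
Your proof is correct and takes essentially the same route as the paper: the paper derives items (1) and (2) directly from Theorem~\ref{th:Ldef-p-adic-super} (i.e.\ from quantifier elimination, hence model completeness, of the theory of super ASC-lattices with the prime substructure as complete invariant) and item (3) from Proposition~\ref{pr:ASC-categoric} applied to standard countable models, exactly as you do. The only difference is that you spell out why the base-change map is an $\lasc$\--embedding (definability of closure, local dimension and cardinality transferring along $K\preccurlyeq F$), a verification the paper leaves implicit.
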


\begin{proof}
  The two first points follow immediately from
  Theorem~\ref{th:Ldef-p-adic-super}. Note that $L(K^m)\equiv L(F^m)$ is
  a special case because their prime sublattice is just the
  two-element lattice with the same $\lasc$\--structure, because $K^m$
  and $F^m$ both have pure dimension $m$. The last point follows from
  Proposition~\ref{pr:ASC-categoric} since both $L(X)$ and $L(Y)$ are
  standard and countable.
\end{proof}

Given a pair of semi-algebraic sets $X\subseteq K^m$ and $Y\subseteq F^n$, we say that
a homeomorphism $\psi:X\to Y$ is {\df pre-algebraic} if for every
semi-algebraic sets $A\subseteq X$ and $B\subseteq Y$ defined over $K$ and $F$
respectively, $\psi(A)$ and $\psi^{-1}(B)$ are still semi-algebraic sets
defined over $K$ and $F$. It is obviously sufficient to check this for
semi-algebraic sets $A$, $B$ closed in $X$, $Y$ respectively. In
other words, a bijection $\psi:X\to Y$ is a pre-algebraic if and only if
taking direct images by $\psi$ defines an $\lasc$\--isomorphism from
$L(X)$ to $L(Y)$ (which also ensures that $\psi$ is a homeomorphism). 
When $K=F$, semi-algebraic homeomorphisms are obviously pre-algebraic.
The converse is false, as the following example shows. 

\begin{example}\label{ex:pre-vs-semi}
  Assume that the $p$\--valuation of $K$ has value group $\ZZ$, and let
  $R$ be its valuation ring. Applying Theorem~\ref{th:pre-alg-iso}
  below to $X=K$ and $Y=R$ gives a pre-algebraic homeomorphism $\varphi:K\to
  R$. Since its value group is $\ZZ$, the $p$\--valuation defines
  a metric on $K$ and its completion $K'$ is known to be an
  elementary extension of $K$. If $\varphi$ would be semi-algebraic, it
  would then uniquely extend to a semi-algebraic homeomorphism from
  $K'$ to its $p$\--valuation ring $R'$. But this is not possible
  because $R'$ is compact and $K'$ is not. Thus $\varphi$ is not
  semi-algebraic. 
\end{example}

\begin{theorem}\label{th:pre-alg-iso}
  Let $K$, $F$ be {\em countable} $p$\--adically closed fields, and $X\subseteq
  K^m$, $Y\subseteq F^n$ be two semi-algebraic sets. Let $L^0(X)$ and $L^0(Y)$
  be the prime $\lasc$\--substructures of $L(X)$ and $L(Y)$ respectively.
  Then $X$ and $Y$ are pre-algebraically homeomorphic if and only if
  $L^0(X)$ and $L^0(Y)$ are $\lasc$\--isomorphic. In particular, any
  two semi-algebraic sets over $K$ and $F$ with the same pure dimension $d\geq1$
  are pre-algebraically homeomorphic.
\end{theorem}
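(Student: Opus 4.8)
The plan is to reduce the equivalence to the $\aleph_0$\--categoricity of standard models, Proposition~\ref{pr:ASC-categoric}, exploiting the fact recorded just before the theorem: a bijection $\psi\colon X\to Y$ is pre-algebraic if and only if its direct-image map is an $\lasc$\--isomorphism from $L(X)$ onto $L(Y)$. Throughout I would use that, by Theorem~\ref{th:Ldef-p-adic-super}, both $L(X)$ and $L(Y)$ are super ASC-lattices; that they are \emph{standard} (every element of sc\--dimension $0$ is finite, hence a join of atoms, as noted at the start of this section); and that they are countable because $K$ and $F$ are.

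For the direction in which $X$ and $Y$ are pre-algebraically homeomorphic, a pre-algebraic homeomorphism $\psi$ yields, by the quoted fact, an $\lasc$\--isomorphism $\Phi\colon L(X)\to L(Y)$. Since any isomorphism maps the substructure generated by the empty set onto the substructure generated by the empty set, $\Phi$ restricts to an isomorphism $L^0(X)\simeq L^0(Y)$, as wanted.

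For the converse, assume $L^0(X)\simeq L^0(Y)$. As $L(X)$ and $L(Y)$ are standard countable super ASC-lattices, Proposition~\ref{pr:ASC-categoric} produces an $\lasc$\--isomorphism $\Phi\colon L(X)\to L(Y)$, and it remains to recover from $\Phi$ an honest bijection of the underlying sets. Here I would use that the atoms of $L(X)$ are exactly the singletons $\{x\}$ with $x\in X$ (a point is closed and semi-algebraic, hence lies in $L(X)$, and is clearly a minimal non-zero element), and likewise for $Y$. Since $\Phi$ and $\Phi^{-1}$ preserve atoms, $\Phi$ induces a bijection $\psi\colon X\to Y$ by $\Phi(\{x\})=\{\psi(x)\}$. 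The key computation is that $\Phi$ coincides with the direct-image map of $\psi$: for every $A\in L(X)$ and $y\in Y$ one has $y\in\psi(A)\Leftrightarrow\{y\}\leq\Phi(A)\Leftrightarrow y\in\Phi(A)$, using only that $\{x\}\leq A\Leftrightarrow x\in A$ together with the preservation of atoms by $\Phi$ and $\Phi^{-1}$; hence $\Phi(A)=\psi(A)$. Thus the direct-image map of $\psi$ \emph{is} the $\lasc$\--isomorphism $\Phi$, and the quoted fact immediately yields that $\psi$ is a pre-algebraic homeomorphism. I expect this last translation, from the abstract lattice isomorphism back to a geometric map, to be the main obstacle; the point that makes it work is that atomicity of super ASC-lattices renders the underlying points first-order visible inside the lattice, so no information is lost.

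Finally, for the ``In particular'' clause I would compute $L^0(X)$ when $X$ has pure dimension $d\geq1$. The prime substructure is generated by $\ZERO=\emptyset$ and $\UN=X$ and closed under the $\lasc$\--operations; since $X$ is pure of dimension $d$ we have $\pc^d(\UN)=\UN$ and $\pc^i(\UN)=\ZERO$ for $i\neq d$, while $\scdim\UN=d\geq1$ forces $\asc(\UN)=0$ by \assref{ASS:Atk-adb}, and $\asc(\ZERO)=0$ trivially. Hence $L^0(X)$ is the two-element chain $\{\ZERO,\UN\}$ with $\UN$ purely of sc\--dimension $d$ and no element in any $\AT_k$ for $k>0$; the same description holds for $L^0(Y)$. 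Therefore $L^0(X)\simeq L^0(Y)$, and the equivalence already established applies. (The hypothesis $d\geq1$ is essential: for $d=0$ the sets are finite and $\asc(\UN)$ records their cardinality, which a homeomorphism must match.)
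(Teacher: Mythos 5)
Your proposal is correct and follows essentially the same route as the paper: the forward direction by restricting the induced isomorphism to the prime substructures, the converse by extending the given isomorphism via Proposition~\ref{pr:ASC-categoric} and then reading off a point bijection from the action on atoms (which are exactly the singletons), and the ``in particular'' clause by noting that pure dimension $d\geq1$ forces the prime substructure to be the two-element chain with $\asc(\UN)=0$. Your write-up merely makes explicit two details the paper leaves implicit, namely the verification that $\Phi(A)=\psi(A)$ and the computation of $L^0(X)$.
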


\begin{proof}
One direction is obvious: every pre-algebraic homeomorphism $\psi:X\to Y$ 
induces an $\lasc$\--isomorphism from $L(X)$ to $L(Y)$, which maps
their respective prime $\lasc$\--substructures one to each other.
Conversely, assume that an $\lasc$\--isomorphism is
given from $L^0(X)$ to $L^0(Y)$. By Proposition~\ref{pr:ASC-categoric}
it extends to an $\lasc$\--isomorphism $\varphi:L(X)\to L(Y)$. For every $t\in
X$, $\varphi$ maps $\{t\}$ to an atom $\{t'\}$ of $L(Y)$. Let $\psi(t)=t'$,
this defines a bijection $\psi:X\to Y$ such that $\psi(A)=\varphi(A)$ for every $A\in
L(X)$, hence $\psi$ is a pre-algebraic homeomorphism. The last statement
follows.
\end{proof}

\section{Appendix: scaled lattices in tame topological structures}
\label{se:appendix}

We have claimed that $\ldef(X)$ in Example~\ref{ex:Ldef} is a scaled
lattice. In order to prove this, we first need a simpler axiomatisation of
scaled lattices.

\begin{fact}\label{fa:pure-dim}
  Let $L$ be a co-Heyting algebra and $a\in L$ an $i$\--pure element.
  For every $b\in L$, if $\dim b<i$ then $a-b=a$. 
\end{fact}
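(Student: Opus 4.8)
The plan is to derive the identity from a single application of the $i$\--purity of $a$, applied not to $b$ but to the element $a-b$ itself. The starting observation is that, since $a-b\leq a$ always holds, the desired equality $a-b=a$ is equivalent to $a\leq a-b$, i.e.\ to $a-(a-b)=\ZERO$. So it suffices to show that the ``defect'' $d:=a-(a-b)$ is zero.

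First I would bound $d$ from above by $b$. Using $a=(a\meet b)\join(a-b)$ (rule \tcref{TC:a=(a-inter-b)-union-(a-b)}) and distributing the subtraction over the join (rule \tcref{TC:(x-union-y)-z}),
\[
d=a-(a-b)=\big((a\meet b)-(a-b)\big)\join\big((a-b)-(a-b)\big).
\]
The second summand vanishes because $c-c=\ZERO$ for every $c$, so $d=(a\meet b)-(a-b)\leq a\meet b\leq b$. As $b'\leq b$ implies $\dim b'\leq\dim b$ (a consequence of $\dim(x\join y)=\max(\dim x,\dim y)$), this gives $\dim d\leq\dim b<i$.

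Finally I would feed the witness $a-b$ into the purity hypothesis: by definition of $i$\--purity, $a-(a-b)\neq\ZERO$ would force $\dim\big(a-(a-b)\big)=i$, whereas $\dim d<i$; hence $d=\ZERO$, which is exactly what was needed. The argument is uniform and requires no case distinction (when $a=\ZERO$ one has $d=\ZERO$ directly, $\ZERO$ being vacuously $i$\--pure). I expect no real obstacle: everything reduces to the elementary rules \tcref{TC:a=(a-inter-b)-union-(a-b)} and \tcref{TC:(x-union-y)-z} together with monotonicity of dimension, and the only genuinely non\--routine idea is to test purity against $a-b$ rather than trying to manipulate $a-b$ directly.
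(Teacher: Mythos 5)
Your proof is correct and is essentially the paper's own argument: both hinge on applying the $i$\--purity of $a$ to the witness $a-b$ and on bounding $a-(a-b)=(a\meet b)-(a-b)\leq b$ via \tcref{TC:a=(a-inter-b)-union-(a-b)} and \tcref{TC:(x-union-y)-z}. The only difference is presentational — you argue directly that the defect is $\ZERO$ where the paper runs the same computation as a proof by contradiction.
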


\begin{proof}
  Let $b'=a-b$, and assume for a contradiction that $b'\neq a$. Then
  $\dim a-b'=i$ because $a$ is $i$\--pure. But $a=(a\meet b)\join(a-b)$ by
  \tcref{TC:a=(a-inter-b)-union-(a-b)}, so $a-b'=(a\meet b)-(a-b)$ by
  \tcref{TC:(x-union-y)-z}. In particular $a-b'\leq a\meet b\leq b$, so $\dim
  a-b'\leq \dim b<i$, a contradiction.
\end{proof}

Given a co-Heyting algebra $L$, let us say that an element $a\in L$
has a {\df pure decomposition} in $L$ if for some integer $k$,
$a=\jjoin_{0\leq i\leq k}a_i$ with each $a_i$ an $i$\--pure element of $L$ and
$\dim a_i\meet a_j<\min(i,j)$ for every $i\neq j$. Of course in that case
$\dim a$ is the largest integer $i$ such that $a_i\neq\ZERO$. 

\begin{proposition}\label{pr:unique-pure-dec}
  If an element $a$ in a co-Heyting algebra $L$ has a pure
  decomposition $a=\jjoin_{0\leq i\leq d}a_i$ then $a_d$ is the largest
  $d$\--pure element in $L$ smaller than $a$, and $a-a_d=\jjoin_{0\leq i<
  d}a_i$. In particular, such a pure decomposition (with fixed $d$) is
  unique. 
\end{proposition}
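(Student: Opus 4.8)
The plan is to establish the two displayed identities first and then derive uniqueness from them by a downward induction on $d$. Throughout I would use three elementary facts about the relative complement: that $x\mapsto x-b$ is monotone (if $c\leq a$ then $c-b\leq a-b$, by minimality of the relative complement), that $\dim$ is monotone for $\leq$, and that $a-b$ depends on $b$ only through $a\meet b$, i.e.
\[
  a-b=a-(a\meet b).
\]
This last identity is immediate from the definition of $-$: by distributivity $a\leq b\join c$ holds if and only if $a\leq(a\meet b)\join c$ holds, so $a-b$ and $a-(a\meet b)$ are the least $c$ satisfying the same condition.

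First I would prove that $a-a_d=\jjoin_{0\leq i<d}a_i$. By the iterated form of \tcref{TC:(x-union-y)-z} we have $a-a_d=\jjoin_{0\leq i\leq d}(a_i-a_d)$, and $a_d-a_d=\ZERO$, so it suffices to show $a_i-a_d=a_i$ for each $i<d$. Fix such an $i$. The pure-decomposition hypothesis gives $\dim(a_i\meet a_d)<\min(i,d)=i$, hence by Fact~\ref{fa:pure-dim} (applied to the $i$\--pure element $a_i$ with $b=a_i\meet a_d$) we get $a_i-(a_i\meet a_d)=a_i$. Combined with the identity $a_i-a_d=a_i-(a_i\meet a_d)$ this yields $a_i-a_d=a_i$. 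The one delicate point here, which I expect to be the main obstacle, is that one cannot apply Fact~\ref{fa:pure-dim} directly with $b=a_d$, since $\dim a_d=d\geq i$; collapsing $a_d$ to $a_i\meet a_d$ is exactly what makes the dimension drop below $i$.

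Next I would show that $a_d$ is the largest $d$\--pure element of $L$ below $a$. It is $d$\--pure and below $a$ by hypothesis, so let $c\leq a$ be any $d$\--pure element and argue $c\leq a_d$. By monotonicity of $-$ and the previous step, $c-a_d\leq a-a_d=\jjoin_{0\leq i<d}a_i$. Since each $a_i$ with $i<d$ is $i$\--pure it satisfies $\dim a_i\leq i<d$, whence $\dim\bigl(\jjoin_{i<d}a_i\bigr)<d$ and therefore $\dim(c-a_d)<d$ by monotonicity of $\dim$. If $c-a_d\neq\ZERO$ this contradicts the $d$\--purity of $c$, which would force $\dim(c-a_d)=d$. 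Hence $c-a_d=\ZERO$, that is $c\leq a_d$.

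Finally I would derive uniqueness by induction on $d$. For $d=0$ the decomposition is just $a=a_0$, which is trivially unique. For the inductive step, suppose $a=\jjoin_{0\leq i\leq d}a_i=\jjoin_{0\leq i\leq d}a'_i$ are two pure decompositions with the same top index $d$. By the second step both $a_d$ and $a'_d$ equal the largest $d$\--pure element below $a$, so $a_d=a'_d$; by the first step $\jjoin_{i<d}a_i=a-a_d=a-a'_d=\jjoin_{i<d}a'_i$. The families $(a_i)_{i<d}$ and $(a'_i)_{i<d}$ are pure decompositions (with top index $d-1$) of this common element, so the induction hypothesis gives $a_i=a'_i$ for all $i<d$, completing the proof. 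Apart from the collapsing step flagged above, everything else is bookkeeping with the $\mathrm{TC}$ rules and the two monotonicity principles.
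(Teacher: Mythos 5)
Your proof is correct and follows essentially the same route as the paper's: both hinge on Fact~\ref{fa:pure-dim} together with the rules $\mathrm{TC}_2$ and $\mathrm{TC}_4$, and both obtain uniqueness by decreasing induction. The only differences are cosmetic — you treat the two identities in the opposite order, and you make explicit the collapsing step $a_i-a_d=a_i-(a_i\meet a_d)$ that the paper uses silently when it invokes Fact~\ref{fa:pure-dim} under the hypothesis $\dim(a_i\meet a_d)<i$.
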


\begin{proof}
  Assume that $b\in L$ is $d$\--pure and $b\leq a$, so that $b-a=\ZERO$.
  For every $i<d$, $\dim a_i< d$ hence $b-a_i=b$ by
  Fact~\ref{fa:pure-dim}. So $b-a=b-a_d$ by
  \tcref{TC:x-(y-union-z)}, hence $b-a_d=\ZERO$ that is $b\leq a_d$. This
  determines $a_d$ as the largest $d$\--pure element in $L$
  smaller than $a$. Moreover $a-a_d=\jjoin_{i<d}(a_i-a_d)$ by
  \tcref{TC:(x-union-y)-z}, and each $a_i-a_d=a_i$ by
  Fact~\ref{fa:pure-dim} (because $a_i$ is $i$\--pure and $\dim a_i\meet
  a_d<i$ by assumption). The uniqueness of the pure decomposition
  follows by decreasing induction. 
\end{proof}

\begin{proposition}\label{pr:axiom-scaled}
  Let $L$ be a $\lsc$\--expansion of a co-Heyting algebra. $L$ is a $d$\--scaled
  lattice if and only if, for every $a\in L$: 
  \begin{list}{$\rm\bf SC_{\theenumi}$:}{\usecounter{enumi}}
  \refstepcounter{enumi}
    \item[$\rm\bf SC_1^d$:]
      \label{SC:pure-dec}
      $a=\jjoin_{0\leq i\leq d}\pc^i(a)$\quad and\quad $\forall i>d$, $\pc^i(a)=\ZERO$. 
    \item
      \label{SC:Ci-i-pur}
      $\forall i,\quad \pc^i(a)$ is $i$\--pure.
    \item 
      \label{SC:dim-inter}
      $\forall i\neq j,\quad \dim\pc^i(a)\meet\pc^j(a)<\min(i,j)$. 
  \end{list}
\end{proposition}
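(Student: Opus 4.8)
The plan is to read the hypotheses $\rm SC_1^d$, $\rm SC_2$, $\rm SC_3$ as the single statement that, for every $a$, the expansion $a=\jjoin_{0\le i\le d}\pc^i(a)$ is a \emph{pure decomposition} in the sense of the paragraph preceding Proposition~\ref{pr:unique-pure-dec} (take $a_i=\pc^i(a)$); the whole argument then consists in feeding this into the uniqueness clause of that proposition together with Fact~\ref{fa:pure-dim}. The forward implication is immediate: if $L$ is $d$\--scaled then \scref{SC:scdim-dim} gives $\scdim=\dim$ on all of $L$, so $\rm SS_1^d$ is literally $\rm SC_1^d$; the fact that $\pc^i(a)$ is $i$\--sc\--pure (namely $\pc^i(\pc^i(a))=\pc^i(a)$ by $\rm SS_2^d$, whence sc\--purity by \ssref{SS:Ck(a)-k-pure}) becomes $i$\--purity, i.e. $\rm SC_2$; and \ssref{SS:Ci-inter-Cj} read through $\scdim=\dim$ is $\rm SC_3$.

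For the converse I would establish \scref{SC:scdim-dim} first, since every subscaled axiom restricted to ``$k\ge\scdim$'' has to be retranslated in terms of $\dim$. As $\pc^i(a)$ is $i$\--pure by $\rm SC_2$, it is $\ZERO$ or of dimension exactly $i$, whence $\dim a=\max\{i:\pc^i(a)\ne\ZERO\}$; the same integer equals $\scdim a$, because the top nonzero component $\pc^m(a)$ cannot be absorbed by the strictly lower ones ($\rm SC_3$ keeps $\pc^m(a)\meet\jjoin_{j<m}\pc^j(a)$ of dimension $<m$, while $\pc^m(a)$ is $m$\--pure of dimension $m$). This yields \scref{SC:scdim-dim}, and in passing that $i$\--pure and $i$\--sc\--pure coincide. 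Then \ssref{SS:Ci-inter-Cj} is just $\rm SC_3$, and \ssref{SS:ll-scdim} follows from $b\ll a\Rightarrow\dim b<\dim a$ (Fact~\ref{fa:ldim-et-TCdim}). For $\rm SS_2^d$, fixing $I\subseteq\{0,\dots,d\}$ and $b=\jjoin_{i\in I}\pc^i(a)$, both $b=\jjoin_{0\le i\le d}b_i$ (with $b_i=\pc^i(a)$ for $i\in I$ and $b_i=\ZERO$ otherwise) and $b=\jjoin_{0\le i\le d}\pc^i(b)$ are pure decompositions with the same bound $d$, so the uniqueness clause of Proposition~\ref{pr:unique-pure-dec} forces $\pc^k(b)=b_k$, which is exactly $\rm SS_2^d$.

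The two steps needing genuine computation, where I expect the main difficulty, are \ssref{SS:Ck(a-union-b)} and \ssref{SS:Ck(a)-moins-b=Ck(a)}; both hinge on two remarks: if $P$ is $k$\--pure then so is $P-x$ for every $x$ (since $(P-x)-c=P-(x\join c)$ by \tcref{TC:x-(y-union-z)} and $P$ is $k$\--pure), and subtracting an element of dimension $<k$ from a $k$\--pure element changes nothing (Fact~\ref{fa:pure-dim}). For \ssref{SS:Ck(a)-moins-b=Ck(a)}, with $k\ge\scdim b=\dim b$, I would write $b=\pc^k(b)\join b'$ with $\dim b'<k$ (by \tcref{TC:a=(a-inter-b)-union-(a-b)} and Proposition~\ref{pr:unique-pure-dec}) and compute $\pc^k(a)-b=(\pc^k(a)-\pc^k(b))-b'=\pc^k(a)-\pc^k(b)$, the last equality because $\pc^k(a)-\pc^k(b)$ is $k$\--pure and $\dim b'<k$. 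For \ssref{SS:Ck(a-union-b)}, with $k\ge\max(\dim a,\dim b)=\dim(a\join b)$, I would write $a=\pc^k(a)\join a'$ and $b=\pc^k(b)\join b'$ with $\dim a',\dim b'<k$, put $P=\pc^k(a)\join\pc^k(b)$ (a join of $k$\--pure elements, hence $k$\--pure) and $e=a'\join b'$ with $\dim e<k$, so $a\join b=P\join e$. Then $P\le\pc^k(a\join b)$ since $P$ is $k$\--pure and $P\le a\join b$ (Proposition~\ref{pr:unique-pure-dec}), while conversely $\pc^k(a\join b)-P\le(a\join b)-P=e-P$ has dimension $<k$ and $\pc^k(a\join b)$ is $k$\--pure, forcing $\pc^k(a\join b)-P=\ZERO$, i.e. $\pc^k(a\join b)\le P$. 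The delicate point throughout is exactly this bookkeeping of which elements remain $k$\--pure after a subtraction, so that Fact~\ref{fa:pure-dim} may be applied; once \scref{SC:scdim-dim} is secured, that is the heart of the matter.
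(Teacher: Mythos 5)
Your proposal is correct and follows essentially the same route as the paper: forward direction by reading the $\rm SS$ axioms through $\scdim=\dim$, and converse by first securing $\rm SC_0$, deducing $\rm SS_2^d$ from the uniqueness of pure decompositions (Proposition~\ref{pr:unique-pure-dec}), and handling $\rm SS_3$ and $\rm SS_5$ via Fact~\ref{fa:pure-dim} and the $\rm TC$ rules. The only cosmetic difference is in $\rm SS_5$, where you subtract $\pc^k(b)$ first and invoke stability of $k$\--purity under ``$-$'', while the paper peels off the lower-dimensional components of $b$ first; both computations are valid.
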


\newcommand{\scun}{$\rm SC_1^d$}

\begin{proof}
  Clearly \ssun{} is \scun. Moreover \scref{SC:scdim-dim} implies that
  \ssref{SS:Ci-inter-Cj}$\Leftrightarrow$\scref{SC:dim-inter} and
  \ssref{SS:Ck(a)-k-pure}$\Leftrightarrow$\scref{SC:Ci-i-pur}. So every
  $d$\--scaled lattice satisfies conditions \scun{} to
  \scref{SC:dim-inter}. Reciprocally, assume that $L$ satisfies these
  conditions. Then it satisfies \scref{SC:scdim-dim} (by
  \scref{SC:Ci-i-pur} and \scun{}) hence also
  \ssun{}, \ssref{SS:Ci-inter-Cj} and \ssref{SS:ll-scdim}.
  The uniqueness of the pure decomposition of $a$ implies that $L$
  satisfies also \ssdeux. 
  
  For every $b\in L$ and every $k\geq \dim_L b$, we have $b=\jjoin_{i\leq
  k}\pc^i(b)$ by \scun{}, and $\pc^k(a)-\pc^i(b)=\pc^k(a)$
  by Fact~\ref{fa:pure-dim} and \scref{SC:Ci-i-pur}). So
  $\pc^k(a)-b=\pc^k(a)-\pc^k(b)$ by \tcref{TC:(x-union-y)-z}, which
  proves \scref{SS:Ck(a)-moins-b=Ck(a)}. 
  
  It remains to check \ssref{SS:Ck(a-union-b)}, for every $a,b\in L$ of
  dimension $\leq k$. Clearly
  $\pc^k(a)\join\pc^k(b)$ is smaller than $a\join b$ and $k$\--pure, hence
  smaller than $\pc^k(a\join b)$ by
  Proposition~\ref{pr:unique-pure-dec}. On the other hand by
  \scun{} and \tcref{TC:(x-union-y)-z}
  \begin{equation}
    (a\join b)-(\pc^k(a)\join\pc^k(b))
    =\jjoin_{i\leq k}(\pc^i(a)\join\pc^i(b))-(\pc^k(a)\join\pc^k(b))
    \leq\jjoin_{i< k}\pc^i(a)\join\pc^i(b).
    \label{eq:finale}
  \end{equation}
  Actually we have equality, by \scun{} and
  Fact~\ref{fa:pure-dim}. Anyway $(a\join b)-(\pc^k(a)\join\pc^k(b))$ has
  dimension $<k$ by (\ref{eq:finale}). On the other hand, by \scun{}
  and \tcref{TC:(x-union-y)-z}, $(a\join b)-(\pc^k(a)\join\pc^k(b))$ is the
  join of $\pc^i(a\join b)-(\pc^k(a)\join\pc^k(b))$ for $i\leq k$. Since 
  $\dim (a\join b)-(\pc^k(a)\join\pc^k(b))<k$ this implies that 
  $\pc^k(a\join b)-(\pc^k(a)\join\pc^k(b))=\ZERO$ hence 
  $\pc^k(a\join b)\leq \pc^k(a)\join\pc^k(b)$. The conclusion follows.
\end{proof}

From now on, let $\cK=(K,\dots)$ be a first-order structure defining a
topology on $K$. Endow $K^m$ with the product topology, and define the
{\df dimension} of a non-empty definable set $X\subseteq K^m$ as
the largest integer $r\geq0$ such that for some coordinate
projection\footnote{A {\df coordinate projection} $\pi:K^m\to K^r$ is a
function defined by $\pi(x_1,\dots,x_m)=(x_{i_1},\dots,x_{i_r})$ for some fixed
$i_1<\cdots<i_r$ in $\{1,\dots,m\}$.} $\pi:K^m\to K^r$, $\pi(X)$ has non-empty
interior. By convention $\dim\emptyset=-\infty$. Recall that for every $x\in X$ the local
dimension $\dim(X,x)$ is the minimum of $\dim U\cap X$ as
$U$ ranges over the definable neighbourhood of $x$. Let $W_k(X)$
denote the set of $x\in X$ such that there is a definable neighbourhood
$B$ of $x$ and a coordinate projection $\pi:K^m\to K^k$ which induces by
restriction a homeomorphism between $B\cap X$ and an open subset of
$K^k$. We say that $\cK$ is a {\df tame topological structure} if it
satisfies the following properties, for every definable sets $X,Y\subseteq K^m$
and every definable function $f:X\to K^n$.

\begin{list}{\bf Dim\theenumi:}{\usecounter{enumi}}
  \item\label{it:dim-image}
    $\dim (f(X))\leq\dim (X)$.
  \item\label{it:dim-union}
    $\dim X\cup Y=\max(\dim X,\dim Y)$.
  \item\label{it:dim-frontier}
    $\dim (X) =\dim(\overline{X})$ and if $X\neq\emptyset$, then $\dim
    (\overline{X}\setminus X)<\dim (X)$.
  \item\label{it:dim-Sk}
    If $\dim (X)=d\geq 0$ then $\dim (X\setminus W_d(X))<d$.
\end{list}

\begin{example}\label{ex:tame-top}
  Ever $o$\--minimal, $C$\--minimal or $P$\--minimal expansion of a
  field $K$ is tame (see \cite{drie-1998}, \cite{hask-macp-1997},
  \cite{darn-cubi-leen-2017}). More generally, every dp\--minimal
  expansion of a field $K$ which is not strongly minimal is tame (see
  \cite{simo-wals-2018}). Following \cite{doli-good-2017-tmp} we may
  also consider the models of visceral theories having finite
  definable choice and no space-filling function: all of them are
  tame. This applies in particular, with the interval topology, to
  every divisible ordered Abelian group whose theory is weakly
  $o$\--minimal.
\end{example}

Note that by (Dim\ref{it:dim-image}), $\dim f(X)=\dim X$ if $f$ is
bijective. For every integer $k\geq 0$ we let 
\begin{displaymath}
  \Delta_k(X)=\big\{x\in X\tq \dim(X,x)=k\big\}.
\end{displaymath}
In particular, $X$ has pure dimensional if and only if $X=\Delta_d(X)$ with
$d=\dim X$. The sets $\Delta_k(X)$ form a partition of
$X$. For every $k\geq0$, $\bigcup_{l\geq k}\Delta_l(X)$ is closed in $X$ (for every
$k$), while $W_k(X)$ is open in $X$.

\begin{proposition}\label{pr:Wk-dense}
  With the above notation and assumptions, $W_k(X)$ is a dense subset
  of $\Delta_k(X)$. If non-empty, they have dimension $k$. In particular,
  $X$ has pure dimension $k$ if and only if $W_k(X)$ is non-empty and
  dense in $X$. 
\end{proposition}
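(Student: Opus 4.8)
The plan is to prove the three assertions in turn, all resting on a single localisation principle — that $W_k$ and $\Delta_k$ behave well under restriction to open subsets — combined with the dimension axioms, chiefly \textbf{Dim4}. First I would record the inclusion $W_k(X)\subseteq\Delta_k(X)$: if $x\in W_k(X)$ with witnessing open neighbourhood $B$, then $B\cap X$ is homeomorphic by a coordinate projection to a non-empty open subset of $K^k$, so every sufficiently small neighbourhood of $x$ meets $X$ in a set of dimension exactly $k$, whence $\dim(X,x)=k$. The core local lemma is: for every $x\in\Delta_k(X)$ and every neighbourhood $U$ of $x$ one has $U\cap W_k(X)\neq\emptyset$. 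To see this I would shrink $U$, intersecting it with a neighbourhood realising the local dimension, so that $Y:=U\cap X$ is open in $X$ with $\dim Y=k$; then \textbf{Dim4} forces $W_k(Y)\neq\emptyset$, and since $Y$ is open in $X$ one checks directly from the definition that $W_k(Y)\subseteq W_k(X)$. This one lemma yields at once that $\Delta_k(X)\neq\emptyset\Leftrightarrow W_k(X)\neq\emptyset$ and that $W_k(X)$ is dense in $\Delta_k(X)$.

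Next, for the dimension claim, assume $\Delta_k(X)$ — equivalently $W_k(X)$ — is non-empty. For the lower bound I would pick $x\in W_k(X)$ with open witness $B$: then $B\cap X\subseteq W_k(X)$ and $B\cap X$ is homeomorphic to an open subset of $K^k$, so $\dim W_k(X)\geq\dim(B\cap X)=k$. For the upper bound I would prove $\dim\Delta_k(X)\leq k$ by induction on $d=\dim X$. The case $k=d$ is trivial since $\Delta_d(X)\subseteq X$. For $k<d$ set $X'=X\setminus W_d(X)$, which is closed in $X$ with $\dim X'<d$ by \textbf{Dim4}. The key point is that $\Delta_k(X)\subseteq\Delta_k(X')$: since $\overline{W_d(X)}\cap X$ lies in the closed set $\bigcup_{l\geq k+1}\Delta_l(X)$, any $x\in\Delta_k(X)$ admits a neighbourhood $U_0$ missing $W_d(X)$, so $U\cap X=U\cap X'$ for $U\subseteq U_0$ and the local dimensions in $X$ and $X'$ coincide at $x$. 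The induction hypothesis applied to $X'$ then gives $\dim\Delta_k(X)\leq\dim\Delta_k(X')\leq k$. Combined with $W_k(X)\subseteq\Delta_k(X)$ and the lower bound this forces $\dim W_k(X)=\dim\Delta_k(X)=k$.

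Finally, for the pure-dimension characterisation I would exploit the \emph{openness} of each $W_l(X)$ in $X$ together with their pairwise disjointness, inherited from the partition $(\Delta_l(X))_l$. If $X$ has pure dimension $k$ then $X=\Delta_k(X)$ is non-empty, so by the first part $W_k(X)\neq\emptyset$ and it is dense in $\Delta_k(X)=X$. Conversely, suppose $W_k(X)$ is non-empty and dense in $X$, but $\Delta_l(X)\neq\emptyset$ for some $l\neq k$; then $W_l(X)\neq\emptyset$ by the first part, and $W_l(X)$ is a non-empty open subset of $X$ disjoint from $W_k(X)$, contradicting density. Hence $\Delta_l(X)=\emptyset$ for $l\neq k$, i.e.\ $X=\Delta_k(X)$, and $\dim X=\dim\Delta_k(X)=k$.

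The main obstacle is the upper bound $\dim\Delta_k(X)\leq k$: unlike the lower bound and the density statement it is not local and cannot be extracted from a single neighbourhood. The induction on $\dim X$ through the closed subset $X'=X\setminus W_d(X)$ is what makes it go, and its delicate point is checking that passing from $X$ to $X'$ leaves the local dimension unchanged at points of $\Delta_k(X)$ with $k<d$ — precisely where the closedness of $\bigcup_{l\geq k+1}\Delta_l(X)$ is needed.
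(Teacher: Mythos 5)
Your proof is correct, and three of its four ingredients (the inclusion $W_k(X)\subseteq\Delta_k(X)$, the density of $W_k(X)$ in $\Delta_k(X)$ via shrinking to a neighbourhood of dimension $k$ and applying \textbf{Dim4}, and the pure-dimension characterisation via openness and pairwise disjointness of the $W_l(X)$'s) coincide with the paper's argument. Where you genuinely diverge is the upper bound $\dim\Delta_k(X)\leq k$. The paper gets it in two lines from what is already on the table: since $W_k(X)$ is dense in $\Delta_k(X)$, axiom \textbf{Dim3} gives $\dim\Delta_k(X)=\dim W_k(X)$, and if $\dim W_k(X)=l>k$ then \textbf{Dim4} applied to the set $W_k(X)$ itself yields a non-empty $W_l(W_k(X))$, which (because $W_k(X)$ is open in $X$) sits inside $W_l(X)\cap W_k(X)=\emptyset$ --- a contradiction. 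You instead run an induction on $\dim X$, passing to the closed subset $X'=X\setminus W_d(X)$ and checking $\Delta_k(X)\subseteq\Delta_k(X')$ via the closedness of $\bigcup_{l\geq k+1}\Delta_l(X)$. Both arguments are sound and rest on the same two structural facts (openness of the $W_l$'s in $X$ and their disjointness); the paper's is shorter because it recycles the density statement through \textbf{Dim3}, whereas yours is more self-contained and produces along the way the reusable local fact that removing $W_d(X)$ does not change local dimension at points of $\Delta_k(X)$ for $k<d$. You correctly identified that the upper bound is the only non-local step; just note that your base case should read ``$k\geq d$'' rather than ``$k=d$'' (harmless, since $\Delta_k(X)=\emptyset$ for $k>\dim X$), and that applying the induction hypothesis to $X'$ tacitly uses the definability of $W_d(X)$ --- an assumption the paper makes implicitly as well, e.g.\ when invoking \textbf{Dim4}.
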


\begin{proof}
  If $x\in W_k(X)$, there is a definable neighborhood $U$ of $x$ in $X$,
  a coordinate projection $\pi:K^m\to K^k$ and an open subset $V$ of $K^k$
  such that $\pi$ induces by restriction a homeomorphism between $U$ and
  $V$. In particular $\dim U=k$ by (dim\ref{it:dim-image}), hence
  $\dim W_k(X)\geq k$. For every sufficiently small neighbourhood $U'$ of
  $x$ in $X$ we have $U'\subseteq U$, hence $\pi$ induces by restriction a
  homeomorphism between $U'$ and an open subset of $K^k$, so $\dim
  U'=k$. This proves that $\dim(X,x)=k$ hence $W_k(X)\subseteq\Delta_k(X)$. 

  We turn now to density. Pick  $x\in\Delta_k(X)$ and a neighbourhood $U$
  of $x$ in $X$. By shrinking $U$ if necessary we may assume that
  $\dim U=k$. From (Dim\ref{it:dim-Sk}) we know that $W_k(U)\neq\emptyset$. On the
  other hand, $W_k(U)\subseteq W_k(X)$ because $U$ is open in $X$.
  Consequently $W_k(U)\subseteq B\cap W_k(X)$ and so $B\cap W_k(X)\neq\emptyset$. This proves
  density.

  By (Dim\ref{it:dim-frontier}) we have $\dim\Delta_k(A)=\dim W_k(A)$, so
  it only remains to check that $W_k(X)$
  has dimension $k$, provided it is not empty. Clearly $\dim W_k(X)\geq
  k$. If $\dim W_k(X)=l>k$ then by (Dim\ref{it:dim-Sk}) $W_l(W_k(X))$
  is non-empty. But $W_k(X)$ is open in $X$, hence $W_l(W_k(X))$ is
  contained in $W_l(X)$. So $W_l(W_k(X))$ is contained both in
  $W_l(X)$ and in $W_k(X)$, a contradiction since $W_l(X)$ and
  $W_k(X)$ are disjoint (they are contained in $\Delta_k(X)$ and $\Delta_l(X)$
  respectively). 

  The last point follows, since $X$ has pure dimension $k$ if and only
  if $X=\Delta_k(X)\neq\emptyset$.
\end{proof}

Recall that $\ldef(X)$ denotes the co-Heyting algebra of all the
definable sets $A\subseteq X$ closed in $X$, expanded by the functions
$(\pc^i)_{i\in\NN}$ defined by $\pc^i(A)=\overline{\Delta_i(A)}\cap A$. 

\begin{proposition}\label{pr:geom-scaled}
  Let $\cK=(K,\dots)$ be a tame topological structure, and
  $X\subseteq K^m$ be a definable set.
  \begin{enumerate}
    \item\label{it:geom-dim}
      For every $A\in L$, $\dim_{\ldef(X)} A=\dim A$. 
    \item\label{it:geom-scaled}
      $\ldef(X)$ is a $d$\--scaled lattice, with $d=\dim X$.
  \end{enumerate}
\end{proposition}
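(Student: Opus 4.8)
The plan is to prove part~(\ref{it:geom-dim}) first, since the characterisation of $d$\--scaled lattices in Proposition~\ref{pr:axiom-scaled} is stated in terms of the lattice dimension $\dim_{\ldef(X)}$ and of purity defined from it; once we know $\dim_{\ldef(X)}A=\dim A$ for every $A$, the three conditions $\rm SC_1^d$, \scref{SC:Ci-i-pur} and \scref{SC:dim-inter} can be verified using the geometric dimension alone. The heart of~(\ref{it:geom-dim}) is the lemma: \emph{if $b\ll a$ in $\ldef(X)$ then $\dim b<\dim a$}. Recall $b\ll a$ means $b\le a-b$; as $a-b=\overline{a\setminus b}\cap X\le a$, this gives $b\le a$ and that $a\setminus b$ is dense in $a$, i.e.\ $b$ is closed with empty interior in $a$.

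To prove the lemma, write $d=\dim a$ and split $b=(b\cap W_d(a))\cup(b\setminus W_d(a))$. On the open part $W_d(a)$ every point has a definable neighbourhood mapped homeomorphically by a coordinate projection onto an open subset of $K^d$; in each such chart $b\cap W_d(a)$ becomes a closed subset of $K^d$ with empty interior, hence (by the very definition of dimension, the identity projection having no interior) of dimension $<d$, so $\dim(b\cap W_d(a))<d$. On the other hand $\dim(b\setminus W_d(a))\le\dim(a\setminus W_d(a))<d$ by (Dim\ref{it:dim-Sk}), and (Dim\ref{it:dim-union}) yields $\dim b<d$. Granting this, the bound $\dim_{\ldef(X)}A\le\dim A$ follows from Fact~\ref{fa:ldim-et-TCdim}: any chain $\ZERO\neq A_0\ll\cdots\ll A_n\le A$ gives $0\le\dim A_0<\cdots<\dim A_n\le\dim A$, whence $n\le\dim A$. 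For the reverse inequality I would build a chain of length $d=\dim A$: pick $x\in W_d(A)$ with a chart $\varphi\colon U\to V$ onto an open $V\subseteq K^d$ (with $U$ a neighbourhood of $x$ in $A$), take nested coordinate affine subspaces $E_0\subset\cdots\subset E_d$ through $\varphi(x)$, set $F_i=V\cap E_i$ (closed in $V$, of dimension $i$, with $F_i$ nowhere dense in $F_{i+1}$), and put $A_i=\overline{\varphi^{-1}(F_i)}\cap X\in\ldef(X)$. Then $A_i\subseteq A$ and $\dim A_i=i$ by (Dim\ref{it:dim-frontier}); using that $U$ is open in $A$ and $F_i$ closed in $V$ one checks $A_i\cap U=\varphi^{-1}(F_i)$, so $A_{i+1}\setminus A_i$ is dense in $A_{i+1}\cap U$, itself dense in $A_{i+1}$, giving $A_i\ll A_{i+1}$. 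Since $A_0=\{x\}\neq\ZERO$, Fact~\ref{fa:ldim-et-TCdim} gives $\dim_{\ldef(X)}A\ge d$, proving~(\ref{it:geom-dim}).

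For~(\ref{it:geom-scaled}) I would verify the three conditions of Proposition~\ref{pr:axiom-scaled}, reading every dimension geometrically thanks to~(\ref{it:geom-dim}). Condition $\rm SC_1^d$ is immediate from the partition $A=\bigcup_i\Delta_i(A)$ together with $\Delta_i(A)\subseteq\pc^i(A)\subseteq A$ and $\dim A\le\dim X=d$. For \scref{SC:Ci-i-pur}, Proposition~\ref{pr:Wk-dense} shows $W_i(A)$ is dense in $\pc^i(A)=\overline{\Delta_i(A)}\cap A$; each point of $W_i(A)$ has a chart neighbourhood contained in $\Delta_i(A)\subseteq\pc^i(A)$, hence lies in $W_i(\pc^i(A))$, so $W_i(\pc^i(A))$ is dense and $\pc^i(A)$ has pure dimension $i$, i.e.\ is $i$\--sc-pure. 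For \scref{SC:dim-inter} with $i<j$, a chart neighbourhood of a point of $W_i(A)$ misses $\Delta_j(A)$, so $W_i(A)\cap\pc^j(A)=\emptyset$; thus $\pc^i(A)\cap\pc^j(A)$ lies in the closed, nowhere dense subset $\pc^i(A)\setminus W_i(A)$ of the $i$\--pure set $\pc^i(A)$, and the lemma (applied with $a=\pc^i(A)$) gives $\dim\big(\pc^i(A)\cap\pc^j(A)\big)<i=\min(i,j)$.

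The main obstacle is the lemma $b\ll a\Rightarrow\dim b<\dim a$: it is exactly where tameness enters, through the manifold structure of $W_d(a)$ and the axioms (Dim\ref{it:dim-Sk}) and (Dim\ref{it:dim-union}). Once it is in place, both the dimension computation and the purity and intersection conditions follow with only routine topological bookkeeping, the most delicate point being the commutation of closure with restriction to the open subspace $U$ used in the lower bound.
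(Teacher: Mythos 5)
Your overall strategy is the paper's: prove the dimension identity~(\ref{it:geom-dim}) first, then verify the three conditions of Proposition~\ref{pr:axiom-scaled} purely geometrically. Part~(\ref{it:geom-scaled}) of your argument is essentially identical to the paper's (same use of $W_i(A)$, $\Delta_i(A)$ and Proposition~\ref{pr:Wk-dense}; your derivation of \scref{SC:dim-inter} from $\pc^i(A)\cap\pc^j(A)\subseteq\overline{W_i(A)}\setminus W_i(A)$ is the paper's, phrased via your lemma instead of citing (Dim\ref{it:dim-frontier}) directly). The real divergence is in how you prove the key lemma ``$b\ll a\Rightarrow\dim b<\dim a$''. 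The paper gets it in one line from the frontier axiom: $b\ll a$ means $b\subseteq\overline{a\setminus b}$, and since $b\cap(a\setminus b)=\emptyset$ this gives $b\subseteq\overline{a\setminus b}\setminus(a\setminus b)$, whence $\dim b<\dim(a\setminus b)\leq\dim a$ by (Dim\ref{it:dim-frontier}) --- this is exactly what that axiom is for. Your chart-by-chart argument on $W_d(a)$ is not only longer, it has a genuine (though repairable) gap: from ``$b\cap W_d(a)$ has dimension $<d$ in each chart'' you conclude $\dim\bigl(b\cap W_d(a)\bigr)<d$, but $W_d(a)$ is covered by infinitely many charts and (Dim\ref{it:dim-union}) only controls finite unions; dimension is not a local invariant by fiat here. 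To repair it, suppose $S=b\cap W_d(a)$ had dimension $d$: then $W_d(S)\neq\emptyset$ by (Dim\ref{it:dim-Sk}), and a point of $W_d(S)$ would have arbitrarily small neighbourhoods in $S$ of dimension $d$, contradicting the chart bound. Given that the one-line route is available, the detour through charts buys nothing for the upper bound.

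For the lower bound $\dim_{\ldef(X)}A\geq\dim A$ your construction is genuinely different from the paper's and, in fact, more careful. The paper argues by induction, taking $B=\pi^{-1}(Y\cap U)\cap A$ for a hyperplane $Y$ and asserting $B\subseteq\overline{A\setminus B}$; as stated this density claim does not lift through $\pi$ without restricting to a locus where $\pi$ is a local homeomorphism. Your chain $A_i=\overline{\varphi^{-1}(F_i)}\cap X$ built from nested coordinate affine subspaces inside a single chart of $W_d(A)$ avoids exactly this issue: since $\varphi$ is a homeomorphism onto an open $V\subseteq K^d$ and $F_i$ is nowhere dense in $F_{i+1}$, the inclusion $A_i\subseteq\overline{A_{i+1}\setminus A_i}$ holds honestly, and Fact~\ref{fa:ldim-et-TCdim} applies. (Both constructions tacitly use that the topology on $K$ is not discrete, so that proper coordinate affine subspaces are nowhere dense, and yours also uses that singletons are closed so that $A_0=\{x\}\neq\ZERO$; these are harmless standing assumptions on tame topological structures.)
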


\begin{remark}\label{re:dp-pure-dim}
  The first item ensures that $A\in L$ is $k$\--pure in $\ldef(A)$ if
  and only if it is so in the geometric sense, that is $A=\Delta_k(A)$ or
  equivalently (by Proposition~\ref{pr:Wk-dense}) $W_k(A)$ is dense in
  $A$. 
\end{remark}

\begin{proof}
  In order to ease the notation let $L=\ldef(X)$. 

  (\ref{it:geom-dim}) 
  We can assume that $A\neq\emptyset$. By Fact~\ref{fa:ldim-et-TCdim},
  $\dim_L A$ is then the foundation of rank of $A$ in
  $L\setminus\{\ZERO\}$ for the strong order $\ll$. It suffices to prove,
  by induction on $k$, that $\dim_L A\geq k$ if and only if $\dim
  A\geq k$. This is clear for $k=0$ so let us assume that $k\geq1$ and the
  result is proved for $k-1$. 
  
  If $\dim A\geq k$ there is a coordinate projection $\pi:K^m\to K^k$ and a
  non-empty definable open set $U\subseteq K^k$ contained in $\pi(K)$. Let $Y$
  be any hyperplane of $K^k$ intersecting $U$, and $B=\pi^{-1}(Y\cap U)\cap
  A$. Clearly $Y\subseteq\overline{U\setminus Y}$ hence $B\subseteq\overline{A\setminus B}$, that is
  $B\ll A$. Since $\dim Y=k-1$ we have $\dim B\geq k-1$ by
  (\ref{it:dim-image}), hence $\dim_L B\geq k-1$ by induction hypothesis,
  and finally $\dim_L A\geq k$ since $B\ll A$. 

  Reciprocally, if $\dim_L A\geq k$ by Fact~\ref{fa:ldim-et-TCdim} there
  is $B\in L$ such that $B\ll A$ and $\dim_L B\geq k-1$. By induction
  hypothesis $\dim B\geq k-1$. We have $B\subseteq\overline{A\setminus B}\cap B\subseteq\overline{A\setminus
  B}\setminus(A\setminus B)$, so $\dim B<\dim A\setminus B$ by (Dim\ref{it:dim-frontier}).
  {\it A fortiori} $\dim B< \dim A$ hence $\dim A\geq k$. 
 
  (\ref{it:geom-scaled})
  For every $i\leq m$ and every $A\in\ldef(X)$,
  $\pc^i(A)=\overline{\Delta_i(A)}\cap A=\overline{W_i(A)}\cap A$ by
  Proposition~\ref{pr:Wk-dense}. The scheme \scun{} then follows from
  (Dim\ref{it:dim-Sk}) by a straightforward induction. Moreover each
  $\pc^i(A)$ is $i$\--pure in $L$ by Remark~\ref{re:dp-pure-dim},
  hence \scref{SC:Ci-i-pur} holds true. Finally, for every $i<j$,
  since $W_i(A)$ is open in $A$ and disjoint from $W_j(A)$, it is also
  disjoint from $\overline{W_j(A)}\cap A$ hence
  \begin{displaymath}
    \pc^i(A)\cap\pc^j(A)=\overline{W_i(A)}\cap\overline{W_j(A)}\cap A\subseteq
    \overline{W}_i\setminus W_i.
  \end{displaymath}
  So $\dim\pc^i(A)\cap\pc^j(A)<i$ by (Dim\ref{it:dim-frontier}), which
  proves \scref{SC:dim-inter}. So $\ldef(X)$ is a scaled lattice by
  Proposition~\ref{pr:axiom-scaled}. 
\end{proof}

We turn now to the Catenarity Property. We do not expect it to be
completely general. This property is well known over for $o$\--minimal
fields (it follows immediately from the triangulation theorem). We are
going to prove it for every dp-minimal expansion $\cK=(K,v,\dots)$ of a
non-trivially valued field having definable Skolem functions. This
assumption on Skolem function is somewhat restrictive but it includes
the case of any $p$\--adic field with its semi-algebraic structure (or
even its subanalytic structure), which is sufficient for our needs. We
will use Proposition~3.7 in \cite{simo-wals-2018}, which says that:
    \begin{list}{\bf Dim5:}{}
    \item
      Every definable function $f:X\subseteq K^k\to K^l$ is continuous on a
      definable set $X'$ dense in $X$.
  \end{list}

\begin{proposition}\label{pr:dp-cat}
  Let $\cK=(K,v,\dots)$ be a dp-minimal expansion of a non-trivially
  valued field $(K,v)$ having definable Skolem functions. For every
  non-negative integers $0\leq r<q<p\leq m$ and every definable sets $C\subseteq A\subseteq
  K^m$, if $A$ is $p$\--pure and $\dim C\leq r$, there exists a
  $q$\--pure definable set $B\subseteq A$ such that $C\subseteq\overline{B}$. 
\end{proposition}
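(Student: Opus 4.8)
The plan is to prove the statement by induction on $r=\dim C$, the inductive step being a reduction to the construction of a $q$-pure set capturing the \emph{generic} part of $C$. Concretely, suppose we can produce a $q$-pure definable $B_1\subseteq A$ with $\dim\bigl(C\setminus\overline{B_1}\bigr)<r$. Then $C_1=\overline{C\setminus\overline{B_1}}\cap A$ is a definable subset of $A$ with $\dim C_1<r<q<p$ (by Dim\ref{it:dim-frontier}), so the induction hypothesis yields a $q$-pure $B_2\subseteq A$ with $C_1\subseteq\overline{B_2}$. Setting $B=B_1\cup B_2$, every point of $B$ has local dimension $q$, since it lies in one of the two $q$-pure pieces whose dimensions equal $q=\dim B$; hence $B$ is again $q$-pure, and $C\subseteq\overline{B_1}\cup C_1\subseteq\overline{B}$. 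Thus it suffices to capture the generic part of $C$, with the empty case $r=-\infty$ as the trivial base.

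By Dim\ref{it:dim-Sk} the locus $C\setminus W_r(C)$ has dimension $<r$, so it is enough to find a $q$-pure $B_1\subseteq A$ with $W_r(C)\subseteq\overline{B_1}$. On $W_r(C)$ the set $C$ is, locally and via coordinate projections, homeomorphic to open subsets of $K^r$; moreover $C\subseteq A$ and $A$ is $p$\--pure, so $\dim(A,c)=p$ for every $c\in C$ while $\dim(C,c)=r<p$. Hence $A\setminus\overline{C}$ still has local dimension $p$ at each such $c$, which provides room inside $A$, transverse to $C$, to grow $q-r$ extra dimensions out of $C$.

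The heart of the argument is therefore a uniform construction. Using the definable Skolem functions of $\cK$, I would select a definable family $(D_c)_{c\in W_r(C)}$ of subsets of $A$, each of dimension $q-r$ and each accumulating at $c$ (that is $c\in\overline{D_c}$) while meeting $\overline{C}$ only in its frontier. The point is to build each $D_c$ \emph{without} inflating the dimension by a spurious ``scale'' parameter: rather than taking a union of shrinking balls approaching $c$, I would grow $D_c$ by $q-r$ successive one-dimensional steps, each time selecting, by a curve-selection argument inside the $p$\--pure set $A$, a definable curve accumulating at the current base point and transverse to what has already been built; the generic continuity of definable functions (Dim5, i.e.\ Proposition~3.7 of \cite{simo-wals-2018}) keeps these choices continuous on a dense definable set, hence keeps the family definable. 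Taking $B_0=\bigcup_{c}D_c$, a fibre-dimension count (via Dim\ref{it:dim-image} and Dim\ref{it:dim-union}) gives $\dim B_0\le r+(q-r)=q$, and one passes to the $q$-pure component $B_1=\pc^{q}\bigl(\overline{B_0}\cap X\bigr)$, which lies in $A$ because $A$ is closed in $X$. Transversality of the $D_c$ to $C$ forces $\dim B_0=q$ and ensures that all but a subset of $W_r(C)$ of dimension $<r$ lands in $\overline{B_1}$, which is exactly what the reduction requires.

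The main obstacle is precisely this uniform construction of the fibres $D_c$ \emph{inside} $A$: one must select, definably and uniformly in $c$, a $(q-r)$\--dimensional piece of $A$ whose closure contains $c$, staying off $\overline C$, and then certify that the union has dimension exactly $q$ and that its $q$-pure component still accumulates onto $C$ up to dimension $<r$. This is where all three hypotheses are essential: definable Skolem functions provide the uniform choice, Dim5 controls the family, and the dp-minimal non-trivially valued field structure supplies the curve-selection/accumulation phenomenon that lets a single low-dimensional fibre accumulate at $c$. Verifying the dimension bookkeeping — that no extra dimension is created and that $q$-purity survives the passage to $\pc^{q}$ — is the technically delicate part, handled throughout by the dimension axioms Dim\ref{it:dim-image}--Dim\ref{it:dim-Sk}.
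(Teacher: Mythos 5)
Your outer induction on $r=\dim C$ and the reduction to the generic locus $W_r(C)$ are sound and broadly parallel to reductions the paper also makes, but the proof has a genuine gap exactly where you flag ``the main obstacle'': the uniform construction of the definable family $(D_c)_{c\in W_r(C)}$ of $(q-r)$\--dimensional transverse pieces accumulating at each $c$. This step is asserted, not proved. It rests on a ``curve-selection argument inside the $p$\--pure set $A$'' that is not among the axioms Dim1--Dim4, is not supplied by Dim5 (generic continuity of definable functions), and is not established for dp\--minimal expansions of non-trivially valued fields anywhere in the paper or its references. Even granting some form of curve selection pointwise, nothing in your argument makes the choice of $D_c$ \emph{uniformly definable in $c$} while guaranteeing both $\dim\bigl(\bigcup_c D_c\bigr)=q$ and, crucially, that after restricting to the dense definable subsets where Dim5 gives continuity and after passing to the $q$\--pure component, the closure still contains all of $W_r(C)$ up to a set of dimension $<r$. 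That last accumulation property is the delicate analytic content of the statement, and your sketch offers no mechanism to certify it. (There is also a smaller slip: you form $B_1=\pc^{q}\bigl(\overline{B_0}\cap X\bigr)$ with an undefined ambient $X$ and justify $B_1\subseteq A$ by ``$A$ is closed in $X$'', but the hypothesis only gives a definable $p$\--pure $A\subseteq K^m$, not a closed one, so taking closures can leave $A$.)

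For comparison, the paper avoids curve selection entirely. After a chain of reductions (to $A$ open in $K^m$, so $p=m$; to $\pi_J|_C$ a homeomorphism onto an open $Z\subseteq K^r$ with $\pi_J(A)=Z$; and to the existence of a continuous definable retraction $\rho=f\circ\pi_J$ of $A\cup C$ onto $C$), it treats only $q=m-1$ and descends by induction. The valuation is used to split $A$ into finitely many definable open pieces $A_k$ according to which coordinate of $a-\rho(a)$ is not the strictly closest, and $B$ is assembled from graphs of definable sections $\theta_k$ of the coordinate projections $\pi_k$ restricted to dense sets of continuity; these graphs are $(m-1)$\--pure because they are homeomorphic to open subsets of $K^{m-1}$. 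The accumulation of $\overline{B}$ onto $C$ is then verified by an explicit limit computation with the valuation, using the defining inequality of $A_k$ to control the forgotten coordinate. If you want to salvage your approach, you would need to either prove a uniform definable curve-selection statement in this setting or replace your $D_c$ by an explicit construction of this retraction-plus-sections type.
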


The catenarity of $\ldef(X)$, for every definable set $X\subseteq K^m$, follows
immediately. 

\begin{proof}
  We are going to simplify the problem several times, using repeatedly
  the obvious facts that: (i) every open subset of a $p$\--pure set is
  $p$\--pure, and so is its closure; (ii) the union of finitely many
  $p$\--pure sets is $p$\--pure, and; (iii) if $T\subseteq \overline{S}\subseteq K^m$
  then $\pi(T)\subseteq\pi(\overline{S})\overline{\pi(S)}$ for every coordinate
  projection $\pi:K^m\to K^k$. 

  {\it Step 1}. For every $I\subseteq\{1,\dots,m\}$ with $p$ elements let
  $\pi_I:(x_i)_{1\leq i\leq m}\mapsto(x_i)_{i\in I}$ be the corresponding coordinate
  projection. Let $A_I$ be the set of $a\in A$ such that $\pi_I$ induces
  by restriction a homeomorphism between a neighbourhood of $a$ in $A$
  and an open subset of $K^p$, and let $C_I=C\cap\overline{A_I}$. Each
  $A_I$ is $p$\--pure, and by Proposition~\ref{pr:Wk-dense} their
  union is dense in $A$, hence $C$ is the union of the $C_I$'s. So it
  suffices to find for each $I$ a $q$\--pure definable set $B_I\subseteq I$
  such that $C_I\subseteq\overline{B_I}$, and let $B$ be their union. This
  reduces to the case where $A=A_I$ and $C=C_I$  for some $I$. 

  {\it Step 2}. Observe that that $\pi_I(C)$ is contained in the closure
  of $\pi_I(A)$. By the previous step $\pi_I(A)$ is open in $K^p$,
  hence $p$\--pure. Assume that we can find a $q$\--pure subset $Y$ of
  $\pi_I(A)$ whose closure contains $\pi_I(C)$. Let $B=\pi_I^{-1}(Y)\cap A$,
  this is a $p$\--pure subset of $A$ (because the restriction of $\pi_I$
  to $A$ is a local homeomorphism) and $C\subseteq\overline{B}$. So it
  suffices to solve the problem for $\pi_I(A)$ and $\pi_I(C)$. With other
  words, we can assume that $A$ is an open subset of $K^m$, hence
  $p=m$. 
  
  {\it Step 3}. For every $J\subseteq\{1,\dots,m\}$ with $s\leq r$ elements, let $C_J$
  be the set of $c\in C$ such that $\pi_J$ (defined as in the first
  reduction) induces by restriction a homeomorphism between a
  neighbourhood of $c$ in $C$ and an open subset of $K^s$. If we can
  find for each $J$ a $p$\--pure definable set $B_J\subseteq A$ such that
  $C_J$ is contained in the closure of $B_J$, then we are done by
  letting $B$ be the union of the $B_J$'s. This (and a decreasing
  induction on $r$) reduces to the case where $C=C_J$ for some $J$,
  hence $\pi_J$ is a local homeomorphism from $C$ to an open subset of
  $K^r$. Reordering the coordinates if necessary we can then assume
  that $J=\{1,\dots,r\}$. 

  {\it Step 4}. Let $Z=\pi_J(C)$ and $X=\pi_J(A)$. We have
  $Z\subseteq\overline{X}$ hence the dimension of $Z\setminus X$ is $<r$ by
  (Dim\ref{it:dim-frontier}). So is the dimension of $\pi_J^{-1}(Z\setminus X)\cap
  C$ (because $\pi_J$ is now a local homeomorphism). Since $C$ is the
  union of $\pi_J^{-1}(Z\setminus X)\cap C$ and $\pi_J^{-1}(Z\cap X)\cap C$, by a
  straightforward induction on the dimension of $C$ this reduces to
  the case where $C=\pi_J^{-1}(Z\cap X)\cap C$, that is $Z\cap X=Z$, or
  equivalently $\pi_J(C)\subseteq\pi_J(A)$. 
  
  {\it Step 5}. Since $\pi_J$ is a local homeomorphism on $C$, over any
  point $z\in \pi_J(C)$ the fibers $C_z=\pi_J^{-1}(z)\cap C$ are discrete,
  hence finite by Proposition~1.1 in \cite{simo-wals-2018}. The same
  holds true in every elementary extension of $\cK$ so, by the
  model-theoretic compactness theorem, their cardinality must be
  uniformly bounded by some integer $N$. For every $k\leq N$ let $C_k$ be
  the set of $c\in C$ such that the fibers of $\pi_J$ over $\pi_J(c)$ has
  cardinality $k$. This is a finite partition of $C$ in definable set.
  It suffices to solve the problem separately for $A$ and each $C_k$,
  which reduces to the case where $C=C_k$ for some $k$. 

  {\it Step 6}. We can find definable Skolem functions $f_1,\dots,f_k$
  from $\pi_J(C)$ to $K^m$ such that for each $z\in\pi_J(C)$, the fiber
  $\pi_J^{-1}(z)\cap C=\{f_1(z),\dots,f_k(z)\}$. For each $l\leq k$ let
  $C_l=f_l(\pi_J(C))$. This is again a finite partition of $C$ in
  definable sets. So the problem boils down to the case where $C=C_l$
  for some $l$, that is $\pi_J$ induces a bijection from $C$ to
  $Z=\pi_J(C)$, and $f=f_1$ is the reciprocal bijection. After this
  reduction we cannot assume anymore that $Z=\pi_J(C)$ is open in $K^r$.
  However, the complement in $Z$ of the interior of $Z$ in $K^r$ has
  dimension $<r$ by (Dim\ref{it:dim-Sk}). By (Dim5) the set of
  discontinuities of $f$ also has dimension $<r$. Hence, by a
  straightforward induction on the dimension of $C$, we can reduce to
  the case where $Z$ is open, and $f:Z\to C$ and the restriction of
  $\pi_J$ are reciprocal homeomorphisms. 

  {\it Step 7}. One can easily check that $C$ is contained in the
  closure of $A'=\pi_J^{-1}(Z)\cap A$. The latter is open. This reduces to
  the case where $A=A'$, that is $\pi_J(A)=\pi_J(C)$. In particular, the
  restriction $\rho$ of $f\circ\pi_J$ to $A\cup C$ then defines a continuous
  retraction onto $C$ (that is $\rho$ is continuous on $A\cup C$ and
  $\rho(c)=c$ for every $c\in C$). 
  \smallskip

  Using this retraction we can now finish the proof. We do it when
  $q=m-1$, that is $q=p-1$, the result for smaller values of $q$
  following immediately by decreasing induction. 
  For every $k\in\{1,\dots,m\}$ and every $a=(a_1,\dots,a_m)\in A$ let
  $\rho_k(a)$ be the $k$\--th coordinate of $\rho(a)$, so that
  $\rho(a)=(\rho_1(a),\dots,\rho_m(a))$. Note that $\pi_J(a)=\pi_J(\rho(a))$ by
  construction, hence $\rho(a)=(a_1,\dots,a_r,\rho_{r+1}(a),\dots,\rho_m(a))$. For each
  $k\in\{r+1,\dots,m\}$ let 
  \begin{displaymath}
    A_k=\big\{a\in A\tq v\big(a_k-\rho_k(a)\big)\geq \min_{l\neq
    k}v\big(a_l-\rho_l(a)\big)\big\}.
  \end{displaymath}
  This is the set of points $a\in A$ such that $a_k$ is not strictly
  further from $\rho_k(a)$ than is $a$ from $\rho(a)$ (see
  Figure~\ref{fi:A-X1-X2}).
  Clearly $A_k$ is definable, open, and $A$ is the union of the
  $A_k$'s. In particular $C$ is contained in the union of the
  $\overline{A_k}$'s. 

  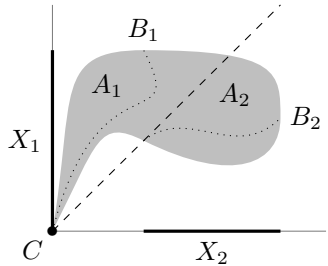
\begin{figure}[h]
    \begin{center}
        \begin{tikzpicture}[scale=.6]
          \begin{scope}
            \clip (0,0) .. controls +(1,2) and +(-1,.5) .. (2,2)
              .. controls +(2,-1) and +(0,-1) .. (5,2.5)
              .. controls +(0,1) and +(3,0) .. (2,4)
              .. controls +(-2,0) and +(.5,3) .. (0,0)
              -- cycle;
            \fill[color=lightgray] (0,0) rectangle (5,4);
            \draw[dotted] (0,0) .. controls +(1,4) and +(1,-2) .. (2,4);
            \draw[dotted] (2,2) .. controls +(1,.8) and +(-1,-1) .. (5,2.5);
          \end{scope}
          \draw[gray] (6,0) -- (0,0) -- (0,5);
          \draw[dashed] (0,0) node{\small$\bullet$} node[below left]{$C$}  --(5,5);
          \draw (4,3) node{$A_2$} 
            (1.2,3.2) node{$A_1$} 
                (2,4) node[above]{$B_1$}
              (5,2.5) node[right]{$B_2$};
          \draw[very thick] (0,0) -- (0,4) node[midway,left]{$X_1$};
          \draw[very thick] (2,0) -- (5,0) node[midway,below]{$X_2$};
        \end{tikzpicture}
      \caption{In $K^2$, the dashed line splits $A$ (in gray) in two
      parts $A_1$, $A_2$.}
      \label{fi:A-X1-X2}
    \end{center}
  \end{figure}
  
  For each $k\in\{r+1,\dots,m\}$ let $\pi_k:K^m\to K^{m-1}$ be the projection
  which forgets the $k$\--th coordinate. Let $\theta_k$ be a definable
  section of the restriction of $\pi_k$ to $A_k$ (given by definable
  Skolem functions). By (Dim5) there is a definable set $X_k\subseteq
  \pi_k(A_k)$ dense in $\pi_k(A_k)$ such that $\theta_k$ is continuous on $X_k$.
  Finally let $B_k=\theta_k(X_k)$ (the dotted lines in
  Figure~\ref{fi:A-X1-X2}). Recall that $A_k$ is open in $K^m$, hence
  so is $\pi_k(A_k)$ in $K^{m-1}$. In particular $\pi_k(A_k)$ is
  $(m-1)$\--pure, hence so is $X_k$. By construction, the restriction
  of $\theta_k$ to $X_k$ is a homeomorphism, so $B_k$ is $(m-1)$\--pure.
  Letting $B$ be the union of the $B_k$'s, it only remains to check
  that $C\subseteq\overline{B}$. 

  In order to do so, pick any $c=(c_1,\dots,c_m)\in C$. There is $k\in\{r+1,\dots,m\}$ such that
  $c\in\overline{A_k}$, hence $\pi_k(c)\in\overline{X_k}$. It suffices to
  prove that $\theta_k(x)$ tends to $c$ as $x$ tends to $\pi_k(c)$ in
  $X_k$, in order to conclude that
  $c\in\overline{B_k}$, and finally that $C\subseteq\overline{B}$. Let
  $\pi_{J,k}:K^{m-1}\to K^r$ be such that $\pi_J=\pi_{J,k}\circ\pi_k$. For every
  $x\in X_k$, let $a=(a_1,\dots,a_m)=\theta_k(x)$ and observe that $\pi_J(a)=\pi_{J,k}(x)$, so 
  \begin{equation}
    \underbrace{f\circ\pi_J(a)}_{=\rho(a)}=f\circ\pi_{J,k}(x)
    \xrightarrow[x\to\pi_k(c)]{}
    f\circ\pi_{J,k}\big(\pi_k(c)\big)=\underbrace{f\circ\pi_J(c)}_{=\rho(c)=c}.
    \label{eq:rho-a-c}
  \end{equation}
  Consequently $\pi_k(\rho(a))\xrightarrow[x\to\pi_k(c)]{}\pi_k(c)$, so
  \begin{displaymath}
    \pi_k(a)-\pi_k\big(\rho(a)\big)=x-\pi_k\big(\rho(a)\big)
    \xrightarrow[x\to\pi_k(c)]{}\pi_k(c)-\pi_k(c)=(0,\dots,0),
  \end{displaymath}
  that is
  \begin{equation}
    \min_{l\neq k}v\big(a_l-\rho_l(a)\big)\xrightarrow[x\to\pi_k(c)]{}+\infty.
    \label{eq:ak-rhok}
  \end{equation}
  We have $a=\theta_k(x)\in A_k$ so, by definition of $A_k$,
  \begin{equation}
    \min_{1\leq l\leq m}v\big(a_l-\rho_l(a)\big)=\min_{l\neq k}v\big(a_l-\rho_l(a)\big)
    \label{eq:lim-Ak}
  \end{equation}
  By(\ref{eq:ak-rhok}) and (\ref{eq:lim-Ak}), we get that $a-\rho(a)$
  tends to $(0,\dots,0)$ as $x$ tends to $\pi_k(c)$. So by (\ref{eq:rho-a-c}) 
  \begin{displaymath}
    \theta_k(x)=a=\rho(a)+\big(a-\rho(a)\big) \xrightarrow[x\to\pi_k(c)]{} c.
  \end{displaymath}
\end{proof}


\end{document}